\numberwithin{equation}{section}
\newtheorem{theorem}{Theorem}[section]
\newtheorem{proposition}[theorem]{Proposition}
\newtheorem{lemma}[theorem]{Lemma}
\newtheorem{corollary}[theorem]{Corollary}
\theoremstyle{definition}
\newtheorem{definition}[theorem]{Definition}
\theoremstyle{remark}
\newcommand{\R}{\mathbb{R}}
\newcommand{\Z}{\mathbb{Z}}
\newcommand{\eps}{\varepsilon}
\newcommand{\scriptE}{\mathcal{E}}
\newcommand{\scriptF}{\mathcal{F}}
\newcommand{\scriptJ}{\mathcal{J}}
\newcommand{\scriptK}{\mathcal{K}}
\newcommand{\scriptL}{\mathcal{L}}
\newcommand{\tnorm}{\textnormal}
\newcommand{\qtq}[1]{\quad\text{#1}\quad}
\newcommand{\ctc}[1]{\,\,\text{#1}\,\,}
\DeclareMathOperator*{\dist}{dist}
\begin{document}

\title[Uniform bounds for averages along degenerate curves]{Uniform bounds for convolution and restricted X-ray transforms along degenerate curves}
\author{Spyridon Dendrinos}
\address{School of Mathematical Sciences, University College Cork, Cork, Ireland} 
\email{sd@ucc.ie}
\author{Betsy Stovall}
\address{Department of Mathematics, University of Wisconsin, Madison, WI 53726}
\email{stovall@math.wisc.edu}

\begin{abstract}
We establish endpoint Lebesgue space bounds for convolution and restricted X-ray transforms along curves satisfying fairly minimal differentiability hypotheses, with affine and Euclidean arclengths.  We also explore the behavior of certain natural interpolants and extrapolants of the affine and Euclidean versions of these operators.
\end{abstract}

\maketitle

\section{Introduction}

This article deals with the basic problem of determining the precise amount of $L^p$-improving for certain weighted averaging operators associated to curves in $\R^d$. In the unweighted case, this problem has been studied by Tao and Wright in wide generality, and in \cite{TW}, they completely describe (except for boundary points) the set of $(p,q)$ for which these operators map $L^p$ boundedly into $L^q$, under certain smoothness hypotheses and in the presence of a cutoff.  This set of $(p,q)$ depends on the torsion (and appropriate generalizations thereof), but if instead the average is taken against an `affine arclength measure,' the effects of vanishing torsion are mitigated, and the $(p,q)$ region is larger.  In fact (excepting boundary points), the new region is essentially independent of the curves \cite{BSmult}.  

We are interested in the questions of whether the endpoint estimates hold, whether there is a natural way to relate the weighted and unweighted versions of these operators, and to what extent the regularity hypotheses in previous articles (often $C^\infty$) can be relaxed.  

Endpoint bounds have been established in a number of special cases.  A more extensive list of references is given in \cite{DLW, DSjfa}; we will focus here on the most recent results.   In \cite{GressIMRN}, Gressman proved that in the polynomial case of the Tao--Wright theorem, endpoint restricted weak type estimates hold, but left open the question of strong type bounds.  In the translation-invariant case, more tools are available, and correspondingly, more is known.  In \cite{DLW, oberlin-polynomial, BSjfa}, endpoint strong type estimates for convolution with affine arclength measure on polynomial curves were proved.  These estimates depend only on the dimension and polynomial degree and require no cutoff function.  In \cite{DSjfa}, an analogous result was proved for the restricted X-ray transform.  

For the low regularity case, much less is known.  We are primarily motivated by the recent \cite{OberlinJFA} and \cite{DMtams}.  In \cite{OberlinJFA}, Oberlin proved bounds along the sharp line for convolution with affine arclength measure along low-dimensional `simple' curves satisfying certain monotonicity and log-concavity hypotheses.  In particular, there exist infinitely flat curves satisfying these hypotheses.  This provides further motivation for the consideration of affine arclength measure, because in these cases there are simply no nontrivial estimates for the unweighted operators.  In \cite{DMtams}, the first author and M\"uller proved that restriction to certain $C^d$ perturbations of monomial curves with affine arclength measure satisfies the same range of $L^p \to L^q$ inequalities as restriction to nondegenerate curves.  

Our purpose here is to generalize, to the extent possible, the endpoint results mentioned above to more general classes of curves of low regularity. To address the question of the natural relationship between the weighted and unweighted operators, we show how, by a simple interpolation argument, `weaker' estimates for operators with `larger' weights (including the optimal estimates in the unweighted case) can be deduced from the optimal estimates for the affine arclength case.  Finally, motivated by similarities between restriction operators and generalized Radon transforms, we prove an analogue of the main result of \cite{DMtams} for convolution with affine arclength measure along monomial-like curves with only $d$ derivatives.

\section{Results and methods}

Let $I \subset \R$ be an interval and $\gamma \in C^d_{\rm{loc}}(I;\R^d)$; that is, $\gamma:I \to \R^d$ is a curve in $C^d(K)$ for every compact sub-interval $K \subseteq I$.  We define the torsion $L_\gamma$ and affine arclength measure $\lambda_\gamma\, dt$ by
$$
L_\gamma = \det(\gamma',\ldots,\gamma^{(d)}), \qquad \lambda_\gamma = |L_\gamma|^{\frac2{d(d+1)}}.
$$
Since $\lambda_{\gamma\circ\phi} = |\phi'| \lambda_\gamma \circ \phi$, $\lambda_\gamma\, dt$ is naturally interpreted as a measure on the image of $\gamma$.  The behavior of affine arclength measure under affine transformations of $\gamma$ (especially in contrast to Euclidean arclength measure) make it particularly well-suited to harmonic analysis.

We are primarily interested in the convolution operator
$$
T_\gamma f(x) = \int_I f(x-\gamma(t))\, \lambda_\gamma(t)\,dt, \qquad x \in \R^d,
$$
and the X-ray transform, 
$$
X f(x,y) = \int_{\R} f(s,x+sy) \, ds, \qquad (x,y) \in \R^{d+d}.
$$
The latter averages $f$ along each line parallel to $(1,y)$, and we restrict $y$ to lie along the image of $\gamma$:
$$
X_\gamma(t,x) := X(x,\gamma(t)).  
$$

It is known that, aside from the trivial case $L_\gamma \equiv 0$, the natural endpoint $L^p \to L^q$ bounds for these operators are
\begin{align}\label{E:convl endpt}
\|T_\gamma f\|_{L^{q_d}(\R^d)} \lesssim \|f\|_{L^{p_d}(\R^d)}, \qquad &(p_d,q_d) = (\tfrac{d+1}2,\tfrac{d(d+1)}{2(d-1)})\\ \label{E:X ray endpt}
\|X_\gamma f\|_{L^{s_d}(\R^{1+d};dx\,\lambda_\gamma dt)} \lesssim \|f\|_{L^{r_d}(\R^{1+d})}, \qquad &(r_d,s_d) = (\tfrac{(d+1)(d+2)}{d^2+d+2},\tfrac{d+2}d),
\end{align}
and the interest is in obtaining these bounds with implicit constants that are uniform over some large class of curves.  Such estimates have been established for polynomial curves of bounded degree \cite{DLW, DSjfa, oberlin-polynomial, BSjfa}; our primary goal is to relax that regularity assumption to the extent possible.  

It is clear, however, that some further restrictions must be made.  Indeed, simple examples show that for \eqref{E:convl endpt} or \eqref{E:X ray endpt} to hold, we must have
\begin{equation} \label{E:cvx hull}
\int_{I'}\lambda_\gamma(t)\, dt \lesssim |\rm{ch}(\gamma(I'))|^{\frac2{d(d+1)}}, \quad \text{for all intervals $I' \subseteq I$},
\end{equation}
where `$\rm{ch}$' indicates the convex hull.  For sufficiently small intervals $I'$ and finite type curves $\gamma$, both sides of this inequality are comparable, as discussed in \cite{Oberlin03}, but without these hypotheses on the interval and curve, this may fail.  Consider the examples
\begin{align*}
\gamma^1(t) &= (t,\sin(t^{-k}) \exp(-t^{-2})), \quad 0 < t \leq 1\\
\gamma^2(t) &= ((1+\exp(-t))\cos t , (1+\exp(-t))\sin t), \qquad t > 0\\
\gamma^3(t) &= (t, \sin t, \cos t), \qquad t \in \R,
\end{align*}
where $k$ is taken to be sufficiently large in the case of $\gamma^1$.  (This first example is due to Sj\"olin \cite{Sjolin}.)  Even though each $\gamma^j$ is an injective immersion, and $\gamma^2$ and $\gamma^3$ have nonvanishing torsion, none of these satisfy \eqref{E:cvx hull} globally.   

\subsection{Log-concave torsions}  

The examples $\gamma^2$ and $\gamma^3$ show that it is necessary to control the oscillation of lower dimensional projections of $\gamma$, and not just $L_\gamma$.  We define 
$$
L_{\gamma}^j = L_{(\gamma_1,\ldots,\gamma_j)}
$$
and
$$
B_\gamma^k = L_{\gamma} (L_{\gamma}^{d-k-1})^k (L_{\gamma}^{d-k})^{-(k+1)}, \qquad 1 \leq k \leq d; 
$$
here we are using the convention that $L_\gamma^0 = L_\gamma^{-1} = 1$, so, for example, $B_\gamma^d = L_\gamma$.  

Our most general results are restricted weak type analogues of \eqref{E:convl endpt} and \eqref{E:X ray endpt}, which we prove in Section~\ref{S:RWT}.  For simplicity, we summarize these in somewhat less than their full generality; the more general hypotheses in Section~\ref{S:RWT} will be analogous to those in \cite{OberlinJFA}.  We say that a function $f:I \to [0,\infty)$ is log-concave if $f(\theta t_1+(1-\theta)t_2) \geq f(t_1)^\theta f(t_2)^{1-\theta}$.  

\begin{theorem} \label{T:Intro RWT}
Let $\gamma\in C^d_{\rm{loc}}(I;\R^d)$.  Assume that the $B_\gamma^k$ are log-concave for $1 \leq k \leq d$.  Then we have the restricted weak type estimates
\begin{gather}\label{E:Intro RWT}
\langle T_\gamma \chi_E,\chi_F \rangle \leq C_d |E|^{\frac1{p_d}} |F|^{\frac1{q_d'}}, \qquad (p_d,q_d) = (\tfrac{d+1}2,\tfrac{d(d+1)}{2(d-1)})\\ \label{E:Intro X RWT}
\langle \lambda_\gamma(t)^{\frac1{s_d}} X_\gamma \chi_G,\chi_H \rangle \leq C_d |G|^{\frac1{r_d}}|H|^{\frac1{s_d'}}, \qquad (r_d,s_d) = (\tfrac{(d+1)(d+2)}{d^2+d+2},\tfrac{d+2}d),
\end{gather}
for all positive measure Borel sets $E,F \subseteq \R^d$, $G,H \subseteq \R^{1+d}$.  
\end{theorem}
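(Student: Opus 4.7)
The plan is to adapt the method of refinements of Christ, as developed by Tao--Wright and Gressman and most recently by Oberlin \cite{OberlinJFA}, with log-concavity of the $B_\gamma^k$ playing the role of the polynomial or monotonicity hypotheses exploited in previous work. We describe the argument for the convolution estimate \eqref{E:Intro RWT} in detail; the X-ray estimate \eqref{E:Intro X RWT} runs along parallel lines.

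Set $\alpha := \langle T_\gamma\chi_E,\chi_F\rangle$. Two standard pigeonhole refinements produce subsets $E'\subseteq E$ and $F'\subseteq F$, each retaining a fixed fraction of mass, on which every slice
\[
I_E(x) := \{t\in I : x - \gamma(t) \in E'\},\qquad I_F(y) := \{t\in I : y + \gamma(t) \in F'\}
\]
carries $\lambda_\gamma$-mass $\gtrsim \alpha/|F|$ for $x\in F'$ and $\gtrsim \alpha/|E|$ for $y\in E'$. Starting from a base point $x_0\in F'$ and successively choosing parameters from these slices in alternation, we obtain a $d$-parameter inflation map
\[
\Phi(t_1,\ldots,t_d) = x_0 + \sum_{j=1}^d (-1)^j\gamma(t_j)
\]
defined on a parameter region $\Omega \subseteq I^d$; the image $\Phi(\Omega)$ lies in either $E$ or $F$ according to the parity of $d$. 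The product of the slice bounds gives a lower bound for the $\lambda_\gamma^{\otimes d}$-mass of $\Omega$; the change-of-variables formula then reduces \eqref{E:Intro RWT} to a pointwise Jacobian bound of the form
\[
|\det D\Phi(t_1,\ldots,t_d)| \gtrsim \prod_{j=1}^d \lambda_\gamma(t_j) \cdot V(t_1,\ldots,t_d),
\]
where $V$ is a generalized Vandermonde in the $t_j$. A supplementary refinement of $\Omega$ ensures that the $t_j$'s are sufficiently separated for $V$ to contribute the correct Vandermonde mass, and the exponent arithmetic then delivers the endpoint scaling $\alpha \lesssim |E|^{1/p_d}|F|^{1/q_d'}$.

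The decisive step is the Jacobian bound. Writing $\det D\Phi = \det(\gamma'(t_1),\ldots,\gamma'(t_d))$ and performing iterated Taylor expansions of the columns---in the last coordinate first, then in the last two, and so on---the determinant factors as Vandermonde differences in the $t_j$ multiplied by pointwise values of the $B_\gamma^k$ at intermediate points. The definition of $B_\gamma^k$ as $L_\gamma(L_\gamma^{d-k-1})^k(L_\gamma^{d-k})^{-(k+1)}$ is engineered so that the $k$-th layer of this expansion produces $B_\gamma^k$ as its natural weight. Log-concavity of each $B_\gamma^k$ then allows its value at an intermediate point to be replaced by the geometric mean of its values at the endpoints of the refined slices, which are in turn comparable to the $\lambda_\gamma$-integrals appearing in the slice-mass bounds.

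The X-ray estimate \eqref{E:Intro X RWT} follows by the same inflation scheme, now with parameter space in $\R\times I$ encoding both the line direction $\gamma(t)$ and the position along the line; the associated Jacobian factors through the same $B_\gamma^k$. The principal obstacle throughout is the Jacobian estimate. Under the $C^d_{\mathrm{loc}}$ hypothesis only first-order Taylor remainders are available at each level, so combining the $d$ log-concavity hypotheses into a single clean pointwise bound on $\Omega$---and reconciling that bound with the measures of the refined slices---will require the band/refinement bookkeeping developed in \cite{TW} and adapted to the low-regularity setting in \cite{OberlinJFA}, now extended to arbitrary dimension and, for \eqref{E:Intro X RWT}, to the restricted X-ray transform.
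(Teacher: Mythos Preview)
Your high-level strategy---method of refinements, the inflation map $\Phi(t)=x_0+\sum(-1)^j\gamma(t_j)$, and a Jacobian lower bound driven by log-concavity of the $B_\gamma^k$---is correct and matches the paper. Two structural choices in the paper differ from what you sketch and make the argument substantially cleaner.

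First, the paper separates the log-concavity hypotheses by function. Log-concavity of $B_\gamma^k$ for $1\le k\le d-1$ is used \emph{only} to prove the geometric inequality \eqref{E:Intro gi} on all of $I^d$, independently of any sets $E,F$ or refinements (this is Proposition~\ref{P:Intro convl gi}). The proof is not by Taylor expansion but via a recursive integral identity expressing $\det(\gamma'(t_1),\ldots,\gamma'(t_d))$ through the quantities $A_\gamma^j=L_\gamma^{d-j-1}L_\gamma^{d-j+1}/(L_\gamma^{d-j})^2$, and an induction in which log-concavity of $\prod_{j\le k}(A_\gamma^j)^j$ (which equals $B_\gamma^k$) controls the integrand at intermediate points. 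Log-concavity of $B_\gamma^d=L_\gamma$ enters separately: it implies that $\lambda_\gamma\circ h$ is essentially concave, where $h$ inverts $t\mapsto\int^t\lambda_\gamma$, and this is what converts the $\lambda_\gamma$-mass slice bounds into genuine $t$-separation bounds $|t_j-t_i|\gtrsim \lambda_\gamma(t_i)^{-1/2}\lambda_\gamma(t_j)^{-1/2}\cdot(\text{slice mass})$. Your proposal merges these two roles.

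Second, the paper \emph{avoids} the band-structure bookkeeping you anticipate needing. At each refinement step an additional ``upper half'' excision is performed so that the surviving parameter set $\Omega$ satisfies $t_1<t_2<\cdots<t_d$. With this ordering, $\Phi$ is injective on $\Omega$, the Vandermonde $\prod_{i<j}(t_j-t_i)$ is bounded below directly by the separation estimates, and the exponent count closes with no further combinatorics. This ordering trick (credited to Gressman) is one of the main simplifications over \cite{ChCCC, OberlinJFA}, so your closing expectation that band structure ``will be required'' is precisely what the paper sidesteps.
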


Because $T_\gamma^*\chi_F(y) = T_\gamma \chi_{(-F)}(-x)$ this implies that $T_\gamma$ is also of restricted weak type $(q_d',p_d')$, so $T_\gamma$ is of strong type $(p,q)$ for all interpolants of $(p_d,q_d)$ and $(q_d',p_d')$.  

The proof consists of two parts.  In Section~\ref{S:RWT}, we prove Propositions~\ref{P:RWT} and~\ref{P:RWTXray}, which roughly state that if $L_\gamma = B_\gamma^d$ is log-concave and the geometric inequality 
\begin{equation} \label{E:Intro gi}
|\det(\gamma'(t_1),\ldots,\gamma'(t_d))| \geq c \prod_{j=1}^d |L_\gamma(t_j)|^{\frac1d}\prod_{i < j}|t_i-t_j|, \qquad (t_1,\ldots,t_d) \in I^d
\end{equation}
holds, then the restricted weak type estimates \eqref{E:Intro RWT} and \eqref{E:Intro X RWT} hold (with constants depending on $d$ and $c$).  We note that the left hand side of \eqref{E:Intro gi} is the Jacobian of $\sum_{j=1}^d \gamma(t_j)$, so this may be thought of as a stronger, more quantitative version of \eqref{E:cvx hull}.  

For the proof of the restricted weak type estimates, we use the method of refinements, but with a twist in the case of Proposition~\ref{P:RWT}:  by ordering certain parameters (this idea was suggested by Phil Gressman, personal communication), we avoid the complicated band structure argument of \cite{ChCCC}.  

The other half of the argument is the proof of the geometric inequality.  The following is a simplified but weaker version of Proposition~\ref{P:convl gi}, which is proved in Section~\ref{S:gi proofs}.  

\begin{proposition} \label{P:Intro convl gi}
Let $\gamma\in C^d_{\rm{loc}}(I;\R^d)$.  If $B_\gamma^k$ is log-concave for each $1 \leq k \leq d-1$, we may decompose $I = \bigcup_{j=1}^{C_d} I_j$, where each $I_j$ is an interval and $(t_1,\ldots,t_d) \in I_j^d$ implies that \eqref{E:Intro gi} holds, with a constant depending only on $d$.
\end{proposition}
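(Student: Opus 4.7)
The plan is to establish the pointwise inequality \eqref{E:Intro gi} on each piece of a partition of $I$ into $O_d(1)$ intervals.  By the alternation of $\det(\gamma'(t_1),\ldots,\gamma'(t_d))$ in its columns and the symmetry of $\prod_j|L_\gamma(t_j)|^{1/d}\prod_{i<j}|t_j-t_i|$ under permutations of the $t_j$, we may first reduce to the case $t_1 < t_2 < \cdots < t_d$.  The partition of $I$ itself is produced by splitting at the (finitely many) endpoints of the supports of the log-concave functions $|B_\gamma^k|$, $1\leq k\leq d-1$: a nonnegative log-concave function is supported on a single (possibly degenerate) interval and is strictly positive on the interior.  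Equivalently, we split at the zeros of each $L_\gamma^j$, of which there are only $O_d(1)$ many (with any full intervals on which some $L_\gamma^j$ vanishes identically being discarded, as these trivialize the right-hand side).  On each surviving subinterval $I_j$, each $L_\gamma^k$ has constant sign, and we may absorb the sign into our analysis.

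The heart of the proof will be an integral representation of the Jacobian
\[
J(t_1,\ldots,t_d) := \det(\gamma'(t_1),\ldots,\gamma'(t_d)).
\]
The idea is to successively apply column operations and the fundamental theorem of calculus: writing $\gamma^{(k)}(t_{i+1}) - \gamma^{(k)}(t_i) = \int_{t_i}^{t_{i+1}} \gamma^{(k+1)}(s)\,ds$ and using multilinearity of the determinant to pull the integrals outside, after $\binom{d}{2}$ steps one expects an expression of the shape
\[
J(t_1,\ldots,t_d) = \int_{D(t)} L_\gamma(s_d)\, K(s;t)\, ds,
\]
where $D(t)$ is a nested region with $s_k$ lying in a subinterval determined by the $t_j$ and $K$ is a determinantal kernel built from lower-order torsions $L_\gamma^j$ evaluated at interleaved parameters.  (The clean $d=2$ case $\det(\gamma'(t_1),\gamma'(t_2)) = \int_{t_1}^{t_2}\det(\gamma'(t_1),\gamma''(s))\,ds$ illustrates the first step.)  The precise form of this identity is the calculation on which the rest of the proof depends.

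The lower bound then comes from systematically applying log-concavity.  For a positive log-concave function $f$ on $[a,b]$, one has $f(c) \geq f(a)^{(b-c)/(b-a)} f(b)^{(c-a)/(b-a)}$ for $c \in [a,b]$, and hence $\int_a^b f \gtrsim (b-a)\sqrt{f(a)f(b)}$; iterating such estimates on $B_\gamma^k$ lets us exchange averages of the torsion-related quantities over subintervals $[t_i,t_{i+1}]$ for products of point-values $L_\gamma^j(t_l)$.  The Vandermonde factor $\prod_{i<j}(t_j-t_i)$ falls out of the measure of $D(t)$.  What must be produced is the symmetric geometric mean $\prod_{j=1}^d|L_\gamma(t_j)|^{1/d}$ — and this is exactly why the exponents in the definition $B_\gamma^k = L_\gamma(L_\gamma^{d-k-1})^k(L_\gamma^{d-k})^{-(k+1)}$ take the form they do: the log-concave hypotheses are pitched so that, after the iterated estimates are combined, the contributions from the lower-order torsions $L_\gamma^j$ cancel telescopically, leaving only the desired symmetric product.

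The principal obstacle will be this last combinatorial bookkeeping: matching the hypotheses on $B_\gamma^k$ for the various $k$ against the specific integrand that emerges from the multilinear expansion of $J$, and verifying that the log-concavity estimates combine in exactly the right way to yield the geometric-mean structure of the right-hand side.  I expect that partitioning the ordered simplex $\{t_1<\cdots<t_d\}$ further according to the relative sizes of the consecutive gaps $t_{i+1}-t_i$ may be needed to extract a favorable regime before applying log-concavity, analogously to the Whitney-type decompositions used in the polynomial setting of \cite{DLW, DSjfa}.
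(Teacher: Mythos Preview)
Your overall strategy---integral representation of $J$, log-concavity to pass from integrands to endpoint values, the Vandermonde factor from the domain volume, and a telescoping of the lower-order torsions---matches the paper's in spirit.  Two concrete ingredients are missing, though, and the Whitney-type speculation at the end is a red herring.

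First, the integral representation: rather than expanding all the way down to a single $\binom{d}{2}$-fold integral with $L_\gamma$ in the integrand, the paper uses a \emph{recursion}.  Setting $A_\gamma^k := L_\gamma^{d-k-1}L_\gamma^{d-k+1}/(L_\gamma^{d-k})^2$ and $J_\gamma^1 := A_\gamma^1$, one has the identity
\[
J_\gamma^k(t_1,\ldots,t_k) \;=\; \prod_{i=1}^k A_\gamma^k(t_i)\int_{t_1}^{t_2}\!\cdots\!\int_{t_{k-1}}^{t_k} J_\gamma^{k-1}(s_1,\ldots,s_{k-1})\,ds,
\]
with $J_\gamma^d = J$.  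The proof is then a clean induction on $k$: apply the inductive bound to $J_\gamma^{k-1}$ inside the integral, shrink each $s_i$ to a subinterval around the convex combination $t_i' = \tfrac{k-i}{k}t_i + \tfrac{i}{k}t_{i+1}$, and observe that the factor carried by the induction hypothesis is precisely $\prod_{j=1}^{k-1}(A_\gamma^j)^{j/(k-1)}$, a power of the hypothesized log-concave $B_\gamma^{k-1}$.  This recursion is what dissolves the ``combinatorial bookkeeping'' you flag as the principal obstacle; a fully expanded single integral would make that bookkeeping genuinely hard, and no decomposition of the simplex by gap sizes enters.

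Second, your partition of $I$ is incomplete.  Splitting at the support boundaries of the $B_\gamma^k$ (and there is no a priori reason the zero set of an $L_\gamma^j$ is finite under log-concavity alone) yields intervals on which the $L_\gamma^j$ have constant sign, but the inductive step also uses \emph{monotonicity} of $B_\gamma^{k-1}$.  On the shrunken box one has $s_i \geq t_i'$, and the paper first invokes $B_\gamma^{k-1}(s_i) \gtrsim B_\gamma^{k-1}(t_i')$ before applying log-concavity at the fixed point $t_i'$ to obtain $B_\gamma^{k-1}(t_i') \geq B_\gamma^{k-1}(t_i)^{(k-i)/k}B_\gamma^{k-1}(t_{i+1})^{i/k}$; the product over $i$ then telescopes to $\prod_j B_\gamma^{k-1}(t_j)^{(k-1)/k}$.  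Since a log-concave function is unimodal, one must additionally split at the maximum of each $B_\gamma^k$, $1 \leq k \leq d-1$, to secure monotonicity on each piece.  This (together with the support endpoints) is the source of the $C_d$ intervals in the statement, not the zero set alone.
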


We note that in two dimensions, if $\gamma \in C^3$, $\gamma_1' \neq 0$, and $L_\gamma \neq 0$, we may reparametrize $\gamma$ so that $B_\gamma^1 \equiv 1$, and hence \eqref{E:Intro gi} holds for the reparametrization.  In higher dimensions, it seems harder to determine whether a given curve has a parametrization that satisfies the geometric inequality.  

As an example, consider the monomial curve $\gamma(t) = (t^{a_1},\ldots,t^{a_d})$ with $a_i \in \R$, $a_1 < \cdots < a_d$.  Then 
$$
B_\gamma^k = c_a t^{a_{d-k+1}+\cdots+a_d - ka_{d-k}-\frac{k(k+1)}2}
$$
(we interpret $a_0$ as 0), which is log-concave if and only if the exponent is nonnegative.  In particular the hypothesis of Theorem~\ref{T:Intro RWT} holds for a reparametrization of $\gamma$ (by either $t\mapsto t^N$ or $t\mapsto t^{-N}$ for some large $N$).  In this way, we obtain endpoint restricted weak type estimates that are completely independent of the $a_i$.  By contrast, the geometric inequality \eqref{E:Intro gi} is not parametrization invariant.  By considering simple configurations (take one $t_i$ very small and the others moderate), we see that log-concavity of $B_\gamma^1$ is actually necessary for the geometric inequality to hold.  In two dimensions, this is both necessary and sufficient (by Proposition~\ref{P:Intro convl gi}).  In higher dimensions, it is clear that further inequalities relating the exponents should be necessary, but it is possible that a slightly weaker condition than log-concavity of the $B_\gamma^k$ suffices in the monomial case.  We mention this in part because it seems to be suggested in \cite{GressMonomial} (though no argument is given in the case of non-integer powers) that a geometric inequality essentially of the form \eqref{E:Intro gi} holds for monomial curves with sufficiently large but otherwise arbitrary real powers.  This is not the case.  

There is a close connection between generalized Radon transforms and Fourier restriction operators, and it would be interesting to see whether improved restriction estimates could be obtained for curves satisfying the hypotheses of Theorem~\ref{T:Intro RWT}.  In the case of simple curves $\gamma(t) = (t,\ldots,t^{d-1},\phi(t))$, such estimates were obtained in \cite{BOS2}.  In the general case, Proposition~\ref{P:Intro convl gi} and its proof may be useful, but current technology for Fourier restriction estimates requires additional geometric inequalities (see \cite{DMtams, DW, DrM2}).


\subsection{Interpolation}
In Section~\ref{S:interpolation}, we give a simple interpolation argument that can be used in conjunction with the restricted weak type results of Section~\ref{S:RWT}.  

In the special case $|L_\gamma(t)| \sim t^k$, for some real number $k \neq -\frac{d^2+d}2$, these give fractional integral analogues of \eqref{E:Intro RWT}.  In particular, when $k \geq 0$, we can recover the unweighted endpoint restricted weak type estimates (and hence strong type bounds on the interior of a line segment for the convolution operator).  We will obtain the corresponding strong type bounds in Section~\ref{S:ST}, but (as will be seen) interpolation yields a simpler argument and gives better bounds in the interior.  

In more general cases, some of the operators and estimates that arise in this way seem a little surprising (or did to the authors).

\subsection{Strong type bounds} \label{SS:ST}
In Section \ref{S:ST}, we turn our attention to the endpoint strong type bounds for the convolution operator $T_\gamma$ and related operators with different weights.  We are not yet able to address these at the same level of generality as in Theorem~\ref{T:Intro RWT}, so in the spirit of \cite{DMtams,DrM2} we consider the following family of monomial-like curves, which contains all curves of finite type.

Let $a_1 < a_2 < \cdots < a_d$ be nonzero (but possibly negative) real numbers and consider the monomial-like curve $\gamma(t) = (t^{a_1}\theta_1(t),\ldots,t^{a_d}\theta_d(t))$, $0 < t < T$.  Here $\theta_i \in C^d_{\rm{loc}}((0,T))$ and satisfies
\begin{equation} \label{E:thetas}
\lim_{t \searrow 0} \theta_i(t) = \theta_i(0) \in \R\setminus\{0\}, \qquad \lim_{t \searrow 0} t^m \theta_i^{(m)} (t) = 0, \quad 1 \leq i,m \leq d.
\end{equation}
We prove the following partial analogue of the main theorem in \cite{DMtams}.  

\begin{theorem}\label{T:convl st}
For each monomial-like curve $\gamma$ as above, there exists $\delta=\delta_\gamma > 0$ such that for each $0 < \theta \leq 1$, the operator
$$
T^\theta_\gamma f(x) = \int_0^\delta f(x-\gamma(t))\, \lambda_\gamma(t)^{\theta} \, \tfrac{dt}{t^{1-\theta}}
$$
satisfies
\begin{equation} \label{E:convl st}
\|T^\theta_\gamma f\|_{L^{q_d/\theta}(\R^d)}\lesssim  \|f\|_{L^{p_d/\theta}(\R^d)}, \qquad (p_d,q_d) = \Big(\tfrac{d+1}{2},\tfrac{d(d+1)}{2(d-1)}\Big),
\end{equation}
for all compactly supported Borel functions $f$.  If $\theta=1$, the implicit constant in \eqref{E:convl st} may be chosen to depend only on $d$ and an upper bound for $\tfrac{|a_1+\cdots+a_d|}{a_d-a_1}$, and if $0 < \theta < 1$, the implicit constant may be taken to depend on $d$, $\theta$, $|a_1+\cdots+a_d|$, and $a_d-a_1$.  
\end{theorem}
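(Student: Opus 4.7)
My plan rests on a dyadic decomposition of the integration interval $(0,\delta)$ together with an anisotropic rescaling compatible with the monomial-like structure of $\gamma$. Let $J_k = (2^{-k-1}\delta, 2^{-k}\delta]$, split $T^\theta_\gamma = \sum_{k\ge 0} T^\theta_{\gamma,k}$ with $T^\theta_{\gamma,k}$ the restriction of the integral to $J_k$, and introduce the diagonal dilation $A_s = \mathrm{diag}(s^{a_1},\dots,s^{a_d})$. Since $\gamma_j(t) = t^{a_j}\theta_j(t)$, setting $s_k = 2^{-k}\delta$ and substituting $t = s_k u$ yields $\gamma(s_k u) = A_{s_k}\gamma^{(k)}(u)$ with $\gamma^{(k)}(u) = (u^{a_j}\theta_j(s_k u))_{j=1}^d$; by \eqref{E:thetas}, these curves converge in $C^d([1/2,1];\R^d)$ as $k\to\infty$ to the nondegenerate limit $\gamma^{(\infty)}(u) = (\theta_j(0)u^{a_j})_j$.

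After conjugating each $T^\theta_{\gamma,k}$ by the dilation $x\mapsto A_{s_k}x$ and changing variable, a direct calculation using $|\det A_s| = s^{a_1+\cdots+a_d}$ and the asymptotic $\lambda_\gamma(t) \sim t^{2(a_1+\cdots+a_d)/(d(d+1))-1}$ produces the crucial scaling cancellation: the power of $s_k$ coming from the weight $\lambda_\gamma^\theta t^{\theta-1}$ matches the one from rescaling the $L^{p_d/\theta}$ and $L^{q_d/\theta}$ norms, because $\theta(\tfrac{1}{q_d}-\tfrac{1}{p_d})(a_1+\cdots+a_d) = -\tfrac{2\theta}{d(d+1)}(a_1+\cdots+a_d)$. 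It follows that $\|T^\theta_{\gamma,k}\|_{L^{p_d/\theta}\to L^{q_d/\theta}}$ equals that of a model operator
\[
S^\theta_k g(y) = \int_{1/2}^1 g(y - \gamma^{(k)}(u))\,\omega^{(k)}(u)\,du,
\]
with $\omega^{(k)}$ a weight uniformly bounded above and below in $k$. Since $\gamma^{(\infty)}$ is $C^d$ and nondegenerate on the compact interval $[1/2,1]$ and $\gamma^{(k)} \to \gamma^{(\infty)}$ in $C^d$, the classical endpoint $L^{p_d}\to L^{q_d}$ bound of Drury (stable under $C^d$-small perturbations), interpolated against the trivial $L^\infty\to L^\infty$ estimate, gives $\|S^\theta_k\|_{L^{p_d/\theta}\to L^{q_d/\theta}}\le C$ uniformly in $k$, with constants of the form required by the statement.

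The principal obstacle is the summation in $k$, as uniform per-piece bounds cannot be combined by the triangle inequality. My plan is to exploit that distinct pieces $T^\theta_{\gamma,k}f$ live at essentially disjoint anisotropic scales dictated by $A_{s_k}$. Concretely, an anisotropic Littlewood--Paley decomposition $f = \sum_\ell P_\ell f$ matched to $\{A_{s_k}\}_k$ will couple $T^\theta_{\gamma,k}$ predominantly with the $k$-th frequency block (the rest decaying by stationary phase on the curve $\gamma^{(\infty)}$), allowing summation via a vector-valued $\ell^{q_d/\theta}(L^{q_d/\theta})$ inequality. A second route, following \cite{BSjfa, DSjfa}, is first to establish a global restricted weak-type bound for $T^\theta_\gamma$ at $(p_d/\theta, q_d/\theta)$ by applying Theorem~\ref{T:Intro RWT} to a reparametrization of $\gamma$ for which the $B^k_\gamma$ are log-concave, together with the interpolation of Section~\ref{S:interpolation} to absorb the $t^{\theta-1}$ factor, and then to upgrade to strong type at the endpoint via a Christ-style combinatorial argument exploiting the monomial scaling. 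In either route, the dependence of the constants on $d$, on $\theta$ when $\theta<1$, and on $|a_1+\cdots+a_d|$ and $a_d-a_1$ (with only the ratio mattering when $\theta=1$) must be tracked through every step, and this accounting is what ultimately distinguishes the two parameter regimes in the statement.
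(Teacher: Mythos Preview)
Your dyadic decomposition and rescaling are essentially equivalent to the paper's exponential parametrization $\Gamma(t)=\gamma(e^{-t})$ followed by restriction to unit intervals $[j,j+1]$, and the scaling cancellation you compute is correct. The uniform per-piece bound is also fine. The genuine gap is the summation, where both of your proposed routes are underspecified in a way that hides the main difficulty.

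Route 1 (anisotropic Littlewood--Paley plus stationary phase) does not work as stated. The endpoint $L^{p_d}\to L^{q_d}$ bound for convolution along a curve is not a Fourier-analytic estimate in any direct sense; there is no square-function characterization of $L^{p_d/\theta}$ adapted to the anisotropic dilations that would let you pass from $\ell^{q_d/\theta}$ summability of the pieces to a strong-type bound for the sum. Stationary phase on the curve gives decay in frequency, but translating that into summable off-diagonal decay in $L^{p_d/\theta}\to L^{q_d/\theta}$ operator norm at the sharp endpoint is exactly the problem, not its solution.

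Route 2 is the right direction, but ``Christ-style combinatorial argument exploiting the monomial scaling'' is where all the content lies, and you have not identified the key ingredient. What the paper actually does is prove an \emph{exponential strengthening} of the geometric inequality in the exponential parametrization (Proposition~\ref{P:convl exp gi}):
\[
|J_\Gamma(t_1,\dots,t_d)| \gtrsim \prod_i |L_\Gamma(t_i)|^{1/d}\prod_{i<j}\bigl(|t_i-t_j|\,e^{c_d(a_d-a_1)|t_i-t_j|}\bigr).
\]
The extra factor $e^{c_d(a_d-a_1)|t_i-t_j|}$ is what drives the almost-orthogonality of different scales: feeding it through a two-scale refinement argument (Lemmas~\ref{L:lbE0}--\ref{L:lbF0}) yields lower bounds on $|E|$ and $|F|$ carrying a factor $e^{c_d(a_d-a_1)|j_1-j_2|}$, which in turn forces the refined sets $E^{(j,k)}$, $F^{(j,k)}$ from different scales $j$ to be nearly disjoint (Lemma~\ref{L:bound cardJ}). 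Without that exponential gain, the standard geometric inequality only gives uniform per-piece restricted weak-type bounds, and one cannot sum. Your proposal as written does not contain this idea, and the restricted weak-type bound from Theorem~\ref{T:Intro RWT} plus Section~\ref{S:interpolation} alone will not upgrade to strong type at the endpoint.
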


Since the operator $T^\theta_\gamma$ is essentially self-adjoint, it follows that \eqref{E:convl st} also holds with $(\tfrac{p_d}{\theta},\tfrac{q_d}{\theta})$ replaced by the dual pair $((\tfrac{q_d}{\theta})',(\tfrac{p_d}{\theta})')$, with the same bounds, so it also holds for all interpolants of these pairs.  We note that the limiting operator as $\theta \searrow 0$ is the one-sided (and hence very unbounded) Hilbert transform.  

There is a related result in \cite{GressMonomial}, but we obtain strong-type (rather than restricted weak type) bounds up to the natural affine arclength (rather than unweighted) endpoint for a larger class of curves.  

It should be possible to obtain an analogous theorem for the restricted X-ray transform via similar techniques, but the authors have not undertaken to verify this.

For the proof, motivated by \cite{DMtams}, we use the exponential parametrization $t \mapsto e^{-t}$; this also avoids the issues discussed above about the rate of increase of the $a_i$ and geometric inequalities.  Curiously, in this parametrization, the standard geometric inequality \eqref{E:Intro gi} is insufficient to obtain strong type bounds via our methods, so in Section~\ref{S:gi proofs}, we establish an exponential improvement, Proposition~\ref{P:convl exp gi}.  We prove Theorem~\ref{T:convl st} in Section~\ref{S:ST} using the method of refinements.  Again we order certain parameters to avoid the band structure argument.  Though the ordering is a bit more difficult for the strong type bounds than restricted weak type bounds, it still results in a substantially shorter argument than appeared in related articles (cf.\ \cite{BSlms, BSjfa}).  

A potentially interesting line of further questioning would be the precise dependence of the implicit constant in \eqref{E:convl st} on the $a_i$.  If $A:=\sum a_i = 0$, applying Theorem~\ref{T:convl st} to a reparametrization by $t\mapsto t^{1/(a_d-a_1)}$ implies that \eqref{E:convl st} actually holds with implicit constant equal to $C_{d,\theta}(a_d-a_1)^{\theta-1}$.  This seems to be essentially optimal because it scales in the right way under reparametrizations $t \mapsto t^k$.  

If $A \neq 0$, applying the methods of Section~\ref{S:RWT} to the exponential parametrization, we see that the restricted weak type version of \eqref{E:convl st} holds with a constant depending only on $d$, in the case $\theta=1$.  Using this and applying the interpolation in Section~\ref{S:RWT} to the reparametrization by $t \mapsto t^{1/|A|}$, the restricted weak type version of \eqref{E:convl st} holds with implicit constant $C_{d,\theta}|A|^{\theta-1}$, which again seems essentially optimal.  

In the case of strong type bounds, however, the authors have not been able to remove the dependence on $\tfrac{|A|}{a_d-a_1}$ or to show that it is necessary, even in the very simple case $\gamma_n(t) = (t^n,t^{n+1})$.\footnote{We would like to point out that the ratio $|A|/(a_d-a_1)$ is invariant under the above power-type reparametrizations.  We also note that this dependence does not seem to simply be an artifact of the exponential parametrization, and also seems to arise when one uses the methods of previous articles on the subject, such as \cite{DLW,oberlin-polynomial,BSjfa}.}  This sequence of examples is closely related to the curve $\gamma(t) = (e^{-\frac1t},te^{-\frac1t})$, for which the authors have not been able to prove or disprove strong type bounds.  It would be somewhat surprising if endpoint strong type bounds do not hold for the latter curve because, unlike all known counter-examples, it displays no oscillatory behavior.  

\subsection*{Acknowledgements}  
The first author was supported by Deutsche Forschungsgemeinschaft grant DE1517/2-1.  The second author was partially supported by NSF DMS 0902667 and 1266336.  

\subsection*{Notation}  
If $A$ and $B$ are two nonnegative quantities, we will write $A \lesssim B$ if $A \leq CB$ for some innocuous constant $C$, which will be allowed to change from line to line.  The meaning of `innocuous' will be allowed to change from proof to proof, but will always be clear from the context (or explicitly given).  By $A\sim B$ we mean $A \lesssim B \lesssim A$. 

\section{Restricted weak type bounds}
\label{S:RWT}

We now state our restricted weak type results in their full generality.  We are interested in curves $\gamma$ for which the torsion and Jacobian determinant $J_\gamma$ of $\Phi_\gamma(t_1,\ldots,t_d) := \sum_{j=1}^d (-1)^j \gamma(t_j)$ are related by the geometric inequality
\begin{equation} \label{E:convl GI}
|J_\gamma(t_1,\ldots,t_d)| \geq C_\gamma \prod_{j=1}^d |L_\gamma(t_j)|^\frac1d \prod_{1 \leq i < j \leq d}|t_j-t_i|.
\end{equation}
Our main results are conditioned on the validity of this geometric inequality, together with a log-concavity assumption on $\lambda_\gamma$.  

\begin{proposition}\label{P:RWT}
Let $I$ be an open interval and $\gamma:I \to \R^d$ a $d$-times continuously differentiable curve.  Assume that $L_\gamma$ satisfies the geometric inequality \eqref{E:convl GI} on $I^d$.  Assume further that $\lambda_\gamma(t) \sim f(t)$ for some log-concave function $f$ on $I$.  Then for all Borel sets $E,F \subseteq \R^d$ having finite measures,
\begin{equation} \label{E:RWT}
\langle T_\gamma \chi_E,\chi_F \rangle \lesssim |E|^{\frac1{p_d}}|F|^{\frac1{q_d'}}, \qquad (p_d,q_d') = \Big(\tfrac{d+1}{2},\tfrac{d(d+1)}{d^2-d+2}\Big).
\end{equation}
The implicit constant in \eqref{E:RWT} depends only on $d$, the constant $C_\gamma$ in \eqref{E:convl GI}, and the implicit constants bounding $\lambda_\gamma$ in terms of $f$.  
\end{proposition}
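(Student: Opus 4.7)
The plan is to apply the classical method of refinements of \cite{TW, ChCCC}, with the band-structure pigeonholing of the latter replaced by an ordering step based on the log-concavity of $\lambda_\gamma$. Write $\alpha := \langle T_\gamma\chi_E, \chi_F\rangle$; the goal is $\alpha \lesssim |E|^{1/p_d}|F|^{1/q_d'}$. Applying Chebyshev to $T_\gamma\chi_E$ and to $T_\gamma^*\chi_F$, I would first extract $F_1 \subseteq F$ and $E_1 \subseteq E$ with $|F_1| \sim |F|$, $|E_1| \sim |E|$, such that for every $y \in F_1$ the slice $E_y := \{t \in I : y - \gamma(t) \in E_1\}$ has $\int_{E_y}\lambda_\gamma\,dt \gtrsim \beta := c\alpha/|F|$, and dually for $x \in E_1$.

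\emph{Ordered selection.} For each $y \in F_1$, iteratively choose $t_1 < t_2 < \cdots < t_d$ in $E_y$ so that $\int_{E_y \cap (t_{j-1}, t_j)}\lambda_\gamma\,dt \sim \beta/d$. Here the hypothesis $\lambda_\gamma \sim f$ with $f$ log-concave is essential: $f$ has at most one mode on $I$, so after a single harmless pigeonhole we may work on the monotone side of $f$ carrying $\gtrsim \beta$ of $\int_{E_y}f\,dt$. On that monotone piece the equal-mass prescription couples heights to widths via $\lambda_\gamma(t_j)(t_j - t_{j-1}) \sim \beta/d$, so a single pigeonhole over the scale of $\lambda_\gamma(t_1)$ then suffices, in contrast to the cascade of $d$ dyadic pigeonholes over $\lambda_\gamma(t_1), \ldots, \lambda_\gamma(t_d)$ implicit in the band-structure argument of \cite{ChCCC}.

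\emph{Change of variables and the geometric inequality.} With ordered chains in hand, I would run the standard iteration alternating refinements on $E_1$ and $F_1$ to relate the $\lambda_\gamma$-weighted chain count to the volumes of $E$ and $F$. The decisive change of variables uses the map $\Phi_\gamma(t_1, \ldots, t_d) = \sum_j (-1)^{j+1}\gamma(t_j)$, whose Jacobian $J_\gamma$ is bounded below by \eqref{E:convl GI}. Combined with $\lambda_\gamma = |L_\gamma|^{2/(d(d+1))}$, the factors of $|L_\gamma|$ cancel cleanly and what survives is a Vandermonde-type product $\prod_{i<j}(t_j - t_i)$ paired against the widths from the ordered selection; the accounting yields the sharp exponent pair $(p_d, q_d')$.

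The main obstacle is the ordered selection, which is precisely what the twist is designed to overcome. Without log-concavity the values $\lambda_\gamma(t_j)$ along a chain can straddle many dyadic scales, forcing the intricate band-structure argument of \cite{ChCCC} to enumerate the possibilities; log-concavity rules this out because $\lambda_\gamma$ is unimodal on $I$ and therefore, after the pigeonhole to a monotone side, rigidly coupled to the widths along any ordered chain. A secondary, routine point is to verify that the mass-balanced $t_j$ are also separated enough for \eqref{E:convl GI} to be informative, which is arranged by allowing mild implicit constants in the selection step.
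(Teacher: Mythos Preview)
Your high-level strategy matches the paper's---use an ordering of the $t_i$ together with log-concavity of $\lambda_\gamma$ to avoid Christ's band-structure argument---but the execution in the ``Ordered selection'' paragraph has a genuine gap.  You describe choosing $t_1<\cdots<t_d$ \emph{all from the single fiber} $E_y=\{t:y-\gamma(t)\in E_1\}$.  But the map $\Phi_\gamma(t_1,\ldots,t_d)=\sum_j(-1)^{j+1}\gamma(t_j)$ only lands in $E$ or $F$ when the $t_i$ are produced by the \emph{alternating} iteration: $t_1$ from an $F$-fiber, $t_2$ from an $E$-fiber at the new basepoint $y-\gamma(t_1)$, $t_3$ from an $F$-fiber at $y-\gamma(t_1)+\gamma(t_2)$, and so on.  If all $t_j$ lie in $E_y$, the image $y+\Phi_\gamma(t)$ has no reason to belong to either set, and the change of variables produces no information about $|E|$ or $|F|$.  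Your subsequent sentence ``run the standard iteration alternating refinements'' does not repair this, because you have not explained how the ordering $t_1<\cdots<t_d$ survives once the $t_i$ live in \emph{different} fibers over different basepoints.

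The paper resolves this by building the ordering directly into the refinement: at each step $k$ one first excises (as usual) points whose fiber is too thin, and then \emph{additionally} excises those $(x,t)$ for which too little of the fiber lies in $(t,1)$.  This second excision forces $t_{k+1}>t_k$ at the next step while preserving the mass lower bound.  Log-concavity is then used \emph{after} the refinement, not during the selection: from $\int_{t_{i-1}}^{t_i}\lambda_\gamma\gtrsim\alpha$ one deduces $|t_i-t_{i-1}|\gtrsim\alpha\,\lambda_\gamma(t_{i-1})^{-1/2}\lambda_\gamma(t_i)^{-1/2}$ via a concavity lemma for $\lambda_\gamma\circ h$ (where $h$ inverts the $\lambda_\gamma$-antiderivative).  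Your claimed coupling $\lambda_\gamma(t_j)(t_j-t_{j-1})\sim\beta/d$ is too strong---monotonicity alone gives only $\beta\lambda_\gamma(t_j)^{-1}\lesssim t_j-t_{j-1}\lesssim\beta\lambda_\gamma(t_{j-1})^{-1}$, and the ratio $\lambda_\gamma(t_j)/\lambda_\gamma(t_{j-1})$ is not controlled without the concavity argument.
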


\begin{proposition}\label{P:RWTXray}
Suppose $\gamma: I \to \R^d$ satisfies the same hypotheses as in Proposition~\ref{P:RWT}.  Then for all finite measure Borel sets $E,F \subseteq \R^{1+d}$,
\begin{equation} \label{E:RWTXray}
\langle \lambda_\gamma(t)^{\frac1{q_d}} X_\gamma \chi_E,\chi_F \rangle \lesssim |E|^{\frac1{p_d}}|F|^{\frac1{q_d'}}, \qquad (p_d,q_d') = \Big(\tfrac{(d+1)(d+2)}{d^2+d+2},\tfrac{d+2}{2}\Big).
\end{equation}
The implicit constant in \eqref{E:RWTXray} has the same dependence as that in \eqref{E:RWT}.  
\end{proposition}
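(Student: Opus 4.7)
The plan is to adapt the method of refinements used in Proposition~\ref{P:RWT}, accounting for the extra line variable $s$ in the X-ray geometry. Set $\alpha := \langle \lambda_\gamma^{1/q_d} X_\gamma \chi_E, \chi_F \rangle$. Standard dyadic pigeonholing in the value of $\lambda_\gamma$, in $\lambda_\gamma^{1/q_d} X_\gamma \chi_E$ on $F$, and in $X_\gamma^*[\lambda_\gamma^{1/q_d}\chi_F]$ on $E$ produces refined subsets $F^* \subseteq F$ and $E^* \subseteq E$ of comparable measure on which $\lambda_\gamma \sim \mu$ is essentially constant and both pointwise quantities are essentially equal to their averages, at the cost of logarithmic factors absorbable in $\alpha$.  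The consequences are: for $(t,x) \in F^*$, the fiber $\{s : (s,x+s\gamma(t)) \in E^*\}$ has Lebesgue measure $\gtrsim \alpha/(|F|\mu^{1/q_d})$; dually, for $(s,y) \in E^*$, the fiber $\{t : (t,y-s\gamma(t)) \in F^*\}$ has $\lambda_\gamma^{1/q_d}$-weighted measure $\gtrsim \alpha/|E|$.

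Next, I iterate the flow.  Starting from $(t_0, x_0) \in F^*$, for $j = 1, \ldots, d$ I successively pick $s_j$ with $(s_j, x_{j-1}+s_j\gamma(t_{j-1})) \in E^*$ and then $t_j$ with $(t_j, x_j) \in F^*$, where $x_j := x_{j-1} + s_j(\gamma(t_{j-1}) - \gamma(t_j))$.  Using log-concavity of $f \sim \lambda_\gamma$, I localize to a sub-interval on which $\lambda_\gamma$ is essentially monotone, and then apply the Gressman ordering trick of Proposition~\ref{P:RWT} to restrict to tuples with $t_0 < t_1 < \cdots < t_d$, losing only a factor $(d+1)!$; the line variables $s_j$ require no ordering.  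Since $L_\gamma$ is continuous and nonvanishing on $I$ (by positivity of $f$), it has constant sign on $I$, which I record for the Jacobian argument.

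For the change of variables, I hold $(t_0, \ldots, t_d)$ fixed and view $x_d - x_0 = \sum_{j=1}^d s_j (\gamma(t_{j-1}) - \gamma(t_j))$ as a linear map in $(s_1,\ldots,s_d)$.  Its Jacobian equals, after an elementary column operation, $|\det(\gamma(t_j)-\gamma(t_0))_{j=1}^d|$.  Writing $\gamma(t_j)-\gamma(t_0) = \int_{t_0}^{t_j}\gamma'(u_j)\,du_j$ and expanding by multilinearity,
\[
\det(\gamma(t_j)-\gamma(t_0))_{j=1}^d = \int_{\prod_j [t_0,t_j]} \det(\gamma'(u_1),\ldots,\gamma'(u_d))\,du.
\]
Splitting the box into sub-boxes by the ordering of the $u_j$ and using the antisymmetry of $\det$ to cancel contributions from misordered sub-boxes, the integral reduces to one over the interleaved simplex $\{t_0 < u_1 < t_1 < u_2 < \cdots < u_d < t_d\}$, on which $\det(\gamma'(u_1),\ldots,\gamma'(u_d))$ has constant sign.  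Applying \eqref{E:convl GI} pointwise and log-concavity of $|L_\gamma|^{1/d}$ to transfer values on $(t_{j-1},t_j)$ to the endpoints, a direct computation of $\int \prod_{i<j}|u_i-u_j|\,du$ over the simplex yields the Jacobian lower bound $\gtrsim \prod_{j=1}^d |L_\gamma(t_j)|^{1/d} \prod_{i<j}|t_j - t_i|$.  Combining this with the fiber lower bounds from the refinements and using $(t_d,x_d) \in F$ to bound the parameter-space measure above by a constant multiple of $|F|$ times the $s_j,t_j$ integrals, a bookkeeping check matches the exponents $(p_d, q_d') = ((d+1)(d+2)/(d^2+d+2),(d+2)/2)$ and gives \eqref{E:RWTXray}.

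The hardest step is the reduction to the interleaved simplex together with the Jacobian lower bound: the antisymmetry cancellation is transparent for $d = 2$ but requires an inductive argument for higher $d$, and the transfer from the geometric inequality's averaged $\prod|L_\gamma(u_j)|^{1/d}$ to the pointwise $\prod|L_\gamma(t_j)|^{1/d}$ is exactly where log-concavity is essential.  The remainder of the argument parallels Proposition~\ref{P:RWT}, modulo the additional bookkeeping imposed by the line variables $s_j$.
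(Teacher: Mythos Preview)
Your approach differs structurally from the paper's and, as written, has two gaps that prevent it from closing.

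First, the dyadic pigeonholing in the value of $\lambda_\gamma$ is not innocuous at the endpoint.  Restricting to $\{t : \lambda_\gamma(t)\sim\mu\}$ splits the pairing into a sum over dyadic levels $\mu$, and the number of such levels need not be bounded (for instance when $\lambda_\gamma\to 0$ at an endpoint of $I$).  For a restricted weak type inequality there is no interpolation room to absorb this loss, and ``logarithmic factors absorbable in $\alpha$'' has no clear meaning here.  The paper never pigeonholes $\lambda_\gamma$: it carries the full weight through the refinements and uses Lemma~\ref{L:foh} (concavity of $\lambda_\gamma\circ h$, where $h$ inverts $t\mapsto\int_0^t\lambda_\gamma$) to compare values of $\lambda_\gamma$ at different parameter points.

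Second, and more seriously, your iteration has the wrong length.  You flow $2d$ times (alternating $s_j,t_j$ for $j=1,\ldots,d$), producing a $2d$-dimensional parameter set, while the target $F\subset\R^{1+d}$ is $(d+1)$-dimensional.  Your remedy---hold $(t_0,\ldots,t_d)$ fixed and change variables in $(s_1,\ldots,s_d)$---does not work as stated: in the refinement tower each $t_j$ is chosen \emph{after} $s_j$, so fixing $t_1,\ldots,t_{d-1}$ a priori does not leave a provably large set of admissible $(s_1,\ldots,s_d)$.  The Fubini-type reversal this would require is precisely the difficulty the method of refinements is built to avoid.  The paper instead iterates exactly $d+1$ times, alternating $s$ and $t$ so that roughly $(d+1)/2$ of each appear (the maps $\Phi^{d+1},\Psi^{d+1}$ and Lemma~\ref{L:param setXray}).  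The relevant Jacobian is therefore \emph{not} $|\det(\gamma(t_j)-\gamma(t_0))_{j=1}^d|$; it is the one in Proposition~\ref{P:jacobian bounds}, which contains factors $|s_{i+1}-s_i|$ as well as $|t_j-t_i|$.  Consequently your claim that ``the line variables $s_j$ require no ordering'' is also off: in the paper's setup the $s_j$ are ordered both to supply those separation factors and to guarantee almost-injectivity (Proposition~\ref{P:one to one}).

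Your reduction of $\det(\gamma(t_j)-\gamma(t_0))_{j=1}^d$ to an integral of $J_\gamma$ over the interleaved simplex $\{t_0<u_1<t_1<\cdots<u_d<t_d\}$ is correct and pleasant, but it is a lower bound for the wrong Jacobian for this proposition.
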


This raises the question of when the geometric inequality \eqref{E:convl GI} holds.  It is known in the case when $\gamma$ is a polynomial (\cite{DW}), when $\gamma$ is monomial-like (\cite{DMtams}), and for simple curves (i.e.\ those of the form $(t,\ldots,t^{d-1},\phi^{(d)}(t)))$ satisfying certain monotonicity and almost log-concavity hypotheses (\cite{BOS2, OberlinJFA}).  We will show that it holds for much more general curves, under hypotheses analogous to the latter case.  To state our result, we need a couple of definitions.

\begin{definition} \label{D: almost monotone}
Given a positive constant $C$, we call a non-negative function $f:I \subseteq \R \to\R$ {\it $C$-almost increasing} if $f(t_1) \leq C f(t_2)$ whenever $t_1 \leq t_2$.  We define \textit{$C$-almost decreasing} analogously and also say that a function is {\it $C$-almost monotone} if it is either $C$-almost increasing or $C$-almost decreasing.
\end{definition}

Following Oberlin \cite{OberlinJFA}, we also make the following definition.

\begin{definition} \label{D: almost log-concave}
Given a positive constant $M$, we call a function $f:\R\to\R$ {\it M-almost log-concave} if
\[
M \Big|f\Big(\frac{t_1+t_2}{2}\Big)\Big|\ge  |f(t_1)|^{1/2} |f(t_2)|^{1/2},
\]
for all $t_1, t_2 \in I$.
\end{definition}

The following proposition, which will be used in the proof of Propositions \ref{P:RWT} and \ref{P:RWTXray}, will be proved in Section~\ref{S:gi proofs}. Such geometric inequalities have been key ingredients in most of the recent proofs of endpoint estimates \cite{DSjfa, OberlinJFA, BSjfa}, because a good lower bound for the Jacobian of the mapping $\Phi$, which arises from an iteration procedure, is a central feature of the method of refinements, which originated in Christ \cite{ChCCC}. These geometric inequalities were first proved in the context of Fourier restriction to curves (see \cite{DMtams, DW, DrM2}). The curves $\gamma$ considered here are direct generalizations of the curves considered by Oberlin \cite{OberlinJFA}. However, our proof resembles more the one in \cite{DMtams}.

\begin{proposition} \label{P:convl gi}
Let $I$ be an open interval and $\gamma:I \to \R^d$ a $d$-times continuously differentiable curve.  Assume that the $L_\gamma^j$, $1 \leq j \leq d$, never vanish on $I$ and that the functions 
$$
L_{\gamma} (L_\gamma^{d-k-1})^k (L_\gamma^{d-k})^{-k-1}, \quad 1\le k \le d-1,
$$
are $C$-almost monotone and $M$-almost log-concave for some $C,M>0$.  Then, for all $(t_1,\ldots,t_d) \in I^d$,
\begin{equation} \label{E:convl gi}
|J_\gamma(t_1,\ldots,t_d)| \gtrsim \prod_{j=1}^d|L_\gamma(t_j)|^{\frac1d} \prod_{1 \leq i < j \leq d}|t_j-t_i|.
\end{equation}
The implicit constants in \eqref{E:convl gi} only depend on $C$, $M$, and $d$.    
\end{proposition}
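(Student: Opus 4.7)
The plan is to induct on $d$. The base case $d=1$ is the trivial identity $|J_\gamma(t_1)|=|\gamma_1'(t_1)|=|L_\gamma(t_1)|$. For the inductive step, by the symmetry of \eqref{E:convl gi} and continuity we may assume $t_1<\cdots<t_d$ and that all $L_\gamma^j$ are positive on $I$; the Jacobian equals $\pm\det(\gamma'(t_1),\ldots,\gamma'(t_d))$.

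The first tool is the integral representation
\[
\det(\gamma'(t_1),\ldots,\gamma'(t_d)) = \Big(\prod_{1\le i<j\le d}(t_j-t_i)\Big)\int_\Omega \det\bigl(\gamma^{(1)}(v_1),\ldots,\gamma^{(d)}(v_d)\bigr)\,d\mu(\vec v),
\]
where $\mu$ is a probability measure supported in $[t_1,t_d]^d$. This can be obtained by iterating the Hermite-Genocchi divided-differences formula: observing that $F(s):=\det(\gamma'(t_1),\ldots,\gamma'(t_{d-1}),\gamma'(s))$ vanishes at $t_1,\ldots,t_{d-1}$, one obtains $F(t_d)=\prod_{j<d}(t_d-t_j)\int F^{(d-1)}(u)\,d\nu(u)$ with $F^{(d-1)}(u)=\det(\gamma'(t_1),\ldots,\gamma'(t_{d-1}),\gamma^{(d)}(u))$; iterating on the remaining columns $\gamma'(t_1),\ldots,\gamma'(t_{d-1})$ peels off the full Vandermonde factor. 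Thus \eqref{E:convl gi} reduces to the pointwise mixed-torsion bound
\[
\bigl|\det(\gamma^{(1)}(v_1),\ldots,\gamma^{(d)}(v_d))\bigr| \gtrsim \prod_{j=1}^d |L_\gamma(t_j)|^{1/d},\qquad v_k\in[t_1,t_d],
\]
with constant depending only on $d$, $C$, $M$.

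To prove this pointwise bound I would reparametrize so that $\gamma_1(t)=t$ (allowed since $L_\gamma^1\neq 0$), which reduces the $d\times d$ mixed determinant to the $(d-1)\times(d-1)$ determinant $\det(\gamma_j^{(k)}(v_k))_{j,k=2}^d$. Peeling one column at a time, in decreasing order of $k$, and repeatedly row-reducing using the non-vanishing of the $L_\gamma^j$, extracts successive factors of $L_\gamma^{j}(\xi_j)/L_\gamma^{j-1}(\xi_j')$ at intermediate points $\xi_j,\xi_j'\in[t_1,t_d]$ supplied by mean-value theorem applications. Almost-monotonicity of the $B_\gamma^k$ permits replacing these evaluation points by a common $v_*\in[t_1,t_d]$, the resulting distortion being absorbed into the almost-log-concavity constant $M$; the surviving telescoping product equals $L_\gamma(v_*)$.

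Finally one must redistribute the single value $L_\gamma(v_*)$ as $\prod_j |L_\gamma(t_j)|^{1/d}$. Since no hypothesis on $L_\gamma = B_\gamma^d$ itself is available, this must be accomplished by writing $L_\gamma$ as a product of the $B_\gamma^k$ for $k\le d-1$ and the $L_\gamma^j$ (for instance, $L_\gamma = B_\gamma^{d-1}\,(L_\gamma^1)^d$, with analogous identities for other factorings), and exploiting that the $L_\gamma^j$ contributions telescope when one takes the geometric mean across $t_1,\ldots,t_d$, while the $B_\gamma^k$ contributions are handled by almost-log-concavity on the interval $[t_1,t_d]$. The main obstacle will be to organize the peeling and redistribution so that one never needs to interpolate $L_\gamma$ itself across $[t_1,t_d]$, only the $B_\gamma^k$ for $k\le d-1$; careful book-keeping of the order in which derivatives are peeled off and of the powers with which each $L_\gamma^j$ and $B_\gamma^k$ enters the final factorization should close the induction with constants depending only on $d$, $C$, and $M$.
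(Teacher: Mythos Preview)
Your Hermite--Genocchi reduction to the pointwise bound
\[
\bigl|\det\bigl(\gamma^{(1)}(v_1),\ldots,\gamma^{(d)}(v_d)\bigr)\bigr|\gtrsim\prod_{j=1}^d|L_\gamma(t_j)|^{1/d},\qquad v_k\in[t_1,t_d],
\]
is false, and the approach cannot be repaired along the lines you sketch.  Take $\gamma(t)=(t,t^2,t^4)$ on $(0,\infty)$: here $L_\gamma^1=1$, $L_\gamma^2=2$, $L_\gamma=48t$, so all the $B_\gamma^k$ are powers of $t$ and the hypotheses hold with $C=M=1$.  The mixed determinant is $\det(\gamma'(v_1),\gamma''(v_2),\gamma'''(v_3))=48v_3$, while $\prod_j|L_\gamma(t_j)|^{1/3}=48(t_1t_2t_3)^{1/3}$.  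With $v_3=t_1$ (which lies in the support of your measure $\mu$) and $t_1\ll t_2\sim t_3$, the claimed pointwise inequality fails by an arbitrarily large factor.  The integral of $48v_3$ against $\mu$ does dominate $48(t_1t_2t_3)^{1/3}$ (this is AM--GM), but that is an averaged statement, not a pointwise one; your peeling/row-reduction argument provides no mechanism for exploiting the averaging.

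Worse, the integrand need not even have a fixed sign on the support of $\mu$.  For $\gamma(t)=(t,t^3,t^4)$ on $(0,\infty)$ one has $L_\gamma^2=6t$, $L_\gamma=72t^2$, and again the hypotheses hold; but now $\det(\gamma'(v_1),\gamma''(v_2),\gamma'''(v_3))=72v_2(2v_3-v_2)$, which is negative whenever $v_2>2v_3$.  Since your iterated divided-difference construction gives $v_2\in[t_1,t_2]$ and $v_3\in[t_1,t_3]$ independently, the corner $v_2\approx t_2$, $v_3\approx t_1$ is in the support, and for $t_2>2t_1$ the integrand is negative there.  So you face genuine cancellation, and no pointwise or row-reduction argument can recover the lower bound.

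The paper avoids both issues by using a \emph{different} integral recursion (from \cite{spyrosthesis,DW}):
\[
J_\gamma^k(t_1,\ldots,t_k)=\prod_{i=1}^k A_\gamma^k(t_i)\int_{t_1}^{t_2}\cdots\int_{t_{k-1}}^{t_k}J_\gamma^{k-1}(s_1,\ldots,s_{k-1})\,ds,
\]
with $A_\gamma^k=L_\gamma^{d-k-1}L_\gamma^{d-k+1}/(L_\gamma^{d-k})^2$.  For $t_1<\cdots<t_k$ the integration forces $s_1\le\cdots\le s_{k-1}$, so the integrand is positive throughout.  The induction hypothesis is then a lower bound on $J_\gamma^{k-1}$ in terms of $\prod_i\prod_{j\le k-1}|A_\gamma^j(s_i)|^{j/(k-1)}$ --- precisely the $(k-1)$-th root of the hypothesis quantity $B_\gamma^{k-1}=\prod_{j\le k-1}(A_\gamma^j)^j$ --- and the almost-monotonicity/log-concavity of $B_\gamma^{k-1}$ is exactly what is needed to push these values from the $s_i$ out to the $t_i$.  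The redistribution you struggle with at the end (replacing a single $L_\gamma(v_*)$ by the geometric mean $\prod_j|L_\gamma(t_j)|^{1/d}$) is thus never needed: the recursion is set up so that the right geometric mean emerges automatically at each stage.
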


The hypotheses on the functions $L_\gamma (L_\gamma^{d-k-1})^k (L_\gamma^{d-k})^{-k-1}$, $1\le k \le d-1$, are a direct generalization of the conditions in \cite{BOS2, OberlinJFA} for simple curves of the form $(t,t^2,\ldots,t^{d-1},\phi(t))$. For these curves, all the $L_\gamma^k$, $1\le k\le d-1$, are identically constant and $L_\gamma$ is a constant multiple of $\phi^{(d)}$. In that case, our hypotheses amount to a condition on the $d$-th derivative of $\phi$, not any lower derivatives, so they are slightly weaker than the hypotheses in \cite{BOS2, OberlinJFA}. For non-simple curves, easy examples in low dimensions (such as $\gamma(t) = (\cos t, \sin t)$ and $\gamma(t) = (\cos t, \sin t , t)$) show that conditions only on $L_\gamma$ are not sufficient. One can check that for $I = \R$ the geometric inequality \eqref{E:convl gi} fails, and the corresponding convolution and restricted X-ray transforms are unbounded.

We devote the bulk of this section to the proof of Proposition~\ref{P:RWT}, and we will indicate what changes need to be made in order to prove Proposition~\ref{P:RWTXray} at the end of the section.  

\begin{proof}[Proof of Proposition~\ref{P:RWT}]  
By standard approximation arguments, we may assume that $I$ is a compact interval and that $\gamma\in C^d(I;\R^d)$.  Since $f$ is log-concave, we may split $I$ into two open intervals $I_L$ and $I_R$ such that $f$ is increasing on $I_L$ and decreasing on $I_R$. We can reparametrize $t\mapsto -t$ on $I_R$, and therefore without loss of generality we assume that $f$ is increasing on $I$. Since the restricted weak type inequality is trivial on any interval on which $\lambda_\gamma$ is identically zero, and since $f$ is increasing, we may assume that $\lambda_\gamma$ is nonvanishing on $I$.  Thus we may assume that $f \in C^1$.  Finally, by a reparametrization, we may assume that $I=[0,1]$.  These assumptions will remain in force for the remainder of the argument.  

The following will be helpful later on.

\begin{lemma} \label{L:foh} For $0 \leq \rho \leq \int_0^1 \lambda_\gamma(t)\, dt$, define $h(\rho)$ to be the unique element of $I$ satisfying 
$$
\int_0^{h(\rho)} \lambda_\gamma(t)\, dt = \rho.
$$
Then for all $0 \leq a,b \leq \int_0^1\lambda_\gamma(t)\, dt$ and $0 \leq \theta \leq 1$,
\begin{equation} \label{E:foh}
\lambda_\gamma\circ h((1-\theta)a+\theta b)\gtrsim (1-\theta)\lambda_\gamma\circ h(a) + \theta \lambda_\gamma \circ h(b).
\end{equation}
\end{lemma}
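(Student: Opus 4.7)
The plan is to use $\lambda_\gamma \sim f$ to reduce \eqref{E:foh} to the almost-concavity of $g := f \circ h$ on $[0,\int_0^1 \lambda_\gamma(t)\,dt]$, namely $g((1-\theta)a+\theta b) \gtrsim (1-\theta)g(a) + \theta g(b)$. I would establish this by showing that $g'$ is comparable to a nonincreasing function and then extracting almost-concavity from that comparison.

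Since $\lambda_\gamma$ is continuous and positive on $I=[0,1]$, $h$ is $C^1$ with $h'(\rho) = 1/\lambda_\gamma(h(\rho))$. Since $\log f$ is concave on $I$ and $f$ is positive, $\log f$ is locally Lipschitz on the interior of $I$, so $f$ is a.e.\ differentiable; hence $g$ is absolutely continuous and
\[
g'(\rho) = \frac{f'(h(\rho))}{\lambda_\gamma(h(\rho))} \sim \frac{f'(h(\rho))}{f(h(\rho))} = (\log f)'(h(\rho)) =: k(\rho) \qquad \text{a.e.}
\]
Concavity of $\log f$ makes $(\log f)'$ nonincreasing on its domain, and composing with the increasing function $h$ keeps $k$ nonincreasing. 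Since $f$ is increasing (by the reduction made at the start of the proof of Proposition~\ref{P:RWT}), $k \ge 0$; writing $c_1 k \le g' \le c_2 k$, the ratio $c_2/c_1 \ge 1$ is controlled by the implicit constants in $\lambda_\gamma \sim f$.

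The remaining step is a real-variable observation. For $a<b$ in $[0,\int_0^1 \lambda_\gamma(t)\,dt]$ and $c=(1-\theta)a+\theta b$, the monotonicity of $k$ yields
\[
g(c) - g(a) = \int_a^c g'(s)\,ds \ge c_1(c-a)k(c) = c_1\,\theta(b-a)k(c),
\]
\[
g(b) - g(c) = \int_c^b g'(s)\,ds \le c_2(b-c)k(c) = c_2(1-\theta)(b-a)k(c).
\]
Eliminating $k(c)$ between the two inequalities gives $(c_2/c_1)(1-\theta)(g(c)-g(a)) \ge \theta(g(b)-g(c))$, which after rearranging and dividing by $(c_2/c_1)(1-\theta) + \theta$ (a quantity at most $c_2/c_1$) produces
\[
g(c) \;\ge\; \frac{c_1}{c_2}\bigl[(1-\theta)g(a) + \theta g(b)\bigr].
\]
Invoking $\lambda_\gamma \sim f$ once more then yields \eqref{E:foh}.

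The crux of the proof is really just recognizing that replacing $\lambda_\gamma$ by the equivalent $f$ in the denominator of $g'$ converts $g'$ into the derivative of $\log f$ (composed with $h$), whose monotonicity is exactly the log-concavity hypothesis. The only technical nuisance is the a.e.\ differentiability of $f$, which is automatic for positive log-concave functions; if further regularity were needed anywhere, one could mollify $\log f$ (preserving both concavity and, to arbitrary precision, the equivalence $\lambda_\gamma \sim f$) and pass to the limit.
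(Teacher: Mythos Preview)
Your proof is correct and follows essentially the same route as the paper: both compute $(f\circ h)'(\rho) = f'(h(\rho))/\lambda_\gamma(h(\rho)) \sim (\log f)'\circ h(\rho)$ and use that the right side is nonincreasing (by log-concavity of $f$ and monotonicity of $h$) to conclude almost-concavity of $f\circ h$, then transfer back to $\lambda_\gamma\circ h$. The paper is terser---it simply asserts ``$f\circ h$ is concave'' from the comparison $(f\circ h)' \sim (\log f)'\circ h$---whereas you carry out explicitly the step from ``$g'$ is comparable to a nonincreasing nonnegative function'' to the almost-concavity inequality $g(c) \ge (c_1/c_2)[(1-\theta)g(a)+\theta g(b)]$, which is a genuine (if routine) detail the paper glosses over.
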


\begin{proof}[Proof of Lemma~\ref{L:foh}]
We compute
\begin{equation} \label{E:foh'}
(f \circ h)'(\rho) = f'\circ h(\rho)h'(\rho) = \frac{f'\circ h(\rho)}{\lambda_\gamma\circ h(\rho)} \sim \frac{f'\circ h(\rho)}{f\circ h(\rho)} = (\log f)'\circ h.
\end{equation}
Since $\lambda_\gamma>0$, $h$ is strictly increasing and $\log f$ is concave, $(\log f)'\circ h$ is decreasing.  Therefore by \eqref{E:foh'}, $f\circ h$ is concave, and since $\lambda_\gamma \sim f$, \eqref{E:foh} follows.  
\end{proof}

Define quantities 
$$
\alpha = \tfrac{\langle T_\gamma \chi_E, \chi_F \rangle}{|F|}, \qquad \beta = \tfrac{\langle T_\gamma \chi_E, \chi_F \rangle}{|E|}.
$$
By simple arithmetic, \eqref{E:RWT} is equivalent to 
\begin{equation} \label{E:RWT equiv}
|F| \gtrsim \beta^d \alpha^{\frac{d^2-d}2}.
\end{equation}

The key step in the proof is the next lemma.  

\begin{lemma} \label{L:param set}
If $d$ is even, there exist $x_0 \in F$ and a Borel set $\Omega \subset I^d$ with the following properties:
$$
\int_\Omega \prod_{i=1}^d \lambda_\gamma(t_i)\, dt \gtrsim \alpha^{\frac d2}\beta^{\frac d2}; 
$$
if $(t_1,\ldots,t_d) \in \Omega$, then $0 < t_1 < t_2 < \cdots < t_d < 1$, and furthermore,
\begin{gather} \label{E:in F}
x_0+\sum_{i=1}^d (-1)^i \gamma(t_i) \in F,\\ \label{E:gtrsim ab}
\int_0^{t_1} \lambda_\gamma(t)\, dt \gtrsim \max\{\alpha,\beta\},\qquad  \int_{t_{i-1}}^{t_i} \lambda_\gamma(t)\, dt \gtrsim \begin{cases}\alpha, &\qtq{if $i>1$ is odd,}\\\beta, &\qtq{if $i$ is even.}\end{cases}
\end{gather}
If $d$ is odd, there exist a point $y_0 \in E$ and a Borel set $\Omega \subset I^d$ satisfying:
$$
\int_\Omega \prod_{i=1}^d \lambda_\gamma(t_i)\, dt \gtrsim \beta^{\frac{d+1}2}\alpha^{\frac{d-1}2};
$$
and moreover, if $(t_1,\ldots,t_d) \in \Omega$, then $0 < t_1 < t_2 < \cdots < t_d < 1$, and 
\begin{gather*}
y_0-\sum_{i=1}^d (-1)^i \gamma(t_i) \in F\\
\int_0^{t_1}\lambda_\gamma(t)\, dt \gtrsim \max\{\alpha,\beta\}, \qquad \int_{t_{i-1}}^{t_i} \lambda_\gamma(t)\, dt \gtrsim \begin{cases}\beta, &\qtq{if $i>1$ is odd,}\\\alpha,&\qtq{if $i$ is even.}\end{cases}
\end{gather*}
\end{lemma}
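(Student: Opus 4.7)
My plan is to prove Lemma \ref{L:param set} by Christ's method of refinements, combined with the Gressman ordering trick mentioned in the introduction. I would handle the even-$d$ case; the odd-$d$ case is parallel after interchanging the initial roles of $E$ and $F$ (and of $\alpha$ and $\beta$ at the first step).

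Standard refinement. I would set $E_0 := \{y \in E : T_\gamma^*\chi_F(y) \geq \beta/2\}$, which by Chebyshev satisfies $|E_0| \geq |E|/2$. The duality identity
\[
\int_F T_\gamma\chi_{E_0} = \int_{E_0} T_\gamma^*\chi_F \geq \tfrac{\beta|E|}{4} = \tfrac{\alpha|F|}{4}
\]
then yields a point $x_0 \in F$ with $T_\gamma\chi_{E_0}(x_0) \geq \alpha/4$. I would iterate: given $(t_1,\ldots,t_{i-1})$ for which the partial sum $z_{i-1} := x_0 + \sum_{j < i}(-1)^j\gamma(t_j)$ lies in the appropriate alternating target (with $z_0 = x_0 \in F$, and subsequently $z_{i-1} \in E_0$ for even $i$ and $z_{i-1} \in F$ for odd $i > 1$), the admissible set $A_i(t_1,\ldots,t_{i-1})$ of $t_i$-values making $z_i := z_{i-1} + (-1)^i\gamma(t_i)$ lie in the next target has $\lambda_\gamma$-mass $\gtrsim \alpha$ for odd $i$ and $\gtrsim \beta$ for even $i$, by the defining properties of $E_0$ and $x_0$. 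Integrating over the $d$ iterated sets produces, with no ordering imposed, a subset of $I^d$ on which $\prod \lambda_\gamma(t_j)\,dt$ integrates to $\gtrsim \alpha^{d/2}\beta^{d/2}$ and on which $x_0 + \sum(-1)^i\gamma(t_i) \in F$ is automatic.

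Ordering. To enforce the value-ordering $t_1 < \cdots < t_d$ and the specific gap conditions, I would pass to the arclength coordinates $\rho = \int_0^t \lambda_\gamma$ supplied by Lemma \ref{L:foh}, under which $\lambda_\gamma\,dt$ becomes Lebesgue measure and the gap conditions reduce to $\rho_1 \gtrsim \max\{\alpha,\beta\}$ and $\rho_i - \rho_{i-1}$ bounded below by $\alpha$ or $\beta$. At each refinement step I would further restrict the admissible $A_i$ to the forward interval $[\rho_{i-1} + c\cdot(\text{required gap}),\,M]$, where $M = \int_0^1 \lambda_\gamma$. The main obstacle is that a priori $A_i$ could lie entirely in $[0,\rho_{i-1}]$, so this restriction might empty it. I would handle this with Gressman's additional pigeonhole at each step, splitting $A_i$ by its position relative to $\rho_{i-1}$: in the right-of-$\rho_{i-1}$ case one proceeds directly, and in the left-of case one reverses the corresponding portion of the chain, using the $E\leftrightarrow F$ duality of $T_\gamma$ with $T_\gamma^*$ and the alternating-sign structure of the sum to preserve target membership up to a relabeling of indices. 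The log-concavity of $\lambda_\gamma$ (encoded in Lemma \ref{L:foh}) is what makes the reparametrization well-behaved and ensures that each such pigeonhole loses at most a multiplicative constant. After all $d$ steps the surviving set $\Omega$ has mass $\gtrsim \alpha^{d/2}\beta^{d/2}$, its tuples are ordered with the prescribed gaps, and the membership condition $x_0 + \sum(-1)^i\gamma(t_i) \in F$ persists by construction. The delicate part is the sign bookkeeping in the reversed sub-cases; that is precisely where Gressman's argument replaces the band structure machinery of \cite{ChCCC}.
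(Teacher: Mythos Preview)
Your basic refinement scheme is sound, but the ordering argument has a genuine gap. The difficulty you correctly identify---that $A_i$ could lie entirely below $\rho_{i-1}$---is not repaired by a left/right pigeonhole plus ``reversal.'' The obstruction is that the left/right dichotomy depends on the particular $(t_1,\ldots,t_{i-1})$, so a single global choice is unavailable; and if you pigeonhole pointwise you are left with $2^d$ direction-patterns, none of which (except the all-right one) can be relabeled into an increasing sequence while simultaneously preserving both the alternating-sign identity $z_d = x_0 + \sum_i (-1)^i\gamma(t_i)$ and the prescribed $\alpha/\beta$ pattern of the gaps. The appeal to $E\leftrightarrow F$ duality does not help locally at a single step, and the invocation of Lemma~\ref{L:foh} is misplaced: log-concavity is used after the lemma, to convert the integral gap conditions into pointwise separation $|t_i-t_j|$, not to establish the ordering itself.

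The paper's mechanism is different and cleaner. One works in the incidence set $U=\{(x,t)\in F\times I: x-\gamma(t)\in E\}$ and refines $U$ \emph{before} choosing any base point. The zeroth refinement excises $\{t<h(\tfrac14\alpha)\}\cup\{t<h(\tfrac14\beta)\}$, which costs at most half the mass and forces $\int_0^{t}\lambda_\gamma\gtrsim\max\{\alpha,\beta\}$ on the remainder. Each subsequent refinement $U_k\to U_{k+1}$ is done in two passes: first discard points whose full fiber (in the appropriate direction, $\pi_1$ or $\pi_2$) is small, then discard points whose \emph{forward} fiber $\{s>t:\ldots\}$ is small. The second pass is the whole trick: since on the surviving set the full fiber is already large, removing points whose forward half is small costs at most half again. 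After $d$ rounds one picks any $(x_0,t_0)\in U_d$ and unrolls the chain; the forward-fiber condition at each level guarantees $t_1<t_2<\cdots<t_d$ with the stated gaps automatically, with no case analysis. (A minor point: your claim $|E_0|\ge|E|/2$ does not follow from Chebyshev, though the integral bound you actually use does.)
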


Before proving the lemma, we show how to complete the proof of the proposition.  This portion of the argument is much simpler than previous arguments such as \cite{ChCCC} because the ordering of the $t_i$ means that we can avoid the band structure argument.  That the $t_i$ might be ordered was suggested to the second author by Phil Gressman.

We assume for now that the dimension $d$ is even; the completion of the proof in the odd dimensional case is similar and will be left to the reader.  

Recall that we may assume that $L_\gamma$ is never zero on $I$.  This plus the ordering implies that $\Phi_\gamma$ is one-to-one on $\Omega$ (see \cite[Section~3]{DrM2}).  From \eqref{E:in F}, this injectivity, and the geometric inequality, we have
\begin{equation} \label{E:lb F}
|F| \gtrsim \int_\Omega \prod_{i=1}^d |L_\gamma(t_i)|^{\frac 1d}\prod_{i<j}|t_i-t_j|\, dt.
\end{equation}

Let $(t_1,\ldots,t_d) \in \Omega$.  Let $i>1$ be odd.  By \eqref{E:gtrsim ab}, there exists an $s_i$ with $t_{i-1}<s_i<t_i$ such that
$$
\int_{t_{i-1}}^{s_i} \lambda_\gamma(t)\, dt \sim \alpha.
$$
By \eqref{E:gtrsim ab} and the fact that $t_{i-1} \geq t_1$, this implies that
$$
\int_0^{s_i} \lambda_\gamma(t)\, dt = \theta \int_0^{t_{i-1}}\lambda_\gamma(t)\, dt,
$$
for some $\theta \sim 1$.
Thus by Lemma~\ref{L:foh}, 
\begin{align*}\lambda_\gamma(s_i) &\sim f(s_i) \ge f(t_{i-1}) \sim \lambda_\gamma(t_{i-1}) = \lambda_\gamma \circ h\Big(\int_0^{t_{i-1}}\lambda_\gamma(t) dt\Big) \\ 
&\gtrsim \lambda_\gamma \circ h\Big(\int_0^{s_i}\lambda_\gamma(t) dt\Big) =\lambda_\gamma(s_i). 
\end{align*}
Therefore
\begin{align*}
|t_i-t_{i-1}|&\geq |s_i-t_{i-1}| \sim \lambda_\gamma(s_i)^{-\frac12}\lambda_\gamma(t_{i-1})^{-\frac12} \int_{t_{i-1}}^{s_i}\lambda_\gamma(t)\, dt \\&\gtrsim \alpha \lambda_\gamma(t_{i-1})^{-\frac12}\lambda_\gamma(s_i)^{-\frac12} \gtrsim \alpha \lambda_\gamma(t_{i-1})^{-\frac12}\lambda_\gamma(t_i)^{-\frac12}.  
\end{align*}
Similar arguments show that if $i$ is even, 
$$
|t_i-t_{i-1}| \gtrsim \beta\lambda_\gamma(t_{i-1})^{-\frac12}\lambda_\gamma(t_i)^{-\frac12}
$$
and if $1 \leq i \leq j-2 \leq d-2$, then 
$$
|t_j-t_i| \gtrsim \alpha \lambda_\gamma(t_j)^{-\frac12}\lambda_\gamma(t_i)^{-\frac12}.
$$
Thus if $(t_1,\ldots,t_d) \in \Omega$,
$$
\prod_{1 \leq i<j\leq d} |t_j-t_i| \gtrsim \alpha^{\frac{d(d-1)}2-\frac d2}\beta^{\frac d2}\prod_{i=1}^d \lambda_\gamma(t_i)^{-\frac{d-1}2}, 
$$
so by \eqref{E:lb F} and some arithmetic (recall that $\lambda_\gamma = |L_\gamma|^{\frac2{d(d+1)}}$),
$$
|F| \gtrsim \alpha^{\frac{d(d-1)}2-\frac d2}\beta^{\frac d2}\int_\Omega \prod_{i=1}^d \lambda_\gamma(t_i)\, dt \gtrsim \alpha^{\frac{d(d-1)}2}\beta^d,
$$
which is just \eqref{E:RWT equiv}.  Therefore, all that is needed to establish Proposition~\ref{P:RWT} is to prove Lemma~\ref{L:param set}.  

\begin{proof}[Proof of Lemma~\ref{L:param set}]
Let 
$$
U = \{(x,t) \in F \times I : x-\gamma(t) \in E\}.
$$
Then
\begin{equation} \label{E:U = ip}
\langle T_\gamma \chi_E, \chi_F \rangle = \int \chi_U(x,t) \, \lambda_\gamma(t)\, dt\, dx.
\end{equation}
Define projections $\pi_1:U \to E$ and $\pi_2:U \to F$ by
$$
\pi_1(x,t) = x-\gamma(t), \qquad \pi_2(x,t) = x.
$$
This means that $U= \pi_1^{-1}(E) \cap \pi_2^{-1}(F)$.

We will make several refinements to $U$.  Define
$$
B_0 = \{(x,t) \in U : 0 < t < h(\tfrac14\alpha) \}, \qquad B_0' = \{(x,t) \in U : 0 < t < h(\tfrac14\beta)\}.
$$
Then by Fubini and the change of variables formula,
\begin{align*}
\int {\chi}_{B_0}(x,t) \lambda_\gamma(t)\, dt\, dx &= \int \chi_F(x) \int_0^{h(\tfrac14\alpha)} \chi_E(x-\gamma(t)) \lambda_\gamma(t)\, dt\, dx\\
&\le \int\chi_F(x) \tfrac14\alpha \, dx = \tfrac14\alpha|F| = \tfrac14\langle T_\gamma\chi_E,\chi_F\rangle,\\
\int\chi_{B_0'}(x,t)\lambda_\gamma(t)\, dt\, dx &= \int\chi_E(y)\int_0^{h(\tfrac14\beta)} \chi_F(y+\gamma(t)) \lambda_\gamma(t)\, dt\, dy\\
&<\tfrac14\beta|E| = \tfrac14\langle T_\gamma\chi_E,\chi_F \rangle.
\end{align*}
Therefore by \eqref{E:U = ip},
$$
\int\chi_{U \setminus(B_0 \cup B_0')}\lambda_\gamma(t)\, dt\, dx \geq \tfrac12 \int\chi_U(x,t) \lambda_\gamma(t)\, dt\, dx.
$$
Set $U_0=U\setminus(B_0\cup B_0')$.  Then
\begin{align} \label{E:size U0}
\tfrac12\langle T_\gamma\chi_E,\chi_F\rangle &\leq \int\chi_{U_0}(x,t)\, \lambda_\gamma(t)\, dt\, dx \leq \langle T_\gamma\chi_E,\chi_F\rangle,\\ \label{E:lambda U0}
\int_0^t \lambda_\gamma(s)\, ds &\geq \tfrac14\max\{\alpha,\beta\}, \qtq{for all} (x,t) \in U_0.
\end{align}

Define
$$
B_1 = \{(x,t) \in U_0 : \int_0^1 \chi_{U_0}(x-\gamma(t)+\gamma(s),s)\, \lambda_\gamma(s)\, ds < \tfrac1{8}\beta\}.
$$
Then by the change of variables formula and \eqref{E:lambda U0},
\begin{align*}
& \int \chi_{B_1}(x,t)\, \lambda_\gamma(t)\, dt\, dx = \int \chi_{B_1}(y+\gamma(t),t)\, \lambda_\gamma(t)\, dt\, dy\\
&\qquad  \leq \int_{\{y \in E : T^*\chi_F(y)<\frac1{8}\beta\}} T^*\chi_F(y)\, dy
  < \tfrac1{8}\beta |E|<\tfrac12 \int\chi_{U_0}(x,t)\, \lambda_\gamma(t)\, dt\, dx.
\end{align*}
Up to now, this has been the usual procedure in the method of refinements. We must take further care to ensure that the $t_i$ are ordered by removing points $(x,t)$ for which $t$ is too large. To this end, let $U_1' = U_0 \setminus B_1$ and define
$$
B_1' = \{(x,t) \in U_1' : \int_t^1 \chi_{U_1'}(x-\gamma(t)+\gamma(s),s)\, \lambda_\gamma(s)\, ds < \tfrac1{16}\beta\}.
$$
If $y \in \pi_1(U_1')$,
$$
\int \chi_{U_1'}(y+\gamma(s),s)\, \lambda_\gamma(s)\, ds = \int \chi_{U_0}(y+\gamma(s),s)\, \lambda_\gamma(s)\, ds \geq \tfrac18\beta,
$$
and
$$
\int\chi_{B_1'}(y+\gamma(s),s)\lambda_\gamma(s)\, ds = \tfrac1{16}\beta \leq \tfrac12 \int\chi_{U_1'}(y+\gamma(s),s)\, \lambda_\gamma(s)\, ds.
$$
Therefore if $U_1:= U_1'\setminus B_1'$,
\begin{align*}
\int\chi_{U_1}(x,t)\, \lambda_\gamma(t)\, dt \, dx &= \int_{\pi_1(U_1)}\int\chi_{U_1}(y+\gamma(s),s)\, \lambda_\gamma(s)\, ds\, dy\\
& \geq \tfrac12 \int_{\pi_1(U_1')} \int\chi_{U_1'}(y+\gamma(s),s) \, \lambda_\gamma(s)\, ds\, dy
\\&= \tfrac12\int\chi_{U_1'}(x,t)\lambda_\gamma(t)\, dt \, dx \geq \tfrac1{16} \int\chi_U(x,t)\lambda_\gamma(t)\, dt\, dx.
\end{align*}
In short, if $(x,t)\in U_1$, then there is a set $A(x,t) \subset (t,1)$ such that
\[
\int_{A(x,t)} \lambda_\gamma (s) ds \geq \tfrac{1}{16}\beta, \quad \tnorm{and} \quad s\in A(x,t) \Rightarrow (x-\gamma(t) +\gamma(s),s)\in U_0.
\]
It is the fact that $A(x,t) \subset (t,1)$ that will allow us to ensure that the $t_i$ are increasing when we form the set $\Omega_d$.

To continue this process, we define
\begin{align*}
B_2 &= \{(x,t) \in U_1 : \int_0^1 \chi_{U_1}(x,s)\, \lambda_\gamma(s)\, ds < \tfrac1{32}\alpha\}, \qquad  U_2' = U_1\setminus B_2,\\
B_2' &= \{(x,t) \in U_2' : \int_t^1 \chi_{U_2'}(x,s)\, \lambda_\gamma(s)\, ds < \tfrac1{64}\alpha\}, \qquad U_2= U_2' \setminus B_2'.
\end{align*}
Arguing as above, 
$$
\int \chi_{U_2}(x,t)\, \lambda_\gamma(t)\, dt\, dx \geq \tfrac1{64}\int\chi_U(x,t)\, \lambda_\gamma(t)\, dt\, dx.
$$

We iterate.  Assume that the set $U_k \subset U$ satisfying 
$$
\int \chi_{U_k}(x,t)\, \lambda_\gamma(t)\, dt\, dx \geq \tfrac1{4^{k+1}}\int\chi_U(x,t)\, \lambda_\gamma(t)\, dt\, dx
$$
has been constructed.  If $k$ is even, we define
\begin{align*}
B_{k+1} &= \{(x,t)\in U_k : \int_0^1 \chi_{U_k}(x-\gamma(t)+\gamma(s),s)\, \lambda_\gamma(s)\, ds < \tfrac1{4^{k+3/2}}\beta\}, \\
U_{k+1}' &= U_k\setminus B_{k+1},\\
B_{k+1}' &= \{(x,t) \in U_{k+1}': \int_t^1\chi_{U_{k+1}'}(x-\gamma(t)+\gamma(s),s)\, \lambda_\gamma(s)\, ds < \tfrac1{4^{k+2}}\beta\},\\
U_{k+1} &= U_{k+1}'\setminus B_{k+1}',
\end{align*}
so if $(x,t) \in U_{k+1}$,
$$
\int_t^1 \chi_{U_k}(x-\gamma(t)+\gamma(s),s)\, \lambda_\gamma(s)\, ds \geq \tfrac1{4^{k+2}}\beta, \quad \text{$k$ even.}
$$
If $k$ is odd, we define
\begin{align*}
B_{k+1}  &= \{(x,t) \in U_k : \int_0^1 \chi_{U_k}(x,s)\, \lambda_\gamma(s)\, ds < \tfrac1{4^{k+3/2}}\alpha\},\\
U_{k+1}' &= U_k \setminus B_{k+1},\\
B_{k+1}' &= \{(x,t) \in U_{k+1}' : \int_t^1 \chi_{U_{k+1}'}(x,s)\, \lambda_\gamma(s)\, ds < \tfrac1{4^{k+2}}\alpha\},\\
 U_{k+1} &= U_{k+1}'\setminus B_{k+1}',
\end{align*}
so if $(x,t) \in U_{k+1}$,
$$
\int_t^1 \chi_{U_k}(x,s)\, \lambda_\gamma(s)\, ds \geq \tfrac1{4^{k+2}}\alpha, \quad \text{$k$ odd.}
$$
Similar arguments to those for $k=1,2$ show that 
$$
\int \chi_{U_k}(x,t)\, \lambda_\gamma(t)\, dt\, dx \geq \tfrac1{4^{k+1}}\int\chi_U(x,t)\, \lambda_\gamma(t)\, dt\, dx, \quad \text{for all $k$.}
$$
In particular, $U_d$ is nonempty.

If the sets $E$ and $F$ are Borel, then $U$ is Borel, as are each of the refinements $U_k$, so measurability is not an issue.  

At this point, the arguments when $d$ is even and $d$ is odd diverge.  We give the details when $d$ is even; the proof when $d$ is odd is essentially the same.  Let $(x_0,t_0) \in U_d$.  We will construct a sequence of sets, $\Omega_1 \subset I$, $\Omega_k \subset \Omega_{k-1}\times I$, $2 \leq i \leq d$; $\Omega = \Omega_d$ will be the set whose existence was claimed in Lemma~\ref{L:param set}.

We construct the $\Omega_k$ inductively.  Let $(t_1,\ldots,t_k) \in \Omega_k$, and define
$$
(x_k,t_k) = \begin{cases} (x_0+\sum_{j=1}^{k-1}(-1)^j \gamma(t_j),t_k), \quad&\text{$k$ odd;}\\ (x_0+\sum_{j=1}^k (-1)^j \gamma(t_j),t_k), \quad &\text{$k$ even.}\end{cases}
$$
The $\Omega_k$ will be defined so that $(x_k,t_k) \in U_{d-k}$, $0 \leq k \leq d$.  In particular, $x_0 \in F$ and if $(t_1,\ldots,t_d) \in \Omega_d$, then $x_d = x_0+\sum_{j=1}^d (-1)^j \gamma(t_j) \in F$.  

Since $(x_k,t_k) \in U_{d-k}$ (and since $d$ is even),
\begin{equation} \label{E:after tk}
\begin{aligned}
\int_{t_k}^1 \chi_{U_{d-k-1}}(x_k,s) \, \lambda_\gamma(s)\, ds \geq \tfrac1{4^{d-k+1}}\alpha, \quad &\text{ $k$  even,}\\
\int_{t_k}^1 \chi_{U_{d-k-1}}(x_k-\gamma(t_k)+\gamma(s),s)\, \lambda_\gamma(s)\, ds \geq \tfrac1{4^{d-k+1}}\beta, \quad &\text{ $k$  odd}.
\end{aligned}
\end{equation}
Choose $s_{k+1} \geq t_k$ such that
$$
\int_{t_k}^{s_{k+1}}\lambda_\gamma(s)\, ds =
\begin{cases} \tfrac1{4^{d-k+3/2}}\alpha, \quad &\text{ $k$  even,}\\ \tfrac1{4^{d-k+3/2}}\beta, \quad &\text{ $k$  odd.}\end{cases}
$$
Define
\begin{align*}
\Omega_{k+1}(x_k,t_k) = \begin{cases} \{t_{k+1} \geq s_{k+1} : (x_k,t_{k+1}) \in U_{d-k-1}\},  &\text{ $k$  even,}\\
 \{t_{k+1} \geq s_{k+1} : (x_k-\gamma(t_k)+\gamma(t_{k+1}),t_{k+1}) \in U_{d-k-1}\},  &\text{ $k$  odd}.
 \end{cases}
\end{align*}
By \eqref{E:after tk},
$$
\int_{\Omega_{k+1}(x_k,t_k)}\lambda_\gamma(t_{k+1})\, dt_{k+1} \geq \begin{cases} \tfrac1{4^{d-k+3/2}}\alpha, \quad &\text{ $k$  even,}\\
 \tfrac1{4^{d-k+3/2}}\beta, \quad &\text{ $k$  odd}.\end{cases}
 $$
Finally, define $\Omega_1 = \Omega_1(x_0,t_0)$, and if $k \geq 1$, define
$$
\Omega_{k+1} = \{(t_1,\ldots,t_k,t_{k+1}) : (t_1,\ldots,t_k) \in \Omega_k, \: t_{k+1} \in \Omega_{k+1}(x_k,t_k)\}.
$$

The final set, $\Omega=\Omega_d$ now has all of the properties claimed in the lemma.  This completes the proof of Lemma~\ref{L:param set}.
\end{proof}
The proof of Proposition~\ref{P:RWT} is also now complete.  
\end{proof}

We now turn to the restricted X-ray transform.  In establishing bounds for $X$ via the method of refinements, the maps that arise are slightly more complicated.  Given base points $(s_0,x_0), (t_0,y_0) \in \R^{1+d}$, we define maps $\Phi^k_{(s_0,x_0)}, \linebreak \Psi^k_{(t_0,y_0)}:\R^k \to \R^{1+d}$ ($k=1,2,\dots,d+1$) by
\begin{align}
\label{E:def Phi even}
&\Phi^{2K}_{(s_0,x_0)}(t_1,s_1,\dots,t_K,s_K)   = \bigl(s_K,x_0 - \sum_{j=1}^K(s_{j-1}-s_j)\gamma(t_j)\bigr), \\
\label{E:def Phi odd} 
&\Phi^{2K+1}_{(s_0,x_0)}(t_1,s_1,\dots,t_{K+1})  = \bigl(t_{K+1},x_0 - \sum_{j=1}^K(s_{j-1}-s_j)\gamma(t_j) -s_K\gamma(t_{K+1})\bigr), \\
\label{E:def Psi even}
&\Psi^{2K}_{(t_0,y_0)}(s_1,t_1,\dots,s_K,t_K)  = \bigl(t_K,y_0 + \sum_{j=1}^Ks_j(\gamma(t_{j-1}) - \gamma(t_j))\bigr), \\
\label{E:def Psi odd}
&\Psi^{2K+1}_{(t_0,y_0)}(s_1,t_1,\dots,s_{K+1}) = \bigl(s_{K+1},y_0 + s_1\gamma(t_0)- \sum_{j=1}^K (s_j-s_{j+1})\gamma(t_j)\bigr).
\end{align}

We have the following geometric inequalities.   

\begin{proposition} \label{P:jacobian bounds} Let $\gamma:I \to \R^d$ be a $C^d$ curve satisfying the geometric inequality \eqref{E:convl GI} on $I^d$.  

\emph{(i)} If $d+1 = 2D$ is an even integer, $(s_0,s_1,\dots,s_D) \in \R^{D+1}$ and $(t_0,t_1,\dots,t_D) \in I_j^{D+1}$, then
\begin{align} \label{E:Psi even}
&|\det\bigl(D\Psi^{d+1}_{(t_0,y_0)}(s_1,t_1,\dots,s_D,t_D)\bigr)| \\ \notag
&\qquad  \gtrsim \prod_{i=1}^{D-1}\bigl\{|s_{i+1}-s_i||L_\gamma(t_i)|^{\frac2{d+1}}\prod_{\stackrel{0 \leq j \leq D}{j\neq i}}|t_j-t_i|^2\bigr\} |L_\gamma(t_0)|^{\frac1{d+1}}|L_\gamma(t_D)|^{\frac1{d+1}}|t_D-t_0| \\
 \label{E:Phi even}
& \begin{aligned}
 &|\det\bigl(D\Phi^{d+1}_{(s_0,x_0)}(t_1,s_1,\dots,t_D,s_D)\bigr)| \\
 &\qquad \gtrsim \prod_{i=1}^D \bigl\{|s_i-s_{i-1}||L_\gamma(t_i)|^{\frac2{d+1}}\prod_{\stackrel{1 \leq j \leq D}{j\neq i}}|t_j-t_i|^2\bigr\} 
 \end{aligned}
\end{align}

\emph{(ii)} If $d+1=2D+1$ is odd, then analogous statements hold, only we must modify the bounds in \eqref{E:Psi even}, \eqref{E:Phi even} to
\begin{align} \label{E:Psi odd}
&\begin{aligned}
&|\det\bigl(D\Psi^{d+1}_{(t_0,y_0)}(s_1,t_1,\dots,s_{D+1})\bigr)| \\ 
&\qquad  \gtrsim \prod_{i=1}^D\bigl\{|s_{i+1}-s_i||L_\gamma(t_i)|^{\frac2{d+1}}\prod_{\stackrel{0 \leq j \leq D}{j \neq i}}|t_j-t_i|^2\bigr\} |L_\gamma(t_0)|^{\frac1{d+1}}
\end{aligned}\\
\label{E:Phi odd}
&\begin{aligned}
& |\det\bigl(D\Phi^{d+1}_{(s_0,x_0)}(t_1,s_1,\dots,t_{D+1})\bigr)|  \\ 
& \qquad \gtrsim \prod_{i=1}^D \bigl\{|s_i-s_{i-1}| |L_\gamma(t_i)|^{\frac2{d+1}}\prod_{\stackrel{1 \leq j \leq D+1}{j \neq i}}|t_j-t_i|^2 \bigr\} |L_\gamma(t_{D+1})|^{\frac1{d+1}}.
\end{aligned}
\end{align}
Here again, $t_i \in I_j$, while $s_i \in \R$.
\end{proposition}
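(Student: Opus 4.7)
The plan is to reduce all four bounds to the convolution geometric inequality \eqref{E:convl GI} via multilinearity of the determinant, using integral representations for the difference columns $\gamma(t_{j-1})-\gamma(t_j)$. I will describe the argument for $\Psi^{d+1}$ in the even case $d+1=2D$; the other three cases are essentially the same after relabeling.

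First I would compute the Jacobian of $\Psi^{d+1}_{(t_0,y_0)}$. Because $t_0$ is fixed and the first coordinate of $\Psi^{d+1}$ is $t_D$, the first row of $D\Psi^{d+1}$ has a single nonzero entry, namely a $1$ in the $t_D$-column. Cofactor expansion along this row reduces the problem to computing the absolute value of the determinant of the $d\times d$ minor $M$ whose columns, in order, are
\begin{align*}
\gamma(t_0)-\gamma(t_1),\;(s_2-s_1)\gamma'(t_1),\;\gamma(t_1)-\gamma(t_2),\;(s_3-s_2)\gamma'(t_2),\;\dots,\;(s_D-s_{D-1})\gamma'(t_{D-1}),\;\gamma(t_{D-1})-\gamma(t_D).
\end{align*}
Pulling the scalars $(s_{i+1}-s_i)$ out of the $\gamma'(t_i)$ columns produces $\prod_{i=1}^{D-1}|s_{i+1}-s_i|$, which is the $s$-factor in \eqref{E:Psi even}.

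Next I would write each difference column via the fundamental theorem of calculus,
\[
\gamma(t_{j-1})-\gamma(t_j)=(t_{j-1}-t_j)\int_0^1\gamma'\bigl(r_j(u_j)\bigr)\,du_j,\qquad r_j(u_j):=(1-u_j)t_j+u_jt_{j-1},
\]
and pull the scalar $(t_{j-1}-t_j)$ out. By multilinearity of the determinant, the remaining factor is
\[
\int_{[0,1]^D}\bigl|J_\gamma\bigl(r_1(u_1),t_1,r_2(u_2),t_2,\dots,t_{D-1},r_D(u_D)\bigr)\bigr|\,du,
\]
a mean of convolution-type Jacobians evaluated at $d=2D-1$ ordered points drawn from $I_j$. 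At this point the hypothesis \eqref{E:convl GI} applies pointwise inside the integrand and gives the lower bound $\prod_k|L_\gamma(\cdot)|^{1/d}\prod_{k<\ell}|\cdot-\cdot|$, with $d+1=2D$ replaced by $d=2D-1$ in the exponent of $L_\gamma$. The step I expect to require real work is bounding this integral from below so that the resulting weight matches the right-hand side of \eqref{E:Psi even}.

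To finish, I would restrict each $u_j$ to a fixed favorable sub-interval (say $u_j\in[\tfrac14,\tfrac12]$), so that $r_j$ is quantitatively separated from each $t_i$ and comparable to both $t_{j-1}$ and $t_j$ in the sense that $|r_j(u_j)-t_i|\sim|t_{j-1}-t_i|+|t_j-t_i|\sim\max(|t_{j-1}-t_i|,|t_j-t_i|)$. On this sub-cube, each pair difference $|r_j-r_k|$, $|r_j-t_k|$ in the Vandermonde-like factor dominates a product of $|t_p-t_q|$ factors, and the product of all these $t$-differences, together with the previously extracted $\prod_j|t_{j-1}-t_j|$, combines to give $\prod_{i=1}^{D-1}\prod_{j\neq i}|t_j-t_i|^2\cdot|t_D-t_0|$. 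The main obstacle in this step is bookkeeping: one must verify that each pair $\{j,k\}\subseteq\{0,\dots,D\}$ appears with the correct multiplicity $2$ (respectively $1$ for the endpoints) after collecting all of the contributions. For the $L_\gamma$ factors, substituting $r_j\in[t_j,t_{j-1}]$ into the integrand's $|L_\gamma(r_j)|^{1/d}$ and using either the almost-monotonicity that holds on each subinterval $I_j$ from Proposition~\ref{P:convl gi}, or the fact that $|L_\gamma|^{1/d}$ integrated against $du_j$ is bounded below by a geometric mean of $|L_\gamma(t_{j-1})|$ and $|L_\gamma(t_j)|$ to the power $1/d$, redistributes the interior $L_\gamma$ mass so that each interior index $1\le i\le D-1$ collects the exponent $2/(d+1)$ and each endpoint $i=0,D$ collects $1/(d+1)$, exactly as in \eqref{E:Psi even}. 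The bounds \eqref{E:Phi even}, \eqref{E:Psi odd}, \eqref{E:Phi odd} are obtained by an identical reduction, with the only parity-dependent change being whether the last column in the minor is a difference column (contributing an extra $|t_D-t_0|$ or $|L_\gamma(t_{D+1})|^{1/(d+1)}$ factor) or a $\gamma'$ column.
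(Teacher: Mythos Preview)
Your reduction of $\det D\Psi^{d+1}$ to the minor $M$ with alternating difference and derivative columns is correct, and pulling out the $s$-factors is fine.  The difficulties begin after you invoke multilinearity.

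First, a smaller point: multilinearity gives
\[
|\det M| \;=\; \Bigl(\prod_i|s_{i+1}-s_i|\Bigr)\Bigl(\prod_j|t_{j-1}-t_j|\Bigr)\,\Bigl|\int_{[0,1]^D} J_\gamma\bigl(r_1(u_1),t_1,\dots,t_{D-1},r_D(u_D)\bigr)\,du\Bigr|,
\]
with the absolute value \emph{outside} the integral.  Moving it inside requires the integrand to have constant sign, which is only automatic when $t_0,\dots,t_D$ are monotone (so that the $r_j$ interlace the $t_i$).  The proposition is stated for arbitrary $t_i\in I$, and for a non-monotone configuration the relative order of the $r_j$'s and $t_i$'s genuinely changes as $u$ varies, so cancellation can occur.

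Second, and more seriously, your proposed treatment of the $L_\gamma$ factors does not go through under the stated hypotheses.  Proposition~\ref{P:jacobian bounds} assumes \emph{only} the geometric inequality \eqref{E:convl GI}; it does \emph{not} assume the almost-monotonicity or almost-log-concavity hypotheses of Proposition~\ref{P:convl gi}.  Thus you cannot invoke ``almost-monotonicity on each $I_j$'' to compare $|L_\gamma(r_j)|$ with $|L_\gamma(t_{j-1})|,|L_\gamma(t_j)|$, and there is no reason in general for $\int_0^1|L_\gamma(r_j(u))|^{1/d}\,du$ to dominate a geometric mean of the endpoint values.  Without some such control, the redistribution of exponents from $1/d$ at the $r_j$'s to the pattern $2/(d{+}1)$ and $1/(d{+}1)$ at the $t_j$'s simply fails.

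The paper avoids both problems by a different maneuver.  Rather than integrating, it writes (using an identity from \cite{DSjfa})
\[
\det D\Psi^{d+1}
= \pm\Bigl(\prod_i(s_{i+1}-s_i)\Bigr)\,\Bigl\{\prod_{j=D+1}^{2D-1}\partial_{t_j}\big|_{t_j=t_{j-D}}\Bigr\}\det\begin{pmatrix}1&\cdots&1\\ \gamma(t_0)&\cdots&\gamma(t_{2D-1})\end{pmatrix},
\]
factors the augmented determinant as $\prod_{j=D+1}^{2D-1}(t_j-t_{j-D})\,F(t_0,\dots,t_{2D-1})$ with $F$ continuous, observes that the derivatives annihilate any term in which they hit $F$, and is left with $F$ evaluated at the ``diagonal'' $(t_0,\dots,t_D,t_1,\dots,t_{D-1})$.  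The bound on $|F|$ at nearby \emph{distinct} points follows from the $(d{+}1)$-point form of the geometric inequality for the augmented matrix, and continuity carries it to the diagonal.  In that argument the $L_\gamma$ factors appear already at the points $t_j$ themselves---no intermediate points $r_j$, hence no redistribution step and no sign issue.
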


(Technically, only two of these arise, but we give all inequalities for the possible convenience of the reader.)  

We also need an almost-injectivity result for the maps in (\ref{E:def Phi even}-\ref{E:def Phi odd}). It will be easier to state if we abuse notation and write $\Phi_{(s_0,x_0)}^{d+1}(t_1,s_1,\ldots) = \Phi_{(s_0,x_0)}^{d+1}(t,s)$, $\Psi_{(t_0,y_0)}^{d+1}(s_1,t_1,\ldots) = \Phi_{(t_0,y_0)}^{d+1}(t,s)$.

\begin{proposition} \label{P:one to one} Let $\gamma:I \to \R^d$ be a $C^1$ curve and assume that $J_\gamma$ is nonzero on $\{t \in I^d : t_1 < \cdots < t_d\}$.  Then each map $\Phi_{(s_0,x_0)}^{d+1}$, $\Psi_{(t_0,y_0)}^{d+1}$ is at most $(D+1)!$-to-one on
$$
\Delta := \{(t,s) : \text{$t_0,t_1,\ldots \in I$ are distinct, and $s_0,s_1,\ldots \in \R$ are distinct}\}
$$
\end{proposition}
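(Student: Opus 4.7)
The plan is to reduce the almost-injectivity of each of the four maps to a generalized moment uniqueness statement for the non-degenerate curve $\gamma$. I focus on $\Phi^{2D}_{(s_0,x_0)}$ and treat the other three maps---$\Phi^{2D+1}$, $\Psi^{2D}$, $\Psi^{2D+1}$---by analogous substitutions. First, reading off the first coordinate of the image gives $s_D$ (respectively $t_{D+1}$, $t_D$, $s_{D+1}$), which must coincide for any two preimages. Introducing auxiliary variables $a_j := s_{j-1}-s_j$, the second coordinate of $\Phi^{2D}$ becomes $x_0-\sum_{j=1}^D a_j\gamma(t_j)$, subject to the scalar constraint $\sum_{j=1}^D a_j = s_0 - s_D$. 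Since the $s_j$'s are recovered from the $a_j$'s by partial summation, counting preimages reduces to counting ordered tuples $((t_j,a_j))_{j=1}^D$ with distinct $t_j$'s and nonzero $a_j$'s satisfying $\sum a_j\gamma(t_j)=v$ and $\sum a_j=c$ for given data $(v,c)$.

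The heart of the argument is a moment uniqueness lemma: two such tuples producing the same $(v,c)$ agree as unordered multisets. Setting $\nu=\sum a_j\delta_{t_j}-\sum\tilde a_j\delta_{\tilde t_j}$ gives a signed measure with $\nu(I)=0$ and $\int\gamma\,d\nu=0$; if nontrivial, this provides a linear dependence $\sum_{i=1}^N c_i(1,\gamma(u_i))=0$ in $\R^{d+1}$ with $N\leq 2D=d+1$ distinct points and nonzero coefficients. Such a dependence contradicts the hypothesis $J_\gamma\neq 0$ on ordered $d$-tuples: applying Rolle's theorem iteratively to the Wronskian-type determinant $\det(1,\gamma(u_j))_{j=1}^{d+1}$ evaluated on an ordered arrangement of the $u_j$'s reduces a value-level vanishing to a derivative-level vanishing, $\det(\gamma'(\xi_j))_{j=1}^d = 0$ at some ordered tuple $\xi_1<\cdots<\xi_d$, which is precisely excluded by the hypothesis.

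Once multiset uniqueness holds, the number of ordered preimages is controlled by the number of permutations of the $t$-variables in the tuple, which is at most $(D+1)!$ in every case (the bound is tight for $\Phi^{2D+1}$ and loose by a factor of $D+1$ for the others). The $\Psi^{d+1}$ maps are handled identically after substituting $b_j := s_j - s_{j+1}$ (with appropriate modifications at the endpoints), producing a moment equation $\sum b_j\gamma(t_j)=v$ with constraint $\sum b_j = 0$. The main technical obstacle is the iterated Rolle step in the moment uniqueness lemma: the hypothesis $J_\gamma\neq 0$ directly controls only derivative-level behavior of $\gamma$, whereas moment uniqueness needs value-level linear independence of the augmented vectors $(1,\gamma(u_i))$, and bridging this gap via the standard inductive Rolle argument is where the bulk of the work lies.
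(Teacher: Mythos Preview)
Your proposal is correct and follows essentially the same route as the paper: reduce to counting solutions of a moment equation $\sum a_j\gamma(t_j)=v$ subject to $\sum a_j=c$, show that two distinct ordered solutions force a nontrivial relation $\sum_j v_j(1,\gamma(w_j))=0$ with at most $d+1$ distinct nodes, and derive a contradiction with the hypothesis on $J_\gamma$. The only substantive difference is in the last step. The paper converts the column relation via Abel summation, $\alpha_k=\sum_{j\le k} v_j$, to a dependence among the differences $\gamma(w_j)-\gamma(w_{j+1})$, and then uses the fundamental theorem of calculus and multilinearity to write
\[
0=\det\bigl(\gamma(w_2)-\gamma(w_1),\ldots,\gamma(w_{d+1})-\gamma(w_d)\bigr)=\int_{w_1}^{w_2}\!\cdots\!\int_{w_d}^{w_{d+1}} J_\gamma(s_1,\ldots,s_d)\,ds,
\]
which contradicts the single-signedness of $J_\gamma$ on the ordered simplex. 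Your ``iterated Rolle'' alternative also works, but be careful with the mechanics: the clean version is to pass from column-dependence to row-dependence (the $(d{+}1)\times(d{+}1)$ matrix is singular, so there exists $(c_0,c)\neq 0$ with $c_0+c\cdot\gamma(u_j)=0$ for all $j$), then apply Rolle \emph{once} to the scalar function $\phi(t)=c_0+c\cdot\gamma(t)$, which has $d{+}1$ zeros, to find $\xi_1<\cdots<\xi_d$ with $c\cdot\gamma'(\xi_j)=0$ and hence $J_\gamma(\xi)=0$. A component-wise mean value argument on $\gamma(u_{j+1})-\gamma(u_j)$ would \emph{not} succeed, since the vector-valued MVT does not yield a common intermediate point. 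One minor correction: in every case the multiset argument actually gives at most $D!$ preimages (one of the $t$-slots is always pinned by the first output coordinate or the basepoint), so the stated $(D{+}1)!$ is uniformly loose; your remark that it is tight for $\Phi^{2D+1}$ overlooks that $t_{D+1}$ is read off from the image.
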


We note in particular that the hypotheses are satisfied whenever $\gamma$ is a $C^d$ curve satisfying the geometric inequality \eqref{E:convl GI} on $I^d$ and $L_\gamma \neq 0$ on $I$.  

We will prove Propositions~\ref{P:jacobian bounds} and~\ref{P:one to one} in Section~\ref{S:gi proofs}.  Assuming their validity for now, we outline the changes that must be made to the proof of Proposition~\ref{P:RWT} in order to establish Proposition~\ref{P:RWTXray}.  By a simple arithmetic argument, the key step in the proof by refinements is the following.

\begin{lemma} \label{L:param setXray}
If $d+1=2D \ge 4$ is even, then there exist a point $(t_0,y_0) \in F$ and a Borel set $\Omega \subset \R^{d+1}$ such that 
$$
\int_\Omega \prod_{i=0}^{D} \lambda_\gamma(t_i)\, dt_D\, ds_D \ldots dt_1\, ds_1 \gtrsim \alpha^D \beta^D
$$
and such that if $(s_1,t_1,\ldots,s_D,t_D) \in \Omega$, then $0 < t_0 < t_1 < \cdots < t_D < 1$, $s_1 < \cdots < s_D$ and, for $2\le i\le D$,
\begin{gather*}
\Psi^{d+1}_{(t_0,y_0)}(\Omega) \in F, \quad \lambda_\gamma(t_{i-1}) (s_i -s_{i-1}) \gtrsim \alpha,\\ 
\int_0^{t_0} \lambda_\gamma(t)\, dt \gtrsim \beta, \quad \int_{t_0}^{t_1} \lambda_\gamma(t)\, dt \gtrsim \beta, \quad \int_{t_{i-1}}^{t_i} \lambda_\gamma(t)\, dt \gtrsim \beta.
\end{gather*}
If $d+1=2D+1 \ge 3$ is odd, then there exist a point $(s_0,x_0) \in F$ and a Borel set $\Omega \subset \R^{d+1}$ such that 
$$
\int_\Omega \prod_{i=1}^{D+1} \lambda_\gamma(t_i)\,dt_{D+1}\, ds_D\,dt_D\ldots ds_1\, dt_1 \gtrsim \alpha^D \beta^{D+1}
$$
and such that if $(t_1,s_1,\ldots,t_D,s_D,t_{D+1}) \in \Omega$, then $0 < t_1 < t_2 < \cdots < t_{D+1} < 1$, $s_0 < \cdots < s_D$, and for $2\le i\le D$,
\begin{gather*}
\Phi^{d+1}_{(s_0,x_0)}(\Omega) \in F, \quad \lambda_\gamma(t_1) (s_1-s_0) \gtrsim \alpha, \quad \lambda_\gamma(t_i) (s_i -s_{i-1}) \gtrsim \alpha,\\ 
\int_0^{t_1} \lambda_\gamma(t)\, dt \gtrsim \beta, \quad \int_{t_{i-1}}^{t_i} \lambda_\gamma(t)\, dt \gtrsim \beta.
\end{gather*}
\end{lemma}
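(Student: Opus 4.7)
The plan is to follow the same strategy as in Lemma~\ref{L:param set}, now applied to the incidence relation of the $X$-ray correspondence. We set
$$
U = \{((t,x),(s,y)) \in F \times E : y = x + s\gamma(t)\},
$$
equipped with $d\mu = \lambda_\gamma(t)^{1/q_d}\, dt\, dx\, ds$, so that $\mu(U) = \langle \lambda_\gamma^{1/q_d} X_\gamma \chi_E, \chi_F\rangle$; define
$$
\alpha = \frac{\langle \lambda_\gamma^{1/q_d} X_\gamma \chi_E, \chi_F\rangle}{|F|}, \qquad \beta = \frac{\langle \lambda_\gamma^{1/q_d} X_\gamma \chi_E, \chi_F\rangle}{|E|}.
$$

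In the even case $d+1 = 2D$, we pick a base point $(t_0,y_0)\in F$ and inductively construct a nested sequence $U_0 \supset U_1 \supset \cdots \supset U_{d+1}$ by alternating $s$- and $t$-refinements, mirroring the walk underlying $\Psi^{d+1}_{(t_0,y_0)}$. At each level, an $s$-refinement seeks a new $s_i > s_{i-1}$ with $(s_i, y_{i-1} + s_i\gamma(t_{i-1})) \in E$ (contributing a factor of $\alpha$), and a $t$-refinement seeks a new $t_i > t_{i-1}$ with $(t_i, y_{i-1} + s_i(\gamma(t_{i-1}) - \gamma(t_i))) \in F$ (contributing a factor of $\beta$). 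Each is carried out as the same \emph{double} refinement used in Lemma~\ref{L:param set}: first we discard base points where the total marginal mass is below a fixed fraction of the expected value, then we further discard those for which more than half the remaining mass lies to the left of the current parameter, forcing $s_{i-1} < s_i$ or $t_{i-1} < t_i$. After $d+1$ double refinements, $U_{d+1}$ is nonempty; picking any point in it and retracing backward produces the desired $\Omega$ together with $\Psi^{d+1}_{(t_0,y_0)}|_\Omega \subset F$. The odd case $d+1 = 2D+1$ is treated identically, walking along $\Phi^{d+1}_{(s_0, x_0)}$ from $(s_0, x_0) \in F$ with the parity of the initial and final steps reorganized.

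The main new subtlety compared to Lemma~\ref{L:param set} is the presence of two distinct families of parameters --- the $s_i$ moving under Lebesgue measure and the $t_i$ under $\lambda_\gamma^{1/q_d}\, dt$ --- and the need to enforce a monotonic ordering in both. Because the two directions are independent at each step, the double-refinement argument carries over cleanly. The remaining task is to recast the $\lambda_\gamma^{1/q_d}\, dt$-bounds produced by the refinement as the $\lambda_\gamma\, dt$-bounds appearing in the statement; here an analogue of Lemma~\ref{L:foh} applied to the monotone sequence $t_{i-1} < t_i$ under log-concavity of $\lambda_\gamma$ gives the conversion, while the assertion $\lambda_\gamma(t_{i-1})(s_i - s_{i-1}) \gtrsim \alpha$ simply records the Lebesgue $s$-step size after absorbing the weight $\lambda_\gamma(t_{i-1})^{1/q_d}$ at $t_{i-1}$. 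The principal obstacle is this bookkeeping of weights and orderings across alternating refinements, but no conceptually new ingredient beyond those appearing in Lemma~\ref{L:param set} is required.
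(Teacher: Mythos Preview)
Your overall architecture—set up an incidence set, perform alternating double refinements to force monotone ordering of both the $s$- and $t$-parameters, then unwind from a surviving point—matches the paper's. The paper's sketch defines
$$
U = \{(s,t,y) \in \R \times I \times \R^d : (s,y+s\gamma(t)) \in E,\ (t,y) \in F\}
$$
and carries out the same even/odd alternation of refinements you describe.

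The substantive difference is the weight you put on $U$. You work with $d\mu = \lambda_\gamma(t)^{1/q_d}\, dt\, dx\, ds$, whereas the paper uses $\lambda_\gamma(t)\, ds\, dt\, dy$ throughout: the pairing is written as $\int_U \lambda_\gamma(t)\, ds\, dt\, dy$, and the bad sets at each stage are thresholded against $\beta$ (in the $t$-direction, with weight $\lambda_\gamma(\tau)\, d\tau$) and $\alpha$ (in the $s$-direction, with weight $\lambda_\gamma(t)\, d\sigma$, i.e.\ a constant times Lebesgue in $\sigma$). With this choice the conclusions $\int_{t_{i-1}}^{t_i}\lambda_\gamma\, dt \gtrsim \beta$ and $\lambda_\gamma(t_{i-1})(s_i - s_{i-1}) \gtrsim \alpha$ fall out of the refinement \emph{directly}, with no conversion step at all. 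In particular, the paper's $\alpha,\beta$ are defined with the full weight $\lambda_\gamma$, not $\lambda_\gamma^{1/q_d}$; the fractional power only reappears when one assembles the final restricted weak type inequality from the lemma.

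This makes your last paragraph the weak point. Your refinement would yield $\int_{t_{i-1}}^{t_i}\lambda_\gamma^{1/q_d}\, dt \gtrsim \beta$ and $\lambda_\gamma(t_{i-1})^{1/q_d}(s_i - s_{i-1}) \gtrsim \alpha$, with your $\alpha,\beta$. Lemma~\ref{L:foh} does not convert between powers of $\lambda_\gamma$; it records concavity of $\lambda_\gamma\circ h$ and is used later (in the deduction of \eqref{E:RWT equiv} from Lemma~\ref{L:param set}) to compare $\lambda_\gamma$ at nearby points, not to change exponents. So the ``recasting'' you propose is not actually supplied by that lemma, and your $\alpha,\beta$ do not coincide with those in the statement. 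The fix is simply to run the refinement with the weight $\lambda_\gamma(t)\, ds\, dt\, dy$ from the outset; then no conversion is needed and the rest of your outline goes through as written.
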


\begin{proof}[Sketch of proof]
Lemma \ref{L:param setXray} may be proved similarly to Lemma~\ref{L:param set}, but with some adjustments.  Define
$$
U := \{(s,t,y) \in \R \times I \times \R^d : (s,y+s\gamma(t)) \in E, \: (t,y) \in F\}.
$$
Then 
$$
\langle X_\gamma\chi_E,\chi_F \rangle = \int_U \lambda_\gamma(t)\, ds\, dt\, dy.
$$
We define 
\begin{align*}
B_0 &:= \{(s,t,y) \in U : t < h(\tfrac12\beta)\}\\
U_0 &:= U\setminus B_0.
\end{align*}
It is easy to check that $\int_{U_0}\lambda_\gamma(t)\, ds\, dt\, dy \geq \tfrac12 \int_U \lambda_\gamma(t)\, ds\, dt\, dy$.  

We will again iteratively construct a sequence of sets $U_0 \supset U_1 \supset U_2 \supset \cdots$.  Assume that $U_k$ satisfying $\int_{U_k} \lambda_\gamma(t)\, ds\, dt\, dy \geq \tfrac1{4^{k+1}}\int_U \lambda_\gamma(t)\, ds\, dt\, dy$ is given.  We define bad sets to be excised.  If $k$ is even, these are
\begin{align*}
B_{k+1} &= \{(s,t,y) \in U_k : \int_0^1 \chi_{U_k}(s,\tau,y+s\gamma(t)-s\gamma(\tau))\, \lambda_\gamma(\tau)\, d\tau < \tfrac1{2^{2k+3}}\beta\}\\
B_{k+1}' &= \{(s,t,y)\in U_k\setminus B_{k+1} : \int_t^1 \chi_{U_k}(s,\tau,y+s\gamma(t)-s\gamma(\tau))\, \lambda_\gamma(\tau)\, d\tau < \tfrac1{2^{2k+4}}\beta\}.
\end{align*}
If $k$ is odd, we define
\begin{align*}
B_{k+1} &= \{(s,t,y) \in U_k : \int_0^1 \chi_{U_k}(\sigma,t,y)\, \lambda_\gamma(t)\, d\sigma < \tfrac1{2^{2k+3}}\alpha\}\\
B_{k+1}' &= \{(s,t,y) \in U_k\setminus B_{k+1} : \int_s^1 \chi_{U_k}(\sigma,t,y)\, \lambda_\gamma(t)\, d\sigma < \tfrac1{2^{2k+3}}\alpha\}.
\end{align*}
In either case, the next set is $U_{k+1} := U_k \setminus (B_{k+1}\cup B_{k+1}')$.  It may be verified as in the proof of Lemma~\ref{L:param set} that $U_{k+1}$ satisfies the inductive hypothesis $\int_{U_{k+1}}\lambda_\gamma(t)\, ds\, dt\, dy \geq \tfrac1{4^{k+2}} \int_U\lambda_\gamma(t)\, ds\, dt\, dy$.  

We leave the remaining details to the reader.
\end{proof}

The rest of the proof of Proposition \ref{P:RWTXray} uses Lemma \ref{L:foh} and the same argument as the proof of Proposition \ref{P:RWT}.

\section{A simple interpolation argument} \label{S:interpolation}

In this section, we show how restricted weak type endpoint bounds for differently weighted operators may be deduced from the restricted weak type endpoint bounds operators with affine arclength measure.  We work with restricted weak type estimates;  somewhat related arguments may be found in \cite{GSW, RicciStein}.  The general form of our result follows.  

\begin{proposition} \label{P:interpolation}
Let $I$ be an interval and let $\gamma \in C^d_{\rm{loc}}(I;\R^d)$, and assume that $\{t \in I : L_\gamma(t) = 0\}$ has measure 0.  If $\langle T_\gamma \chi_E,\chi_F\rangle \leq C |E|^{\frac1{p_d}}|F|^{1-\frac1{q_d}}$ for Borel sets $E,F \subset \R^d$, then
$$
\langle T_\gamma^\theta \chi_E,\chi_F \rangle \lesssim_{C,\theta} |E|^{\frac\theta{p_d}} |F|^{1-\frac\theta{q_d}}, \qquad 0 < \theta \leq 1,
$$
where 
\begin{equation} \label{E:T theta}
T_\gamma^\theta f(x) := \int_I f(x-\gamma(t)) |\phi_\gamma(t)|^{\theta-1}\lambda_\gamma(t)\, dt, 
\end{equation}
and $\phi_\gamma(t) := \int_{t_0}^t\lambda_\gamma(s)\, ds$ for some arbitrary $t_0 \in I$.  
\end{proposition}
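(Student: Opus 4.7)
The plan is to reparametrize the curve by affine arclength, thereby converting $T_\gamma^\theta$ into a fractional-type convolution along the reparametrized curve, and then interpolate between the affine arclength hypothesis and a trivial truncation bound via a dyadic decomposition of the singular weight.

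Since $\{t\in I : L_\gamma(t)=0\}$ has measure zero, $\lambda_\gamma > 0$ almost everywhere, so $\phi_\gamma$ is a strictly increasing, continuous bijection from $I$ onto an interval $J \ni 0$. The substitution $u = \phi_\gamma(t)$, $du = \lambda_\gamma(t)\,dt$, together with $\tilde\gamma := \gamma\circ\phi_\gamma^{-1}$, recasts \eqref{E:T theta} as
$$
T_\gamma^\theta f(x) = \int_J f(x-\tilde\gamma(u))\,|u|^{\theta-1}\,du.
$$
Since the restricted weak type hypothesis is monotone in the domain of integration, for every measurable $J'\subseteq J$ the operator $T^{J'}f(x) := \int_{J'} f(x-\tilde\gamma(u))\,du$ satisfies $\langle T^{J'}\chi_E,\chi_F\rangle \le C|E|^{1/p_d}|F|^{1-1/q_d}$, and in parallel one has the trivial bound $\langle T^{J'}\chi_E,\chi_F\rangle \le |J'|\min(|E|,|F|)$.

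For $\theta < 1$ (the case $\theta = 1$ being the hypothesis), partition $J$ into dyadic shells $J_j := \{u\in J : 2^j \le |u| < 2^{j+1}\}$, $j\in\Z$, on which $|u|^{\theta-1}\sim 2^{j(\theta-1)}$ and $|J_j|\lesssim 2^j$. Thus
$$
\langle T_\gamma^\theta\chi_E,\chi_F\rangle \lesssim \sum_{j\in\Z} 2^{j(\theta-1)}\,\langle T^{J_j}\chi_E,\chi_F\rangle,
$$
and on each shell I apply whichever of the two bounds above is smaller. Splitting the sum at a level $2^{j_0} = N$ to be determined: since $\theta > 0$, the small shells ($2^j \le N$) contribute (via the trivial bound) a geometric sum $\lesssim_\theta N^\theta\min(|E|,|F|)$, while since $\theta - 1 < 0$, the large shells ($2^j > N$) contribute (via the affine arclength bound) $\lesssim_\theta N^{\theta-1}C|E|^{1/p_d}|F|^{1-1/q_d}$.

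Balancing the two by choosing $N \sim C|E|^{1/p_d}|F|^{1-1/q_d}/\min(|E|,|F|)$ yields
$$
\langle T_\gamma^\theta\chi_E,\chi_F\rangle \lesssim_{C,\theta}\min(|E|,|F|)^{1-\theta}\bigl(C|E|^{1/p_d}|F|^{1-1/q_d}\bigr)^\theta,
$$
and estimating $\min(|E|,|F|) \le |F|$ and gathering exponents produces $|E|^{\theta/p_d}|F|^{1-\theta/q_d}$ as claimed. The only real subtlety is to set up the reparametrization cleanly: the zero-measure assumption on $\{L_\gamma=0\}$ ensures that $\phi_\gamma$ is a genuine bijection so that the change of variables is valid, and monotonicity of the bilinear form in the domain of integration transfers the restricted weak type hypothesis to all subintervals. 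Convergence of the small-shell geometric sum relies on $\theta > 0$, while convergence of the large-shell sum relies on $\theta < 1$, which is consistent with the remark in the introduction that the limiting operator at $\theta = 0$ is an unbounded one-sided Hilbert transform.
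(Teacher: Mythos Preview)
Your proof is correct and follows essentially the same approach as the paper: reparametrize by affine arclength so that the weight becomes $|u|^{\theta-1}$, perform a dyadic decomposition in $|u|$, and on each shell take the minimum of the assumed restricted weak type bound and the trivial $|J_j|\cdot|F|$ bound, then sum. The only cosmetic differences are that the paper reduces to $I\subseteq[0,\infty)$ rather than using two-sided shells, and uses $|F|$ directly in the trivial bound rather than $\min(|E|,|F|)$ (which you discard anyway).
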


The proof is elementary.

\begin{proof}  We may assume that $t_0=0\in I \subseteq [0,\infty)$.  Since \eqref{E:T theta} is completely parametrization invariant, we reparametrize by $\phi_\gamma^{-1}$, so $\lambda_\gamma \equiv 1$.  Thus
$$
T_\gamma^\theta f(x) = \int_I f(x-\gamma(t)) \, \tfrac{dt}{t^{1-\theta}}.
$$
Let $I_n = I \cap [2^n,2^{n+1}]$ and $T_n f(x) = \int_{I_n} f(x-\gamma(t))\, dt$.  Then
\begin{align*}
\langle T_\gamma^\theta \chi_E,\chi_F\rangle &\leq 2 \sum_n 2^{-n(1-\theta)} \langle T_n \chi_E,\chi_F\rangle\\
&\leq 2\sum_n \min\{C 2^{-n(1-\theta)} |E|^{\frac1{p_d}}|F|^{1-\frac1{q_d}}, 2^n|F|\}\\
&\lesssim_{C,\theta} |E|^{\frac\theta{p_d}}|F|^{1-\frac\theta{q_d}}.
\end{align*}
\end{proof}

A similar result can easily be deduced for the restricted X-ray transform.  This raises the question of what the natural interpolants would be in the general translation non-invariant case considered in \cite{BSmult, TW}, and whether it is possible to obtain the unweighted estimates from the weighted ones using a similarly simple interpolation argument.

\subsection*{Examples} 
Let $\gamma(t) = (t^{a_1}\theta_1(t),\ldots,t^{a_d}\theta_d(t))$ be a monomial-like curve as described in \ref{SS:ST} of the introduction, and let $A=\sum_i a_i$.  It is proved in \cite{DMtams} (cf.\ Proposition~\ref{P:convl exp gi} in this article) that there exists $\tau \in \R$ such that the exponential parametrization
$$
\Gamma(t) = \gamma(e^{-t}) = (e^{-a_1 t}\Theta_1(t),\ldots,e^{-a_dt}\Theta_d(t))
$$
satisfies the following for all $t \in (\tau,\infty)$ and $(t_1,\ldots,t_d) \in (\tau,\infty)^d$:
\begin{gather*}
|L_\Gamma(t)| \sim c(a,\theta) e^{-At}, \qquad c(a,\theta) := \prod_{i=1}^d |a_i \theta_i(0)| \prod_{i<j}(a_j-a_i)\\
|J_\Gamma(t_1,\ldots,t_d)| \gtrsim \prod_{j=1}^d |L_\gamma(t_j)|^{\frac1d} \prod_{i<j} |t_j-t_i|,
\end{gather*}
with implicit constants depending only on $d$.  By Proposition~\ref{P:RWT}, we obtain endpoint restricted weak type estimates with implicit constants depending only on $d$.  By the above interpolation argument we therefore obtain restricted weak type endpoint bounds for
\begin{align*}
T_\gamma^\theta f(x) = \int_0^{e^{-\tau}} f(x-\gamma(t))\, \lambda_\gamma(t)^\theta \bigl(\tfrac{A}t\bigr)^{1-\theta}\, dt\qquad \text{if $A\neq 0$}\\
T_\gamma^\theta f(x) = \int_0^{e^{-\tau}} f(x-\gamma(t))\, \lambda_\gamma(t)^\theta (t|\log t|)^{\theta-1}\, dt\qquad \text{if $A=0$},
\end{align*}
with constants depending only on $d$.  

We will give strong type endpoint bounds for these operators in the next section, but those obtained by interpolation are more uniform in the case $A \neq 0$ (they are actually weaker in the case $A=0$).  In the special case $A\geq \tfrac{d^2+d}2$, $\theta = \tfrac{d^2+d}{2A}$ we recover the restricted weak type endpoint bounds for the unweighted operator.  (In the case of monomial curves with positive integer powers, the estimates below the unweighted endpoint were already seen in \cite{GressMonomial}, with an additional dependence on the degree.)

Let $\gamma(t) = (e^{-1/t},te^{-1/t})$.  For sufficiently small $\delta$, $\gamma$ satisfies the hypotheses of Proposition~\ref{P:RWT} on $(0,\delta)$, so the restricted weak type estimate \eqref{E:RWT} holds.  We compute
$$
T^\theta_\gamma  f(x) \sim \int_0^\delta f(x-\gamma(t)) t^{\frac23 \theta-2}e^{-\frac{2\theta}{3t}} dt,
$$
for $f \geq 0$, and by the proposition, we can bound this for $0 < \theta \leq 1$.  Reparametrizing, $\tilde \gamma(s) = (s,(\log s^{-1})^{-1} s)$ and 
$$
T^\theta_\gamma f(x) = \int_0^{e^{-1/\delta}} f(x-\tilde\gamma(s))\, (\log s^{-1})^{-2\theta/3}s^{2\theta/3-1}\, ds.
$$
This parametrization is somewhat more natural from a geometric viewpoint since $\tilde\gamma'(0) \neq 0$.  

Now we consider the opposite extreme.  The curve $\gamma(t) = (t,e^{-\frac1t})$ is infinitely flat at 0 (and may be viewed as a limiting case for the sequence of curves $(t,t^n)$).  For $f \geq 0$, we estimate
$$
T^\theta_\gamma f(x) \sim \int_0^\delta f(x-\gamma(t))\, t^{\frac{2\theta}3 -2}e^{-\frac\theta t}\, dt.
$$
By Propositions~\ref{P:RWT} and~\ref{P:convl gi}, $T^1_\gamma$ does satisfy the restricted weak type estimate \eqref{E:RWT} for sufficiently small $\delta$.  By the proposition, we can bound $T^\theta_\gamma$ for all $0 < \theta \leq 1$.   

In the limiting case $\theta \searrow 0$, $p_\theta = q_\theta = \infty$ and $q_\theta'=p_\theta'=1$.  For certain curves, the limiting operator is the one-sided (and hence ill-defined) Hilbert transform, but it seems interesting that other, even more singular integrals arise in this way.

\section{Strong type bounds}
\label{S:ST}

We now turn to the proof of Theorem \ref{T:convl st}.  Recall that $\gamma(t) = (t^{a_1}\theta_1(t),\ldots,\linebreak t^{a_d}\theta_d(t))$, $0 < t < T$, for nonzero (but possibly negative) real numbers $a_1 < \cdots < a_d$ and $\theta_i \in C^d_{\rm{loc}}((0,T))$, satisfying 
$$
\lim_{t \searrow 0} \theta_i(t) = \theta_i(0) \in \R\setminus\{0\}, \qquad \lim_{t \searrow 0} t^m \theta_i^{(m)} (t) = 0, \quad 1 \leq i,m \leq d.
$$
We use the exponential parametrization from \cite{DMtams}.  Set 
$$
\Gamma(t) = \gamma(e^{-t}), \qquad \Theta_i(t) = \theta_i(e^{-t}), \qquad \Theta_i(\infty) = \theta_i(0).
$$
Changing variables and setting $\tau = -\log\delta$,
\begin{equation} \label{E:exp T}
T_\gamma^\theta f(x) = \int_\tau^\infty f(x-\Gamma(t))\lambda_\Gamma(t)^\theta\, dt.
\end{equation}
Our main tool in bounding this operator will be the following geometric inequality.  

\begin{proposition} \label{P:convl exp gi}
There exists a constant $c_d >0$ such that for each $\gamma$ as above, there exists $\tau=\tau_\gamma \in \R$ such that 
\begin{equation} \label{E:LGamma}
| L_\Gamma(t)| \sim e^{-t\sum_{j=1}^d a_i} \prod_{j=1}^d|a_j \Theta_j(\infty)| \prod_{1 \leq i < j \leq d}(a_j-a_i)
\end{equation}
on $[\tau,\infty)$ and such that for all $(t_1,\ldots,t_d) \in (\tau,\infty)^d$, 
\begin{equation} \label{E:convl exp gi}
|J_\Gamma(t_1,\ldots,t_d)| \gtrsim \prod_{i=1}^d |L_\Gamma(t_i)|^{\frac1d}\prod_{1 \leq i < j \leq d} (|t_i-t_j|e^{c_d(a_d-a_1)|t_i-t_j|}).
\end{equation}
The implicit constants in \eqref{E:LGamma} and \eqref{E:convl exp gi} depend only on $d$.  
\end{proposition}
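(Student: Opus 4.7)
The plan is to exploit that as $t\to\infty$, $\Gamma$ is a small perturbation of the pure exponential curve $\tilde{\Gamma}(t):=(e^{-a_jt}\Theta_j(\infty))_{j=1}^d$. Indeed, by the chain rule one may express $\Theta_j^{(m)}(t)$ as a polynomial in the quantities $s^k\theta_j^{(k)}(s)|_{s=e^{-t}}$, $1\le k\le m$, so the hypotheses \eqref{E:thetas} translate to $\Theta_j(t)\to\Theta_j(\infty)\ne 0$ and $\Theta_j^{(m)}(t)\to 0$ for $m\ge 1$ as $t\to\infty$. For $\tau$ sufficiently large, every entry of the matrices appearing in $L_\Gamma(t)$ and $J_\Gamma(t_1,\ldots,t_d)$ agrees with its pure-exponential counterpart up to a factor $1\pm\epsilon$, so by multilinearity of the determinant it suffices to prove \eqref{E:LGamma} and \eqref{E:convl exp gi} for $\tilde{\Gamma}$ in place of $\Gamma$; a short perturbation argument transfers the bound back to $\Gamma$.

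For \eqref{E:LGamma}, the $(j,m)$ entry of $(\tilde{\Gamma}^{(m)}(t))$ is $(-a_j)^m\Theta_j(\infty)e^{-a_jt}$; factoring $e^{-a_jt}\Theta_j(\infty)$ and then $(-a_j)$ out of row $j$ leaves the classical Vandermonde $((-a_j)^{m-1})_{j,m}$, whose determinant has absolute value $\prod_{i<j}(a_j-a_i)$. This yields \eqref{E:LGamma}.

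For \eqref{E:convl exp gi}, after factoring $\prod_j|a_j\Theta_j(\infty)|$ out of the rows of $(\tilde{\Gamma}'(t_k))_{j,k}$, the problem reduces to a lower bound on the generalized exponential Vandermonde $V(a,t):=\det(e^{-a_jt_k})_{j,k=1}^d$ of the form
\[
|V(a,t)|\gtrsim\prod_{i<j}(a_j-a_i)\cdot e^{-A\bar t}\cdot\prod_{i<j}|t_i-t_j|e^{c_d(a_d-a_1)|t_i-t_j|},
\]
where $A=\sum_i a_i$ and $\bar t$ is a suitable average of the $t_k$'s chosen so that $e^{-A\bar t}$ absorbs $\prod_k|L_{\tilde{\Gamma}}(t_k)|^{1/d}$. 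The base case $d=2$ follows from
\[
\det\begin{pmatrix}e^{-a_1t_1}&e^{-a_1t_2}\\ e^{-a_2t_1}&e^{-a_2t_2}\end{pmatrix}=-2e^{-(a_1+a_2)(t_1+t_2)/2}\sinh\bigl(\tfrac{(a_2-a_1)(t_2-t_1)}{2}\bigr)
\]
together with the elementary bound $\sinh(x)/x\gtrsim_\epsilon e^{(1-\epsilon)x}$ on $[0,\infty)$; taking for instance $\epsilon=1/4$ gives $c_2=1/4$, independent of $a_1,a_2$.

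For the inductive step, WLOG $t_1<\cdots<t_d$. Factoring $e^{-a_1t_k}$ from column $k$, subtracting column 1 from the remaining columns, and applying the identity $e^{-(a_j-a_1)t_k}-e^{-(a_j-a_1)t_1}=-(a_j-a_1)\int_{t_1}^{t_k}e^{-(a_j-a_1)s}\,ds$, followed by the column operations $C_k\mapsto C_k-C_{k-1}$ that replace the nested integration domains by disjoint intervals, yields by multilinearity
\[
|V(a,t)|=e^{-a_1\sum_k t_k}\prod_{j\ge 2}(a_j-a_1)\int_{t_1}^{t_2}\!\!\cdots\int_{t_{d-1}}^{t_d}|\det(e^{-(a_j-a_1)s_l})_{j,l=2}^d|\,ds_2\cdots ds_d.
\]
The inductive hypothesis bounds the inner $(d-1)\times(d-1)$ determinant from below, and the outer integrals are then estimated by $\sinh$-type bounds to produce the pairwise exponential factors. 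The principal obstacle is combinatorial: the inductive hypothesis naturally produces factors $e^{c_{d-1}(a_d-a_2)|s_i-s_j|}$ on the inner variables, so one must combine them with the outermost $\sinh$-factor and carefully choose $c_d<c_{d-1}$ so that, for every pair $(i,j)$, the combined exponent upgrades from $a_d-a_2$ to $a_d-a_1$ while the averaged weight $\bar t$ falls into place.
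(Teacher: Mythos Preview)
Your perturbation argument in the first paragraph has a genuine gap. Entry-wise agreement up to $1\pm\epsilon$ does \emph{not} transfer via multilinearity to a comparable bound on $J_\Gamma$: when the $t_k$ are close together, $|V(a,t)|=|\det(e^{-a_jt_k})|$ is smaller than a typical term in its Leibniz expansion by a factor $\prod_{i<j}|t_i-t_j|$, so the $O(\epsilon)$ cross terms swamp the main term. Concretely in $d=2$, for $t_1<t_2$ one has $|V|\sim(a_2-a_1)(t_2-t_1)\,e^{-a_1t_2-a_2t_1}$ while the two individual terms are $\sim e^{-a_1t_2-a_2t_1}$; perturbing each entry by $1\pm\epsilon$ therefore produces an error $\sim\epsilon\,e^{-a_1t_2-a_2t_1}$, which dominates $|V|$ once $(a_2-a_1)(t_2-t_1)<\epsilon$. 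Since $\tau$ must be chosen \emph{before} the $t_k$, this cannot be fixed by taking $\epsilon$ smaller. (One can rescue $d=2$ by also invoking $\eta_j'=o(1)$ and the mean value theorem, but in higher $d$ that amounts to controlling all divided differences of the $\eta_j$, which is no longer a ``short perturbation argument.'')

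The paper avoids this by never comparing the Jacobian determinants directly. It uses the recursive identity
\[
J_\Gamma^k(t_1,\ldots,t_k)=\prod_{i=1}^k A_\Gamma^k(t_i)\int_{t_1}^{t_2}\!\!\cdots\!\!\int_{t_{k-1}}^{t_k}J_\Gamma^{k-1}(s)\,ds,\qquad A_\Gamma^k=\frac{L_\Gamma^{d-k-1}L_\Gamma^{d-k+1}}{(L_\Gamma^{d-k})^2},
\]
in which the only curve-dependent data are the single-variable quantities $L_\Gamma^j(t)$. For those your Vandermonde computation \emph{is} valid, giving $A_\Gamma^k(t)\sim C_k e^{-b_k t}$; positivity of the integrand then propagates the $\sim$ through the recursion, reducing matters to pure-exponential integrals $I^k$. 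The paper bounds $I^k$ for $k<d$ \emph{without} any exponential gain (citing \cite{DMtams}), and extracts the factor $e^{c_d(a_d-a_1)|t_i-t_j|}$ only at the final step $k=d$, via the elementary inequality $|e^x-e^{-x}|\ge(1-\theta)|x|\,e^{\theta|x|}$ applied to a single layer of integration over carefully placed subintervals. Your inductive scheme on $d$ is a reasonable alternative route for the pure-exponential model, but the step you flag as ``the principal obstacle'' --- upgrading $a_d-a_2$ to $a_d-a_1$ while simultaneously managing the averaged weight $\bar t$ --- is precisely the content of the estimate and is not carried out.
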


This represents a small improvement over the corresponding lower bound in \cite{DMtams} because of the presence of the exponential term on the far right of \eqref{E:convl exp gi}.  Though slight, this extra growth will be essential for our argument.  We will prove this proposition in Section~\ref{S:gi proofs}.  Now we concentrate on the proof of Theorem~\ref{T:convl st}, to which we devote the remainder of the section.  

By rescaling $t$ in \eqref{E:exp T} and using the parametrization invariance, we may assume that $\sum_{i=1}^d a_i = A \in  \{-\frac{d(d+1)}2,0,\frac{d(d+1)}2\}$ and that if $A=0$, $a_d-a_1=1$.  This assumption will remain in force for the remainder of the section.  The quantity $\tfrac A{a_d-a_1}$ is invariant under rescaling, so our goal is now to prove \eqref{E:convl st} with implicit constants depending on $d$, $\theta$, and an upper bound for $(a_d-a_1)^{-1}$.

For $\tau \leq j+1 \in \Z$ and $0 \leq \theta \leq 1$, we define
$$
T_j^\theta f(x) = \int_j^{j+1} f(x-\Gamma(t)) \lambda_\Gamma(t)^\theta \, dt,
$$
with the natural modification (which we will gloss over) when $j \leq \tau \leq j+1$.  

Using an extended method of refinements (in the spirit of \cite{ChQex}), we will show at the end of this section that Theorem~\ref{T:convl st} follows from the next two lemmas.  

\begin{lemma} \label{L:lbE0}
Let $E_1,E_2,F \subset \R^d$ be Borel sets and let $j_1,j_2 > \tau$ be integers.  For $i=1,2$, let $\beta_1 = \frac{\langle T^1_{j_1} \chi_{E_i},\chi_F\rangle}{|E_1|}$ and assume that $T_{j_i}^1 \chi_{E_i}(x) \geq \alpha_i$ for $x \in F$.  Then
\begin{equation}\label{E:lbE}
|E_2| \gtrsim e^{c_d(a_d-a_1)|j_1-j_2|} \alpha_1^{\frac{(d-1)^2}2 + \delta} \beta_1^{d-1}\alpha_2^{\frac{d+1}2-\delta},
\end{equation}
where $\delta = C_d^{-1}\min\{1,\tfrac{a_d-a_1}{|A|}\}$, and we interpret $\tfrac{a_d-a_1}0$ to be $+\infty$. The implicit constant may be taken to depend only on $d$.
\end{lemma}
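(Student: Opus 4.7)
The plan is to apply the method of refinements in the exponential parametrization to produce a $d$-parameter family of points in $E_2$, parametrized by a tuple $(s_1, \ldots, s_{d-1}, t_d)$ with $s_1 < \cdots < s_{d-1}$ in $[j_1, j_1+1]$ and $t_d \in [j_2, j_2+1]$, and then invoke the exponential geometric inequality of Proposition~\ref{P:convl exp gi} to convert this into a lower bound for $|E_2|$.  The exponential improvement $e^{c_d(a_d - a_1)|j_1 - j_2|}$ arises directly from the factor $e^{c_d(a_d - a_1)|t_i - t_j|}$ in that inequality applied to the $d-1$ pairs $(i, d)$, each of which automatically satisfies $|t_i - t_d| \geq |j_1 - j_2|$.

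The iteration itself follows the template of Lemma~\ref{L:param set}.  After the usual pigeonhole producing refinements $F' \subseteq F$ and $E_1' \subseteq E_1$ of comparable measure on which $T^1_{j_i}\chi_{E_i} \gtrsim \alpha_i$ for $i=1,2$ and $(T^1_{j_1})^*\chi_{F'} \gtrsim \beta_1$ hold, fix $x_0 \in F'$ and inductively construct $s_1 < \cdots < s_{d-1}$ in $[j_1, j_1+1]$ by alternately selecting the next $s_k$ so that the running point crosses $F' \to E_1'$ (contributing measure $\alpha_1$ to the selection) or $E_1' \to F'$ (contributing measure $\beta_1$), imposing the ordering $s_{k+1} > s_k$ exactly as in the proof of Proposition~\ref{P:RWT}.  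At the final running point in $F'$, use the pointwise hypothesis for $T^1_{j_2}$ to select $t_d \in [j_2, j_2+1]$ of $\lambda_\Gamma$-measure at least $\alpha_2$ with $\Phi_\Gamma(s_1, \ldots, s_{d-1}, t_d) \in E_2$.  Proposition~\ref{P:one to one} ensures essential injectivity of $\Phi_\Gamma$ on the resulting parameter set $\Omega$; changing variables and applying Proposition~\ref{P:convl exp gi} together with $|L_\Gamma|^{1/d} = \lambda_\Gamma^{(d+1)/2}$ and the mean-value / log-concavity estimates for consecutive pair-distances (Lemma~\ref{L:foh}, simpler here since $\lambda_\Gamma$ is exactly log-linear) yield, via direct counting, the naive bound
\[
|E_2| \gtrsim e^{(d-1)c_d(a_d - a_1)|j_1 - j_2|}\alpha_1^{(d-1)^2/2}\beta_1^{d-1}\alpha_2^{(d+1)/2}.
\]

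The main obstacle is producing the $\delta$-gain in the exponents, i.e., sharpening $\alpha_1^{(d-1)^2/2}\alpha_2^{(d+1)/2}$ to $\alpha_1^{(d-1)^2/2+\delta}\alpha_2^{(d+1)/2-\delta}$.  My expectation is that this comes from a small trade-off within the $(i,d)$ pair factors:  a $\delta$-fraction of each polynomial factor $|s_i - t_d|$ may be exchanged for additional exponential gain $e^{c'_d(a_d - a_1)|s_i - t_d|\delta}$, and on the set where $|s_i - t_d| \sim |j_1 - j_2|$ this can be converted, using that $\lambda_\Gamma$ varies at rate $2|A|/(d(d+1))$ between $[j_1, j_1+1]$ and $[j_2, j_2+1]$ (per~\eqref{E:LGamma}), into the desired shift $\alpha_1^\delta \alpha_2^{-\delta}$.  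The constraint that this conversion remain beneficial is precisely $\delta \lesssim \min\{1, (a_d - a_1)/|A|\}$, explaining the specific form of $\delta$.  The reduction from the $(d-1)$-factor exponential to the single-factor exponential $e^{c_d(a_d-a_1)|j_1-j_2|}$ in the conclusion provides the slack to absorb the associated constants.  Calibrating this trade-off is the most delicate point; the rest of the argument is bookkeeping modeled on the RWT proof and the strong-type arguments of \cite{BSjfa}.
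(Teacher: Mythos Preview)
Your overall architecture is right: build an ordered parameter set $\Omega_d \subset [j_1,j_1+1]^{d-1}\times[j_2,j_2+1]$ by refinements, map it into $E_2$, and apply the exponential geometric inequality.  Your naive bound
\[
|E_2| \gtrsim e^{(d-1)c_d(a_d-a_1)|j_1-j_2|}\,\alpha_1^{(d-1)^2/2}\beta_1^{d-1}\alpha_2^{(d+1)/2}
\]
is also correct.  The gap is in the $\delta$-shift, and it is a genuine one.

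The mechanism you describe---sacrificing part of the exponential surplus and using the rate of change of $\lambda_\Gamma$ between the two intervals to convert it into $(\alpha_1/\alpha_2)^\delta$---does not work, because nothing in your construction controls $\alpha_1/\alpha_2$.  In normalized variables $\tilde\alpha_i = w(a)^{-1}e^{\frac{2A}{d(d+1)}j_i}\alpha_i$, one has $\alpha_1/\alpha_2 = (\tilde\alpha_1/\tilde\alpha_2)\,e^{\frac{2A}{d(d+1)}(j_2-j_1)}$; the exponential part is indeed what you want, but the ratio $\tilde\alpha_1/\tilde\alpha_2$ is arbitrary.  Your last step (``select $t_d\in[j_2,j_2+1]$ of $\lambda_\Gamma$-measure at least $\alpha_2$'') only yields $|t_d-t_{d-1}|\gtrsim\tilde\alpha_2$ when $j_1=j_2$, and from that alone you cannot manufacture any positive power of $\tilde\alpha_1$ in the $(i,d)$ separation.

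What the paper does instead is arrange, \emph{before} the main iteration, that the final separation satisfies $|t_d-t_{d-1}|\gtrsim\tilde\alpha_1+\tilde\alpha_2$.  This is done by a median-comparison construction: for each $x\in F$ compare the medians $m(\scriptE_{U_1}(x))$ and $m(\scriptE_{U_2}(x))$ of the two fibers, split $F=F^{1,2}\cup F^{2,1}$ according to which is smaller, and on the dominant half define
\[
U^0=\{(t,x)\in U_2: t\ge m(\scriptE_{U_2}(x))+\tfrac{\tilde\alpha_2}4\},\qquad
U^1=\{(t,x)\in U_1: t\le m(\scriptE_{U_1}(x))-\tfrac{\tilde\alpha_1}4\}.
\]
All subsequent refinements take place inside $U^1$, with the last step jumping to $U^0$; the ordering of medians then forces $t_d-t_{d-1}\ge\tfrac14(\tilde\alpha_1+\tilde\alpha_2)$.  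With that in hand, $\tilde\alpha_1+\tilde\alpha_2\ge\tilde\alpha_1^{1/2+\eta}\tilde\alpha_2^{1/2-\eta}$ for every $\eta\in[-\tfrac12,\tfrac12]$, and unwinding the normalization produces the flexible bound
\[
|t_d-t_i|\gtrsim w(a)^{-1}\alpha_1^{1/2+\eta}\alpha_2^{1/2-\eta}\,e^{\frac{A}{d(d+1)}(t_d+t_i)}e^{-\frac{2\eta A}{d(d+1)}(t_d-t_i)}.
\]
The last exponential is the \emph{cost} of moving $\eta$ away from $0$; choosing $\eta=\delta=C_d^{-1}\min\{1,(a_d-a_1)/|A|\}$ makes that cost exactly absorbable by the gain $e^{c_d(a_d-a_1)|t_d-t_i|}$ from the geometric inequality, which is where the stated form of $\delta$ comes from.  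So the extra idea you need is the $\tilde\alpha_1+\tilde\alpha_2$ separation via median splitting; once you have it, the ``calibration'' you flagged as delicate becomes a one-line choice of $\eta$.

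Two minor corrections: the injectivity you need here is for the convolution map $\Phi_\Gamma$ on ordered tuples (handled as in \cite{DrM2}, not Proposition~\ref{P:one to one}, which is for the X-ray maps); and Lemma~\ref{L:foh} is unnecessary since $\lambda_\Gamma$ is essentially a pure exponential and the pair-distance estimates are direct.
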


\begin{lemma} \label{L:lbF0}
Let $E,F_1,F_2 \subset \R^d$ be Borel sets and let $j_1,j_2 > \tau$ be integers.  For $i=1,2$, let $\alpha_i = \frac{\langle T_{j_i}^1 \chi_E,\chi_{F_i}\rangle}{|F_i|}$ and assume that $(T_{j_i}^1)^* \chi_{F_i}(y) \geq \beta_i$ for $y \in E$.  Then if $|j_1-j_2| \leq 1$,
\begin{equation} \label{E:lbF same scale}
|F_2| \gtrsim \alpha_1^{\frac{d(d-1)}2}\beta_1^{d-\frac32+\eta}\beta_2^{\frac32-\eta},
\end{equation}
where $\eta = \tfrac12$ if $d=2,3$ and $\eta=0$ if $d \geq 4$, while if $|j_1-j_2| \geq 2$, 
\begin{equation} \label{E:lbF diff scales}
|F_2| \gtrsim e^{c_d(a_d-a_1)|j_1-j_2|} \alpha_1^{\frac{d(d-1)}2-\frac{d-2}2}\alpha_2^{\frac{d-2}2}\beta_1^{d-\frac32}\beta_2^{\frac32} .
\end{equation}
The implicit constants in \eqref{E:lbF same scale} and \eqref{E:lbF diff scales} may be taken to depend only on $d$.
$\,$
\end{lemma}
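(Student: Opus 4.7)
The plan is to adapt the two-scale method of refinements developed in the proof of Lemma~\ref{L:param set} to the present setting: starting from a base point, construct a $d$-parameter family of points ending in $F_2$, with some iteration steps carried out at scale $j_1$ and others at scale $j_2$. The factor $e^{c_d(a_d-a_1)|j_1-j_2|}$ in \eqref{E:lbF diff scales} will then come from cross-scale pairs via the exponential strengthening of the geometric inequality supplied by Proposition~\ref{P:convl exp gi}.

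First I would fix a base point (in $E$ if $d$ is odd, or in a popular subset of $F_1$ if $d$ is even) and perform $d$ rounds of refinement. At each round, the iteration is carried out at a chosen scale $j_i$, using either a forward step (excising points of the current set with small popularity towards the next target, via the pointwise hypothesis $(T^1_{j_i})^*\chi_{F_i}\geq \beta_i$ on $E$) or a backward step (using the averaged hypothesis on $T^1_{j_i}\chi_E$ after a preliminary refinement to secure $T^1_{j_i}\chi_E \gtrsim \alpha_i$ on a large subset of $F_i$); as in the proof of Lemma~\ref{L:param set}, a further excision at each round enforces that the new parameter exceeds all previously chosen parameters \emph{at the same scale}. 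The output is a Borel set
\begin{equation*}
\Omega \subset [j_1, j_1+1]^{k_1} \times [j_2, j_2+1]^{k_2}, \qquad k_1+k_2 = d,
\end{equation*}
a map $\Psi:\Omega \to F_2$ of the form $\Psi(t) = y_0 + \sum_{i=1}^d \epsilon_i\, \Gamma(t_i)$ with $\epsilon_i \in \{-1,+1\}$, and a lower bound
\begin{equation*}
\int_\Omega \prod_{i=1}^d \lambda_\Gamma(t_i)\, dt \gtrsim \alpha_1^{A_1}\alpha_2^{A_2}\beta_1^{B_1}\beta_2^{B_2},
\end{equation*}
whose exponents are determined by how many backward/forward steps are taken at each scale.

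Next, I would change variables via $\Psi$, using Proposition~\ref{P:one to one} for almost-injectivity on ordered configurations and Proposition~\ref{P:convl exp gi} for the exponential geometric inequality, to obtain
\begin{equation*}
|F_2| \gtrsim \int_\Omega \prod_{i=1}^d |L_\Gamma(t_i)|^{1/d} \prod_{1 \leq i < j \leq d} |t_i - t_j|\, e^{c_d(a_d-a_1)|t_i - t_j|}\, dt.
\end{equation*}
For cross-scale pairs, $|t_i - t_j| \gtrsim |j_1 - j_2|$ and the exponential factor contributes $e^{c_d(a_d-a_1)|j_1-j_2|}$ per pair, of which one retains one copy (discarding the rest) to produce the exponential gain in \eqref{E:lbF diff scales}. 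Same-scale pairs are treated exactly as in the proof of Proposition~\ref{P:RWT}: combining Lemma~\ref{L:foh} applied to $\lambda_\Gamma$ with the popularity gains at each refinement step yields lower bounds of the form $\alpha_i\, \lambda_\Gamma(t_i)^{-1/2}\lambda_\Gamma(t_j)^{-1/2}$ (for backward pairs) or $\beta_i\, \lambda_\Gamma(t_i)^{-1/2}\lambda_\Gamma(t_j)^{-1/2}$ (for forward pairs). Since $|L_\Gamma|^{1/d} = \lambda_\Gamma^{(d+1)/2}$, all $\lambda_\Gamma$ factors collapse cleanly against the integration measure on $\Omega$, and the remaining products of $\alpha_i,\beta_i$ rearrange into the stated exponents. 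For the same-scale case \eqref{E:lbF same scale}, the exponential gain is only $O(1)$ and the argument reduces to a single-scale refinement, with the final forward step routed through $F_2$ at scale $j_2$ and the preceding ones carried out at scale $j_1$.

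The main obstacle will be to verify that a suitable split $(k_1, k_2)$ and alternation of forward/backward steps within each scale produce \emph{exactly} the exponents stated in \eqref{E:lbF diff scales} and \eqref{E:lbF same scale}, especially the non-integer powers $\beta_2^{3/2}$ and $\beta_1^{d-3/2}$. I anticipate that one routes a single terminating forward step through $F_2$ at scale $j_2$, so that $\beta_2$ picks up one unit from the integration against $\lambda_\Gamma(t_d)$ and an additional $\tfrac{1}{2}$ from the interplay of $|L_\Gamma(t_d)|^{1/d}$ with the same-scale pair bounds; the dimension-dependent $\eta = \tfrac{1}{2}$ for $d = 2, 3$ in \eqref{E:lbF same scale} should reflect a low-dimensional loss in the ordering argument, analogous to that encountered in \cite{BSlms, BSjfa}, arising from the fact that with so few pairs in $\binom{d}{2}$ one cannot simultaneously preserve the ordering within each scale \emph{and} extract the full expected popularity at the last step.
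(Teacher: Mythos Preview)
Your overall strategy---build a parameter set by multi-scale refinement, then change variables via the exponential geometric inequality---is exactly the paper's.  The paper packages the refinement step as Lemma~\ref{L:lbF}, which produces $\Omega_d \subset [j_1,j_1+1]^{d-1}\times[j_2,j_2+1]$ (so $k_2=1$, as you correctly anticipate) together with explicit separation bounds \eqref{E:separation lbF}; Lemma~\ref{L:lbF0} is then deduced from Lemma~\ref{L:lbF} by the same counting you describe.

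There are two points where your outline diverges from the paper and where your reasoning is off.  First, the half-integer exponent on $\beta_2$ does not come from ``the interplay of $|L_\Gamma(t_d)|^{1/d}$ with same-scale pair bounds,'' nor does the distinction $\eta=\tfrac12$ for $d=2,3$ reflect any low-dimensional ordering loss.  In the paper's construction (which mirrors the proof of Lemma~\ref{L:lbE}), one first compares the medians $m(\scriptF_{U_1}(y))$ and $m(\scriptF_{U_2}(y))$ on $E$ to decide whether the $t_i$ will be globally increasing or decreasing; this yields the single consecutive-pair bound $|t_d-t_{d-1}|\gtrsim \tilde\beta_1+\tilde\beta_2$.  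The non-integer power then arises simply from $\tilde\beta_1+\tilde\beta_2 \geq \tilde\beta_1^{1/2+\eta}\tilde\beta_2^{1/2-\eta}$ for any $-\tfrac12\leq\eta\leq\tfrac12$, and the paper chooses $\eta=0$ when $|j_1-j_2|\geq 2$ or $d\geq 4$, and $\eta=\tfrac12$ when $|j_1-j_2|\leq 1$ and $d\leq 3$.  There is no obstruction in low dimensions; it is purely a choice of interpolation parameter.  Second, you invoke Proposition~\ref{P:one to one} for almost-injectivity, but that result is for the X-ray iteration maps $\Phi^{d+1}_{(s_0,x_0)},\Psi^{d+1}_{(t_0,y_0)}$, not the convolution map $\sum(-1)^j\Gamma(t_j)$; here injectivity follows from the global ordering of the $t_i$ and nonvanishing of $L_\Gamma$, via \cite[Section~3]{DrM2}, as in the proof of Proposition~\ref{P:RWT}.

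A minor stylistic difference: you propose to control same-scale pairs via Lemma~\ref{L:foh}, whereas the paper works directly with the normalizations $\tilde\alpha_i,\tilde\beta_i$ of \eqref{E:alpha beta tilde}, exploiting that $\lambda_\Gamma$ is essentially constant on each $[j,j+1]$.  Both routes give the same separation bounds here, but the paper's is cleaner because the explicit exponential form of $\lambda_\Gamma$ makes the passage from $\tilde\alpha,\tilde\beta$ back to $\alpha,\beta$ and the cancellation against $|L_\Gamma|^{1/d}$ a one-line computation rather than an appeal to concavity.
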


Lemmas~\ref{L:lbE0} and~\ref{L:lbF0} will be shown to follow from the geometric inequality, careful counting, and the following two lemmas.  Let 
\[w(a) = \prod_{j=1}^d|a_j \Theta_j(\infty)|^{\frac 2{d(d+1)}} \prod_{i<j} (a_j-a_i)^{\tfrac2{d(d+1)}}\] 
(which we recall is the constant in front of the affine arclength).  

\begin{lemma} \label{L:lbE}
Under the hypotheses of Lemma~\ref{L:lbE0}, there exists a Borel set $\Omega_d \subset [j_1,j_1+1]^{d-1} \times [j_2,j_2+1]$ with
\begin{equation} \label{E:lbE Omegad}
\idotsint \chi_{\Omega_d}(t_1,\cdots,t_d) \lambda_\Gamma(t_1)\cdots \lambda_\Gamma(t_d) \, dt_1\, \cdots \, dt_d \gtrsim \alpha_1^{\lceil \frac{d}2 \rceil-1}\alpha_2 \beta_1^{\lfloor\frac{d}2\rfloor}
\end{equation}
and $\Phi(\Omega_d) \subset E_2$, where $\Phi$ is a translate of $\sum_{j=1}^d (-1)^{d-j+1}\Gamma(t_j)$.  Furthermore, if $(t_1,\ldots,t_d) \in \Omega_d$, $-\frac12 \leq \eta \leq \frac12$, and $i<j$, then
\begin{equation} \label{E:separation lbE}
|t_j-t_i| \gtrsim 
\begin{cases} 
w(a)^{-1} \alpha_1^{\frac12+\eta}\alpha_2^{\frac12-\eta}e^{\frac{A}{d(d+1)}(t_d+t_i)-\frac{2\eta A}{d(d+1)}(t_d-t_i)}, \quad \text{if $j=d$,}&\\
w(a)^{-1} \beta_1 e^{\frac{A}{d(d+1)}(t_j+t_i)}, \quad \text{if $i+1=j \equiv d-1\: (\mathop{mod}2)$,}&\\
w(a)^{-1} \alpha_1 e^{\frac{A}{d(d+1)}(t_j+t_i)}, \quad \text{otherwise.}&
\end{cases}
\end{equation}
The implicit constants depend only on $d$.
\end{lemma}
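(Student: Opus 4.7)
\emph{Plan.} The argument adapts the method-of-refinements proof of Lemma~\ref{L:param set} to the two-scale setting where the final parameter $t_d$ is drawn from a different unit interval $[j_2,j_2+1]$ than the other $t_i\in[j_1,j_1+1]$. I will perform $d-1$ refinements at scale $j_1$, each alternately losing either a factor $\alpha_1$ or $\beta_1$, and then attach $t_d$ in a single final step that uses the pointwise hypothesis at scale $j_2$.

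Specifically, begin with the incidence set $U=\{(x,t)\in F\times[j_1,j_1+1]:x-\Gamma(t)\in E_1\}$, whose $\lambda_\Gamma$-weighted mass is $\beta_1|E_1|$. Build a descending chain $U=U_0\supset U_1\supset\cdots\supset U_{d-2}$ exactly as in the proof of Lemma~\ref{L:param set}: odd-indexed refinements excise points where the $F$-fiber integral is too small relative to $\beta_1$, even-indexed ones the $E_1$-fiber integral relative to $\alpha_1$, and every refinement also excises the sub-interval $[j_1,t]$ to enforce the ordering $t_1<\cdots<t_{d-1}$ inside $[j_1,j_1+1]$. Pick a basepoint (in $F$ or $E_1$, depending on the parity of $d$) and inductively produce $\Omega_{d-1}\subseteq[j_1,j_1+1]^{d-1}$ exactly as in Lemma~\ref{L:param set}; counting $\lceil d/2\rceil-1$ applications of the $\alpha_1$-refinement and $\lfloor d/2\rfloor$ of the $\beta_1$-refinement yields $\lambda_\Gamma^{\otimes(d-1)}(\Omega_{d-1})\gtrsim\alpha_1^{\lceil d/2\rceil-1}\beta_1^{\lfloor d/2\rfloor}$. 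For each $(t_1,\ldots,t_{d-1})\in\Omega_{d-1}$, the terminal vertex $x_{d-1}:=x_0+\sum_{j<d}(-1)^{d-j+1}\Gamma(t_j)$ lies in $F$ by the refinement construction, so the hypothesis $T_{j_2}^1\chi_{E_2}(x_{d-1})\geq\alpha_2$ produces a set $A(t_1,\ldots,t_{d-1})\subseteq[j_2,j_2+1]$ of $\lambda_\Gamma$-measure $\geq\alpha_2$ on which $x_{d-1}-\Gamma(t_d)\in E_2$. Taking the union of fibers gives $\Omega_d$, which satisfies \eqref{E:lbE Omegad} and $\Phi(\Omega_d)\subset E_2$.

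For \eqref{E:separation lbE}, each refinement step produces an auxiliary parameter $s_{k+1}\in(t_k,t_{k+1})$ with $\int_{t_k}^{s_{k+1}}\lambda_\Gamma(s)\,ds$ equal to the removal threshold, yielding by the log-concavity argument of Lemma~\ref{L:param set} that $|t_{k+1}-t_k|\gtrsim\alpha\,\lambda_\Gamma(t_k)^{-1/2}\lambda_\Gamma(s_{k+1})^{-1/2}$ with $\alpha\in\{\alpha_1,\beta_1\}$. Substituting the asymptotic $\lambda_\Gamma(t)\sim w(a)e^{-2At/(d(d+1))}$ from \eqref{E:LGamma}---which varies by at most a bounded multiplicative factor over any unit interval---converts this into $|t_{k+1}-t_k|\gtrsim\alpha w(a)^{-1}e^{A(t_k+t_{k+1})/(d(d+1))}$, which is precisely the form in the two "$j\neq d$" cases. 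Non-adjacent pairs $i<j<d$ inside $[j_1,j_1+1]$ are handled by the ordering $t_i<t_{i+1}<\cdots<t_j$ and replacing $t_{k+1}$ by $t_j$ in the adjacent bound.

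The delicate case is $j=d$, where $t_i$ and $t_d$ lie in different unit intervals. Two independent bounds are available: the refinement at step $i$ gives $|t_d-t_i|\geq|t_{i+1}-t_i|\gtrsim\alpha_1 w(a)^{-1}e^{2At_i/(d(d+1))}$, while the final step supplies an admissible $t_d$-set of Lebesgue measure $\gtrsim\alpha_2 w(a)^{-1}e^{2At_d/(d(d+1))}$ from which we may select $t_d$ far from $t_i$, yielding $|t_d-t_i|\gtrsim\alpha_2 w(a)^{-1}e^{2At_d/(d(d+1))}$. The $\eta$-parametrized bound of \eqref{E:separation lbE} is precisely the weighted geometric mean of these two, with weights $\tfrac12+\eta$ and $\tfrac12-\eta$. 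Retaining the full $\eta$-family is essential because $\eta$ will be optimized later when \eqref{E:lbE} is extracted. The principal technical obstacles are (a) verifying the second bound in either ordering of $j_1,j_2$ (possibly after arranging $t_d$ at an extremal point of its admissible set) and (b) propagating the step-$i$ gap across all subsequent refinements via the ordering argument without loss; these bookkeeping steps are where the proof demands the most care.
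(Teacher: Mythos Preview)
Your overall architecture is right, and the parts of the argument concerning pairs $i<j\le d-1$ go through essentially as you describe.  The gap is in the $j=d$ separation bound, and it is not just bookkeeping.

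You claim two independent bounds for $|t_d-t_i|$: one of size $\tilde\alpha_1$ coming from $|t_d-t_i|\ge|t_{i+1}-t_i|$, and one of size $\tilde\alpha_2$ coming from the freedom to choose $t_d$ inside its admissible set.  The second bound is fine (excise $d-1$ intervals of length $c\tilde\alpha_2$).  The first, however, uses $t_d\ge t_{i+1}$, which your construction does not guarantee.  When $|j_1-j_2|\ge 2$ the bound $|t_d-t_i|\ge 1\ge\tilde\alpha_1$ is trivial, but when $j_1=j_2$ (which the hypotheses of Lemma~\ref{L:lbE0} allow) the final step simply hands you a set $A(t_1,\dots,t_{d-1})\subset[j_1,j_1+1]$ of measure $\gtrsim\tilde\alpha_2$, and nothing prevents this set from being concentrated in a $\tilde\alpha_2$-neighborhood of some $t_i$.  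In that event $|t_d-t_i|\lesssim\tilde\alpha_2$, and if $\tilde\alpha_2\ll\tilde\alpha_1$ the required bound $|t_d-t_i|\gtrsim\tilde\alpha_1^{1/2+\eta}\tilde\alpha_2^{1/2-\eta}$ fails for $\eta$ near $\tfrac12$.  There is no way to force $t_d>t_{d-1}$ after the fact, because the admissible set for $t_d$ is determined by $x_{d-1}\in F$ and could lie entirely below $t_1$.

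The paper resolves this by a median-splitting device that must be put in place \emph{before} any refinements.  For each $x\in F$ one compares the medians $m(\scriptE_{U_1}(x))$ and $m(\scriptE_{U_2}(x))$ of the two fibers and splits $F=F^{1,2}\cup F^{2,1}$ accordingly.  On $F^{1,2}$ one then takes $U^1\subset U_1$ to consist of points with $t\le m(\scriptE_{U_1}(x))-\tfrac14\tilde\alpha_1$ and $U^0\subset U_2$ of points with $t\ge m(\scriptE_{U_2}(x))+\tfrac14\tilde\alpha_2$; since the $U_2$-median dominates the $U_1$-median on $F^{1,2}$, any $(t,x)\in U^1$ and $(t',x)\in U^0$ automatically satisfy $t'-t\ge\tfrac14(\tilde\alpha_1+\tilde\alpha_2)$.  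The $d-1$ refinements (also median-based, so that each step forces $t_{k+1}>t_k+c\tilde\alpha_1$ or $+c\tilde\beta_1$) are then performed inside $U^1$, and the final parameter $t_d$ is drawn from $U^0$.  This yields the full monotone chain $t_1<\cdots<t_{d-1}<t_{d-1}+c(\tilde\alpha_1+\tilde\alpha_2)\le t_d$, from which all cases of \eqref{E:separation lbE} follow at once.  The case $F^{2,1}$ is symmetric with the monotonicity reversed.  This pre-splitting is the new idea beyond Lemma~\ref{L:param set}, and your proposal needs it.
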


\begin{lemma} \label{L:lbF}
Under the hypotheses of Lemma~\ref{L:lbF0}, there exists a Borel set $\Omega_d \subset [j_1,j_1+1]^{d-1}\times[j_2,j_2+1]$ with 
$$
\idotsint \chi_{\Omega_d}(t_1,\cdots,t_d) \lambda_\Gamma(t_1)\cdots \lambda_\Gamma(t_d) \, dt_1\, \cdots \, dt_d \gtrsim \beta_1^{\lceil \frac{d}2 \rceil-1}\beta_2 \alpha_1^{\lfloor\frac{d}2\rfloor}
$$
and $\Phi(\Omega_d) \subset F_2$, where $\Phi$ is  translate of $\sum_{j=1}^d (-1)^{d-j} \Gamma(t_j)$.  Furthermore, if $(t_1,\ldots,t_d) \in \Omega_d$, $-\frac12 \leq \eta \leq \frac12$, and $i<j$, then
\begin{equation} \label{E:separation lbF}
|t_j-t_i| \gtrsim 
\begin{cases}
w(a)^{-1} \beta_1^{\frac12+\eta} \beta_2^{\frac12-\eta} e^{\frac{A}{d(d+1)}(t_d+t_i)}e^{-\frac{2\eta A}{d(d+1)}(t_d-t_i)}, \:\: \text{$i+1=j=d$,}\\
w(a)^{-1} (\alpha_1\alpha_2)^{\frac12}e^{\frac{A}{d(d+1)}(t_d+t_i)}, \:\: \text{$i+1<j=d$, $|j_1-j_2| \geq 2$,}\\
w(a)^{-1}\beta_1 e^{\frac{A}{d(d+1)}(t_j+t_i)}, \quad \text{$i+1=j<d$, $j \equiv d\: (\mathop{mod}2)$,}\\
w(a)^{-1}\alpha_1 e^{\frac{A}{d(d+1)}(t_j+t_i)}, \quad \text{otherwise.}
\end{cases}
\end{equation}
Again, the implicit constants depend only on $d$.
\end{lemma}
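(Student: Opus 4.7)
The plan is to run the method of refinements in the form used for the proof of Lemma~\ref{L:lbE}, with suitable modifications to reflect the swapped hypotheses of Lemma~\ref{L:lbF0} (here $\alpha_i$ is an average over $F_i$ and $\beta_i$ is a pointwise bound on $E$) and the new target set $F_2$ in place of $E_2$. The sign pattern $(-1)^{d-j}$ forces us to begin from a base point in a refined subset of $E$ when $d$ is odd, and of $F_1$ when $d$ is even, and then to select parameters $t_1 < t_2 < \cdots < t_{d-1}$ in $[j_1, j_1+1]$ and finally $t_d \in [j_2, j_2+1]$, so that the partial sums $\Phi_k = y_0 + \sum_{j=1}^k (-1)^{d-j}\Gamma(t_j)$ lie alternately in refinements of $E$ and $F_1$, terminating with $\Phi_d \in F_2$.

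At each intermediate step $k<d$ I would alternate between the hypothesis $(T^1_{j_1})^*\chi_{F_1}(y) \geq \beta_1$ (valid on $E$) and the refined pointwise version of $T^1_{j_1}\chi_E(x) \gtrsim \alpha_1$ on a suitable subset of $F_1$, while enforcing $t_k > t_{k-1}$ at each stage at the cost of a multiplicative constant, exactly as in the proof of Lemma~\ref{L:param set}. The terminal step then uses $(T^1_{j_2})^*\chi_{F_2}(y) \geq \beta_2$ to land in $F_2$ and pick up the density $\beta_2$, giving the claimed measure bound $\beta_1^{\lceil d/2 \rceil -1}\beta_2\alpha_1^{\lfloor d/2 \rfloor}$ after multiplying the per-step densities. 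The adjacent separation bounds in $[j_1, j_1+1]$ will follow from the essentially monotone behavior of $\lambda_\Gamma \sim w(a)\,e^{-2At/(d(d+1))}$ (an immediate consequence of \eqref{E:LGamma}), which varies by at most a constant factor on any sub-interval of length at most $1$; this converts each density-driven refinement into the lower bound $|t_k - t_{k-1}| \gtrsim w(a)^{-1}(\text{density})\,e^{A(t_{k-1}+t_k)/(d(d+1))}$, with the density alternating between $\alpha_1$ and $\beta_1$ according to the parity patterns listed in \eqref{E:separation lbF}. Non-adjacent pairs not involving $t_d$ inherit the $\alpha_1$ form by transitivity, since at least one intermediate adjacent gap always carries density $\alpha_1$.

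The cases involving $t_d$ will be the delicate part. For $i+1=j=d$, the standard adjacent bound coming from step $d$ gives a factor $\beta_2$, while a complementary bound from the refinement at step $d-1$ contributes $\beta_1$; interpolating with weight $\eta \in [-\tfrac12, \tfrac12]$ produces the mixed form $\beta_1^{1/2+\eta}\beta_2^{1/2-\eta}$ and the accompanying factor $e^{-2\eta A(t_d - t_i)/(d(d+1))}$. For $i+1 < j=d$ with $|j_1-j_2|\ge 2$, the scales are truly distinct and the forced gap $|t_d-t_i| \ge |j_1-j_2|-1 \ge 1$ can be combined with densities $\alpha_1$ and $\alpha_2$ from the scale-$j_1$ and scale-$j_2$ refinements via a geometric-mean argument, yielding the $(\alpha_1\alpha_2)^{1/2}$ form. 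The main obstacle will be the careful bookkeeping needed to track, for each parity of $d$, which density applies at each step and how the terminal scale transition interacts with the ordering enforcement; in particular, verifying the symmetric mixed bound in the exceptional case $i+1 < j = d$, $|j_1 - j_2| \geq 2$, requires balancing the refinements at both scales rather than using a one-sided estimate.
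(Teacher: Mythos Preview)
Your proposal is essentially the paper's approach: run the refinement argument of Lemma~\ref{L:lbE} with the roles of $\alpha$ and $\beta$ interchanged, reducing \eqref{E:separation lbF} to the rescaled form $|t_j-t_i|\gtrsim \tilde\beta_1+\tilde\beta_2$ (when $i+1=j=d$), $\tilde\beta_1$ or $\tilde\alpha_1$ (intermediate adjacent pairs), etc., exactly as you describe.

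The one place you overcomplicate matters is the case $i+1<j=d$, $|j_1-j_2|\ge 2$, which you flag as the ``main obstacle'' requiring ``balancing the refinements at both scales.'' In fact this case is trivial and needs no refinement at scale $j_2$ at all: you already note $|t_d-t_i|\ge |j_1-j_2|-1\ge 1$, and since $\tilde\alpha_1,\tilde\alpha_2\le 1$ (immediate from the definitions, as each is an unweighted average of an indicator over a unit interval), one has $|t_d-t_i|\ge 1\gtrsim \tilde\alpha_1+\tilde\alpha_2\ge (\tilde\alpha_1\tilde\alpha_2)^{1/2}$, which unwinds to the stated $(\alpha_1\alpha_2)^{1/2}$ bound. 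No geometric-mean balancing of two separate density estimates is required; the $\alpha_2$ enters only through the trivial inequality $\tilde\alpha_2\le 1$.
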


We now prove Lemmas~\ref{L:lbE0} and~\ref{L:lbF0}, assuming Lemmas~\ref{L:lbE} and~\ref{L:lbF}.  The form of the lower bounds in \eqref{E:separation lbE} and \eqref{E:separation lbF} (which is made possible by ordering the $t_i$ in the proofs of the lemmas) allows us to give an extremely short proof compared with e.g.\ \cite{DLW, BSlms, BSjfa}.  

\begin{proof}[Proofs of Lemmas~\ref{L:lbE0} and~\ref{L:lbF0} from Lemmas~\ref{L:lbE} and~\ref{L:lbF}]
The argument is fairly standard, so we will be brief.  For \eqref{E:lbE},
\begin{equation} \label{E:lb E2}
\begin{aligned}
|E_2| &\gtrsim \int_{\Omega_d} J_\Gamma(t_1,\ldots,t_d)\, dt\\
&\gtrsim \int_{\Omega_d} \prod_{i=1}^d|L_\Gamma(t_i)|^{\frac 1d} \prod_{1 \leq i < j \leq d}( |t_i-t_j| e^{c_d(a_d-a_1)|t_i-t_j|})\, dt
\end{aligned}
\end{equation}
Furthermore, on $\Omega_d$, 
\begin{align}\label{E:vand E}
&\prod_{1 \leq i < j \leq d} |t_j-t_i| \\\notag
&\gtrsim w(a)^{-\frac{d(d-1)}2}e^{\frac{2A(d-1)}{d(d+1)}(t_1+\cdots+t_d)}
\alpha_1^{\frac{d(d-1)}2}\bigl(\tfrac{\beta_1}{\alpha_1}\bigr)^{\lceil \frac d2\rceil-1}\bigl(\tfrac{\alpha_2}{\alpha_1}\bigr)^{(d-1)(\frac12-\eta)}\prod_{i=1}^{d-1}e^{-\frac{2\eta A}{d(d-1)}(t_d-t_i)}\\
\notag
& \gtrsim \prod_{j=1}^d |L_\Gamma(t_j)|^{-\frac{d-1}{d(d+1)}}
\alpha_1^{\frac{d(d-1)}2}\bigl(\tfrac{\beta_1}{\alpha_1}\bigr)^{\lceil \frac d2\rceil-1}\bigl(\tfrac{\alpha_2}{\alpha_1}\bigr)^{(d-1)(\frac12-\eta)}\prod_{1 \leq i < j \leq d}e^{-\frac{2|\eta A|}{d(d-1)}|t_j-t_i|}.
\end{align}
Thus by taking $\eta = C_d^{-1}\min\{1,\tfrac{a_d-a_1}{|A|}\}$, the loss in \eqref{E:vand E} is compensated for by the gain in \eqref{E:lb E2}.  Since
$$
|L_\Gamma(t)|^{\frac 1d-\frac{d-1}{d(d+1)}} = |L_\Gamma(t)|^{\frac 2{d(d+1)}} = \lambda_\Gamma(t),
$$
\eqref{E:lbE} follows by \eqref{E:lbE Omegad} and some arithmetic.

The deductions of \eqref{E:lbF same scale} and \eqref{E:lbF diff scales} from Lemma~\ref{L:lbF} are essentially the same, with the small exception that if $|j_1-j_2| \geq 2$, or if $|j_1-j_2| \leq 1$ and $d \geq 4$, then we use \eqref{E:separation lbF} with $\eta=0$.  We omit the details.
\end{proof}

Now we give the proofs of Lemmas~\ref{L:lbE} and~\ref{L:lbF}.  In both cases, we only give the full details when the dimension is even; the odd dimensional case is similar.  

\begin{proof}[Proof of Lemma~\ref{L:lbE}]  
As in the proof of the restricted weak type inequality, we manage to avoid the ``band structure'' argument entirely by ordering.  Matters are more delicate for the strong type bounds because of the presence of the set $E_2$, but we nonetheless arrive at a vastly shorter proof than that in e.g.\ \cite{BSjfa}.  

To simplify the notation somewhat, we define
\begin{equation} \label{E:alpha beta tilde}
\tilde \alpha_i = w(a)^{-1}e^{\frac{2A}{d(d+1)}j_i}\alpha_i, \qquad \tilde \beta_i = w(a)^{-1}e^{\frac{2A}{d(d+1)}j_i}\beta_i, \qquad i=1,2.
\end{equation}
Then for $i=1,2$, 
\begin{gather} \label{E:geq tilde alpha}
x \in F \implies \int_{j_i}^{j_i+1} \chi_{E_i}(x-\Gamma(t))\, dt \geq \tilde \alpha_i, \\
\label{E:geq tilde beta}
|E_i|^{-1}\int_{E_i} \int_{j_i}^{j_i+1} \chi_F(y+\Gamma(t))\, dt\, dy \geq \tilde \beta_i,\\
\label{E: tildes leq 1}
\tilde \alpha_i, \tilde \beta_i \leq 1.
\end{gather}
Furthermore, for $(t_1,\ldots,t_d) \in [j_1,j_1+1]^{d-1} \times [j_2,j_2+1]$ and $1 \leq i<j \leq d$, \eqref{E:separation lbE} would follow from 
\begin{equation} \label{E:separation tilde}
|t_j-t_i| \gtrsim \begin{cases}
(\tilde \alpha_1+\tilde \alpha_2), \quad &\text{if $j=d$}\\
\tilde \beta_1, \quad &\text{if $i+1=j \equiv d-1$ $(\rm{mod}\: 2)$}\\
\tilde \alpha_1, \quad &\text{otherwise},
\end{cases}
\end{equation}
since $\tilde\alpha_1+\tilde\alpha_2 \geq (\tilde\alpha_1)^{\frac12+\eta}(\tilde\alpha_2)^{\frac12-\eta}$ for all $\eta\in [-\frac12,\frac12]$.  

As in the proof of Proposition~\ref{P:RWT}, our argument is very much inspired by ideas from \cite{TW}, and matters are much clearer in the double-fibration formulation.  We define
$$
U_i = \{(t,x) \in [j_i,j_i+1] \times \R^d : x \in F, x-\Gamma(t) \in E_i\}, \quad i=1,2.
$$
Given a measurable set $U \subset \R^{d+1}$, we define set-valued functions 
$$
\scriptE_U(x) = \{t : (t,x) \in U\}, \qquad \scriptF_U(y) = \{t : (t,y+\Gamma(t)) \in U\}.
$$
Observe that for every $x,y \in \R^d$,
$$\scriptE_{U_i}(x), \scriptF_{U_i}(y) \subset [j_i,j_i+1].$$ 
Furthermore,
$$\scriptE_{U_i}(x) = \emptyset \ctc{if} x \notin F, \quad \scriptF_{U_i}(y) = \emptyset \ctc{if} y \notin E_i, $$
and
\begin{align*}
|\scriptE_{U_i}(x)| &= \int\chi_{E_i}(x-\Gamma(t))\, dt \ctc{if} x \in F, \\
|\scriptF_{U_i}(y)| &= \int\chi_F(x+\Gamma(t))\, dt, \ctc{if} y \in E_i.
\end{align*}

If $S \subset \R$ is a measurable set with $|S| > 0$, we define
$$
m(S) = \inf\{t : |(t,\infty) \cap S| < \tfrac12|S|\}.
$$
We define
\begin{align*}
F^{1,2} &= \{x \in F : m(\scriptE_{U_1}(x)) \leq m(\scriptE_{U_2}(x))\}, \qquad U^{1,2}_1 = \{(t,x) \in U_1 : x \in F^{1,2}\}\\
F^{2,1} &= \{x \in F : m(\scriptE_{U_2}(x)) \leq m(\scriptE_{U_1}(x))\}, \qquad U^{2,1}_1 = \{(t,x) \in U_1: x \in F^{2,1}\}.
\end{align*}
For example, if $j_1 < j_2$, $F^{1,2}=F$ and $F^{2,1}=\emptyset$, and vice-versa if $j_1 > j_2$.

The functions $x \mapsto m(\scriptE_{U_i}(x))$ are Borel, and hence the $F^{i,j}$ and $U^{i,j}$ are Borel sets.  
Since $U_1 = U^{1,2}_1 \cup U^{2,1}_1$, 
$$
|U^{1,2}_1| \geq \tfrac12|U_1|, \qtq{or} |U^{2,1}_1| \geq \tfrac12|U_1|. 
$$
We consider first the case when $|U^{1,2}_1| \geq \frac12|U_1|$.

We define 
\begin{align}\label{E:def U0}
U^0 &= \{(t,x) \in U_2 : x \in F^{1,2}, \: t \geq m(\scriptE_{U_2}(x))+\tfrac{\tilde \alpha_2}4\}\\
\label{E:def U1}
U^1 &= \{(t,x) \in U^{1,2}_1 : t \leq m(\scriptE_{U_1}(x))-\tfrac{\tilde \alpha_1}4\}.
\end{align}
Then if $(t,x) \in U^1$ and $(t',x) \in U^0$, $t' \geq \tfrac14(\tilde \alpha_1+\tilde \alpha_2)$.  Furthermore, for any $x \in F^{1,2}$,
$$
|\scriptE_{U^0}(x)| \geq \tfrac12|\scriptE_{U_2}(x)| - \tfrac14 \tilde \alpha_2 \geq \tfrac14\tilde \alpha_2.
$$
If $x \in F^{1,2}$, $\scriptE_{U^{1,2}_1}(x) = \scriptE_{U_1}(x)$, so if $(t,x) \in U^{1,2}_1$, 
$$
|\scriptE_{U^1}(x)| \geq \tfrac12|\scriptE_{U_1^{1,2}}(x)| - \tfrac{\tilde \alpha_1}4  \geq \tfrac14 |\scriptE_{U^{1,2}_1}(x)|.
$$
Thus by Fubini, 
$$
|U^1| \geq \tfrac14|U^{1,2}_1| \geq \tfrac18|U_1|.
$$

To continue, we define two refinement procedures for a set $U \subseteq U_1$.  Let
\begin{align*}
U_\scriptE &= \{(t,x) \in U : |\scriptE_U(x)| \geq \tfrac12 \tfrac{|U|}{|F|}\}\\
U_\scriptF &= \{(t,x) \in U : |\scriptF_U(x-\Gamma(t))| \geq \tfrac12 \tfrac{|U|}{|E|}\}.
\end{align*}
Then $U_\scriptE$ and $U_\scriptF$ are measurable sets.  Moreover,
$$
|U_\scriptE| = |U|-|U\setminus U_\scriptE| = |U|-\int_{\{x \in F : |\scriptE_U(x)| < \frac12 \frac{|U|}{|F|}\}}|\scriptE_U(x)|\, dx \geq \tfrac12 |U|.
$$
Similarly, $|U_\scriptF| \geq \tfrac12|U|$.  We refine further, defining
\begin{align*}
\langle U \rangle_\scriptE &= \{(t,x) \in U_\scriptE : t \leq m(\scriptE_U(x)) - \tfrac18 \tfrac{|U|}{|F|}\}\\
\langle U \rangle_\scriptF &= \{(t,x) \in U_\scriptF : t \leq m(\scriptF_U(x-\Gamma(t))) - \tfrac18 \tfrac{|U|}{|E|}\}.
\end{align*}

We claim that $|\langle U \rangle_\scriptE| \geq \tfrac18|U|$ and $|\langle U \rangle_\scriptF| \geq \tfrac18|U|$.  Indeed, the former follows from the fact that if $(t,x) \in U_\scriptE$,
$$
|\scriptE_{\langle U \rangle_\scriptE}(x)| \geq \tfrac12 |\scriptE_U(x)| - \tfrac18 \tfrac{|U|}{|F|} \geq \tfrac14|\scriptE_U(x)| = \tfrac14|\scriptE_{U_\scriptE}(x)|,
$$
and so $|\langle U \rangle_\scriptE| \geq \tfrac14|U_\scriptE|$; that $|\langle U \rangle_\scriptF| \geq \tfrac14|U_\scriptF|$ follows by a similar argument.  

If $(t,x) \in \langle U \rangle_\scriptE$,
$$
|\{s \geq t + \tfrac18\tfrac{|U|}{|F|} : (s,x) \in U \}| \geq |(m(\scriptE_U(x)),\infty) \cap \scriptE_U(x)| = \tfrac12 |\scriptE_U(x)| \geq \tfrac14 \tfrac{|U|}{|F|},
$$
and similarly, if $(t,x) \in \langle U \rangle_\scriptF$,
$$
|\{s \geq t + \tfrac18 \tfrac{|U|}{|E|} : (s,x-\Gamma(t)+\Gamma(s)) \in U \}| \geq \tfrac12 |\scriptF_U(x-\Gamma(t))| \geq \tfrac14 \tfrac{|U|}{|E|}.
$$

With $U^1$ as in \eqref{E:def U1}, for $2 \leq i \leq d$, we define recursively 
$$
U^i = \begin{cases}
\langle U^{i-1}\rangle_\scriptF, \qquad &\text{$i$ even}\\
\langle U^{i-1}\rangle_\scriptE, \qquad &\text{$i$ odd}.
\end{cases}
$$
Then $|U^i| \gtrsim |U^{1,2}_1| \gtrsim |U_1|$ for $1 \leq i \leq d$, and so by \eqref{E:geq tilde alpha} and \eqref{E:geq tilde beta}, if $(t,x) \in U^i$ and $c_d>0$ is sufficiently small,
\begin{align*}
&|\{s \geq t + c_d \tilde \beta_1 : (s,x-\Gamma(t)+\Gamma(s)) \in U^{i-1}\}| \geq c_d \tilde\beta_1, \quad &\text{if $i$ is even,}\\
&|\{s \geq t + c_d \tilde\alpha_1 : (s,x) \in U^{i-1}\}| \geq c_d \tilde\alpha_1, \quad &\text{if $i$ is odd.\:}
\end{align*}
Let $(t_0,x_0) \in U^d$.  For $1 \leq k \leq d$ and $(t_1,\ldots,t_k) \in \R^k$, define
$$
x(t_1,\ldots,t_k) = \begin{cases}
x_0-\sum_{j=1}^k (-1)^j \Gamma(t_j), \qquad &\text{if $k$ is odd},\\
x(t_1,\ldots,t_k), \qquad &\text{if $k$ is even.}
\end{cases}
$$

Recalling that $d$ is even, we define
$$
\Omega_1 = \{t : t \geq t_0, (t,x(t)) \in U^{d-1}\},
$$
and for $2 \leq k \leq d-1$,
$$
\Omega_k = \{(t_1,\ldots,t_k) \in \Omega_{k-1} \times \R : t_k \geq t_{k-1}+c_d\rho_k, (t_k,x(t_1,\ldots,t_k)) \in U^{d-k}\}, 
$$
where $\rho_k$ equals $\tilde\alpha_1$ if $k$ is even and $\tilde\beta_1$ if $k$ is odd.  Finally, we define
$$
\Omega_d = \{(t_1,\ldots,t_d) \in \Omega_{d-1} \times \R : t_d \geq t_{d-1}+c_d(\tilde\alpha_1+\tilde\alpha_2), \: (t_d,x(t_1,\ldots,t_d)) \in U^0\}.
$$

That $\Omega_d \subset [j_1,j_1+1]^{d-1} \times [j_2,j_2+1]$ and $\Phi(\Omega_d) \subset E_2$ follow from the definitions of $U_1,U_2$.  By construction, 
$$|\Omega_d| \gtrsim \tilde\alpha_1^{\frac d2-1}\tilde\alpha_2 \tilde\beta_1^{\frac d2},$$
which, by the definition of the $\tilde \alpha_i$ and $\tilde \beta_i$ implies \eqref{E:lbE Omegad}.

Finally, we must verify \eqref{E:separation tilde}.  Since $t_i \in [j_1,j_1+1]$ if $1 \leq i \leq d-1$ and $t_d \in [j_2,j_2+1]$, the second lower bound is trivial.  The first, third, and fourth lower bounds follow from the fact that
\begin{equation} \label{E:monotone2}
t_1+c_d \tilde \alpha_1 \leq t_2,  \: \ldots, \: t_{d-2} + c_d\tilde\beta_1 \leq t_{d-1}, \: t_{d-1}+c_d(\tilde \alpha_1+\tilde \alpha_2) \leq t_d.
\end{equation}

This completes the proof in the case when $|U^{1,2}_1| \geq \tfrac12|U_1|$.  If instead $|U^{2,1}_1| \geq \tfrac12|U_1|$, we define
\begin{align*}
U^0 &= \{(t,x) \in U_2 : x \in F^{1,2}, t \leq m(\scriptE_{U_2}(x))-\tfrac{\tilde\alpha_2}{4}\}\\
 U^1 &= \{(t,x) \in U_1^{2,1} : t \geq m(\mathcal E_{U_1}(x))+\tfrac{\tilde\alpha_1}4\}.
 \end{align*}
Then a similar argument to that given above yields a set $\Omega_d$ on which the monotonicity noted in \eqref{E:monotone2} is reversed, so
$$
t_1-c_d \tilde \alpha_1 \geq t_2,  \: \ldots, \: t_{d-2} - c_d\tilde\beta_1 \geq t_{d-1}, \: t_{d-1}-c_d(\tilde \alpha_1+\tilde \alpha_2) \geq t_d,
$$
and from that it is easy to show that $\Omega_d$ satisfies the conclusions of the lemma.
\end{proof}

\begin{proof}[Proof of Lemma~\ref{L:lbF}]
Define $\tilde\alpha_j$, $\tilde\beta_j$ as above, that is, 
$$
\tilde\alpha_i = w(a)^{-1}e^{\frac{2A}{d(d+1)}j_i}\alpha_i, \qquad \tilde\beta_i = w(a)^{-1}e^{\frac{2A}{d(d+1)}j_i}\beta_i.
$$
Then for $(t_1,\ldots,t_d) \in [j_1,j_1+1]^{d-1}\times[j_2,j_2+1]$, \eqref{E:separation lbF} would follow from 
\begin{equation} \label{E:separation lbF tilde}
|t_j-t_i| \gtrsim 
\begin{cases}
\tilde\beta_1+\tilde\beta_2, \quad &i+1=j=d\\
\tilde\alpha_1+\tilde\alpha_2, \quad &i+1<j=d, |j_1-j_2| \geq 2,\\
\tilde\beta_1, \quad &i+1=j<d, j \equiv d(\rm{mod}\: 2)\\
\tilde\alpha_1, \quad &\text{otherwise}.
\end{cases}
\end{equation}
The proof would be almost exactly the same as the proof of Lemma~\ref{L:lbE}, with the roles of $\tilde\alpha$ and $\tilde\beta$ reversed, were it not for the second case above, $|t_j-t_i| \gtrsim \tilde\alpha_1+\tilde\alpha_2$, if $i+1<j = d$ and $|j_1-j_2| \geq 2$.  But this lower bound is actually trivial, because if $|j_1-j_2| \geq 2$ and $i+1<j=d$, then $|t_j-t_i| \geq |j_1-j_2|-1 \geq 1 \gtrsim \tilde\alpha_1+\tilde\alpha_2$.  

Assuming that the dimension $d$ is even, we argue as in the proof of Lemma~\ref{L:lbE} to construct our parameter set $\Omega_d \subset [j_1,j_1+1]^{d-1}\times[j_2,j_2+1]$ so that exactly one of the following holds for every $(t_1,\ldots,t_d) \in \Omega_d$:  
\begin{gather*}
t_1+c_d\tilde\beta_1 \leq t_2, t_2+c_d\tilde\alpha_1 \leq t_3,\ldots, t_{d-2}+c_d\tilde\alpha_1 \leq t_{d-1}, t_{d-1}+c_d(\tilde\beta_1+\tilde\beta_2) \leq t_d, \\
\text{or}\\
t_1-c_d\tilde\beta_1 \geq t_2, t_2-c_d\tilde\alpha_1 \geq t_3,\ldots, t_{d-2}-c_d\tilde\alpha_1 \geq t_{d-1}, t_{d-1}-c_d(\tilde\beta_1+\tilde\beta_2) \geq t_d.
\end{gather*}
The lower bound \eqref{E:separation lbF tilde} is immediate, and so the lemma is proved.  
\end{proof}

The remainder of the section will be devoted to the proof of Theorem~\ref{T:convl st}.  

\begin{proof}[Proof of Theorem~\ref{T:convl st}]
The rough outline of our argument follows that of Christ in \cite{ChCCC}, but some substantial modifications are made to deal with the weight and the variability of $\theta$.  Some of these modifications are similar to those made in \cite{DSjfa}, and some are new.

We let $p_\theta = p_d/\theta$, $q_\theta = q_d/\theta$ and begin by proving that $T^\theta$ is of weak type $(p_\theta,q_\theta)$ for each $0 <\theta \leq 1$.  Since $T^\theta$ is a positive operator, it suffices to show that 
$$
\langle T^\theta f,\chi_F\rangle \lesssim_{d,\theta,A,a_d-a_1} |F|^{\frac1{q_\theta'}},
$$
whenever $F$ is a Borel set, $f = \sum_k 2^k \chi_{E_k}$, with the $E_k$ disjoint Borel sets, and $\tfrac12 < \|f\|_{L^{p_\theta}} \leq 1$.  The subscripts denote the dependence of the implicit constants; this will always be as described in the statement of Theorem~\ref{T:convl st}.  

We write
$$
\langle T^\theta f,\chi_F\rangle = \sum_{j,k} 2^k \langle T^\theta_j \chi_{E_k},\chi_F\rangle.
$$
For $\eta \in 2^\Z$, define
$$
\scriptK_\eta = \{k \in \Z : \tfrac12\eta <  2^{kp_\theta}|E_k| \leq \eta\}.
$$
For $k \in \Z$ and $\eps \in 2^\Z$, define
$$
\scriptJ_\eps(k) = \{j : \tfrac12\eps |E_k|^{\frac1{p_\theta}}|F|^{\frac1{q_\theta'}} < \langle T^\theta_j \chi_{E_k},\chi_F\rangle \leq \eps |E_k|^{\frac1{p_\theta}}|F|^{\frac1{q_\theta'}}\}.
$$

We claim that
$$
\langle T^\theta f, \chi_F \rangle = \sum_{0 < \eta \leq 1,} \sum_{0 < \eps \leq C_d}\sum_{k \in \scriptK_\eta} \sum_{j \in \scriptJ_\eps(k)} 2^k \langle T^\theta_j \chi_{E_k}, \chi_F\rangle,
$$
where the outer sums are taken over dyadic values of $\eps,\eta$.  That we may take $\eta \leq 1$ follows from our assumption that $\sum 2^{kp_\theta}|E_k| \leq 1$.  That we may take $\eps \leq C_d$ would follow if we knew that each $T^\theta_j$ was of restricted weak type $(p_\theta,q_\theta)$ with uniform constants.  That $T^1_j$ is of restricted weak type $(p_1,q_1)$ follows from Lemma~\ref{L:lbE0} with $\alpha_1=\alpha_2$, $\beta_1=\beta_2$, and $j_1=j_2=j$, and that $T^0_j$ is of strong type $(\infty,\infty)$ is elementary.  Thus $T^\theta_j$ is indeed of restricted weak type $(p_\theta,q_\theta)$ by interpolation.  

For each pair $(j,k)$, define
\begin{align*}
F^{(j,k)} &= \{x \in F : T^\theta_j\chi_{E_k}(x) \geq \tfrac{\langle T^\theta_j\chi_{E_k},\chi_F\rangle}{2|F|}\}\\
E^{(j,k)} &= \{y \in E_k : (T^\theta_j)^*\chi_{F^{(j,k)}}(y) \geq \tfrac{\langle T^\theta_j \chi_{E_k}, \chi_{F^{(j,k)}}\rangle}{2|E_k|}\}.
\end{align*}
Standard arguments show that 
\begin{equation} \label{E:Ejk Fjk big}
\langle T^\theta_j\chi_{E_k},\chi_F \rangle \geq \langle T^\theta_j \chi_{E^{(j,k)}},\chi_{F^{(j,k)}}\rangle \geq \tfrac14\langle T^\theta_j\chi_{E_k},\chi_F \rangle.
\end{equation}

\begin{lemma}\label{L:bound cardJ}
For all $\eps>0$, and $k \in \Z$,
\begin{equation} \label{E:card J}
\# \scriptJ_{\eps}(k) \lesssim (\log(1+\eps^{-1}))^4\eps^{-\frac1{1+p_\theta^{-1}-q_\theta^{-1}}}.
\end{equation}
Furthermore,
\begin{equation} \label{E:disjoint card J0}
\begin{gathered}
\sum_{j\in \scriptJ_\eps(k)} |E^{(j,k)}| \lesssim (\log(1+\eps^{-1}))^3|E_k|, \\
 \sum_{k \in \scriptK_\eta}\sum_{j\in \scriptJ_\eps(k)} |F^{(j,k)}| \lesssim (\log(1+\eps^{-1}))^4|F|.
 \end{gathered}
\end{equation}
The implicit constants depend only on $d$, $\theta$, and an upper bound for $|a_d-a_1|^{-1}$.  
\end{lemma}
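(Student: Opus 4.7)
The plan is to reduce everything to bounds for the operators $T^1_j$ (on each interval $[j,j+1]$ the weight $\lambda_\Gamma$ is essentially constant) and then exploit the exponential separation gain $e^{c_d(a_d-a_1)|j_1-j_2|}$ in Lemmas~\ref{L:lbE0} and~\ref{L:lbF0} to control both the cardinality of $\scriptJ_\eps(k)$ and the multiplicity with which the refinements $F^{(j,k)}$, $E^{(j,k)}$ overlap. By Proposition~\ref{P:convl exp gi}, $\lambda_\Gamma(t)\sim\lambda_\Gamma(j)\sim w(a)e^{-2Aj/(d(d+1))}$ throughout $[j,j+1]$, so $T^\theta_j\sim\lambda_\Gamma(j)^{\theta-1}T^1_j$ as positive operators, and for $j\in\scriptJ_\eps(k)$ the defining inequality becomes $\langle T^1_j\chi_{E_k},\chi_F\rangle\sim\eps\,\lambda_\Gamma(j)^{1-\theta}|E_k|^{1/p_\theta}|F|^{1/q_\theta'}$; the refinements then obey $T^1_j\chi_{E_k}\geq\tilde\alpha_j$ on $F^{(j,k)}$ and $(T^1_j)^*\chi_{F^{(j,k)}}\geq\tilde\beta_j$ on $E^{(j,k)}$, with $\tilde\alpha_j\tilde\beta_j\sim\eps^2\lambda_\Gamma(j)^{2(1-\theta)}|E_k|^{2/p_\theta-1}|F|^{2/q_\theta'-1}$.

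\textbf{Cardinality.} Dyadically split $\scriptJ_\eps(k)=\bigsqcup_\ell\scriptJ_\eps(k;\ell)$ so that $\lambda_\Gamma(j)^{1-\theta}$, and hence $(\tilde\alpha_j,\tilde\beta_j)$, varies by at most a factor of $2$ on each bin; the restricted weak type bound for $T^1_j$ (obtained from Lemma~\ref{L:lbE0} with $j_1=j_2$) forces only $O(\log(1+\eps^{-1}))$ bins to be non-trivial. Within a single bin, given distinct $j_1<\dots<j_N$, pigeonholing on $F$ produces a point $x$ lying in $\gtrsim N/\log(1+\eps^{-1})$ of the $F^{(j_i,k)}$, hence in $F^{(j',k)}\cap F^{(j'',k)}$ with $j''-j'\gtrsim N/\log(1+\eps^{-1})$. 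Applying Lemma~\ref{L:lbE0} to $(j',j'')$ with $E_1=E^{(j',k)}$, $E_2=E_k$, and $F$ localized near $x$ yields $|E_k|\gtrsim e^{c_d(a_d-a_1)(j''-j')}\tilde\alpha^{(d-1)^2/2+\delta}\tilde\beta^{d-1}\tilde\alpha^{(d+1)/2-\delta}$; substituting the $\eps$-scaling of $\tilde\alpha,\tilde\beta$ and using $|E_k|\leq\eta\,2^{-kp_\theta}\leq 1$ produces both a logarithmic bound $j''-j'\lesssim\log(1+\eps^{-1})$ from the exponential factor and the sharper algebraic estimate $N\lesssim\eps^{-1/(1+p_\theta^{-1}-q_\theta^{-1})}$ from the polynomial part. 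Combining and summing over bins yields \eqref{E:card J}.

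\textbf{Sum bounds.} The multiplicity $M_F(x)=\#\{(j,k):x\in F^{(j,k)},\,k\in\scriptK_\eta,\,j\in\scriptJ_\eps(k)\}$ is pointwise bounded by $(\log(1+\eps^{-1}))^4$: the cardinality iteration, applied to any family of pairs $(j,k)$ for which $x\in F^{(j,k)}$, bounds the family size via Lemma~\ref{L:lbE0} used across different $k$'s with the corresponding $E_k$'s in the roles of $E_1,E_2$ (feasible because $|E_k|$ is comparable across $\scriptK_\eta$). Integrating gives $\sum_{k,j}|F^{(j,k)}|=\int M_F\lesssim(\log(1+\eps^{-1}))^4|F|$, the second inequality of \eqref{E:disjoint card J0}. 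The first inequality is proved symmetrically at a fixed scale $k$: one bounds the multiplicity $M_E(y)=\#\{j\in\scriptJ_\eps(k):y\in E^{(j,k)}\}$ via Lemma~\ref{L:lbF0} in place of Lemma~\ref{L:lbE0}, and one fewer pigeonholing step is needed (the sum is over $j$ only), accounting for the exponent $3$ instead of $4$.

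\textbf{Main obstacle.} The principal difficulty is that for $\theta<1$ and $A\neq 0$ the thresholds $\tilde\alpha_j,\tilde\beta_j$ depend geometrically on $j$ through $\lambda_\Gamma(j)^{1-\theta}$, so no single application of Lemma~\ref{L:lbE0} handles $\scriptJ_\eps(k)$ uniformly; the dyadic bucketing repairs this at the cost of the logarithmic losses in \eqref{E:card J} and \eqref{E:disjoint card J0}. Extracting the sharp exponent $-1/(1+p_\theta^{-1}-q_\theta^{-1})$ requires balancing the $\tilde\alpha$ and $\tilde\beta$ contributions precisely, and in the borderline regime $|A|\sim a_d-a_1$ the $\delta$-shift built into Lemma~\ref{L:lbE0} is exactly what allows the exponential gain to overcome the weight-induced loss in the pointwise thresholds.
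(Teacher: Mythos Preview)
Your outline has the right ingredients (convert $T^\theta_j$ to $T^1_j$, use the exponential gain in Lemmas~\ref{L:lbE0}--\ref{L:lbF0}), but the core step where you actually invoke those lemmas is not set up correctly, and this is a genuine gap rather than a missing detail.

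Lemmas~\ref{L:lbE0} and~\ref{L:lbF0} are statements about \emph{sets}: to apply Lemma~\ref{L:lbE0} you need a set $F$ on which $T^1_{j_i}\chi_{E_i}\geq\alpha_i$ for $i=1,2$, and the quantity $\beta_1=\langle T^1_{j_1}\chi_{E_1},\chi_F\rangle/|E_1|$ must be bounded below. Your argument produces only a \emph{point} $x\in F^{(j',k)}\cap F^{(j'',k)}$; ``$F$ localized near $x$'' does not yield a usable $\beta_1$, because $\beta_1$ scales like $|F|$. If instead you take $F'=F^{(j',k)}\cap F^{(j'',k)}$, you need $|F'|$ to be large, and the existence of a point in many of the $F^{(j_i,k)}$ says nothing about the size of any pairwise intersection. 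The same problem recurs in your pointwise multiplicity bounds $M_F(x)$, $M_E(y)$.

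The paper repairs exactly this issue by first dyadically pigeonholing on the \emph{sizes} $|E^{(j,k)}|\sim\rho_1|E_k|$ and $|F^{(j,k)}|\sim\rho_2|F|$ (the restricted weak type forces $\rho_1^{1/p_\theta}\rho_2^{1/q_\theta'}\gtrsim\eps$, hence only $O(\log(1+\eps^{-1}))^2$ pairs $(\rho_1,\rho_2)$), and then, within a $(\rho_1,\rho_2)$-bin, running Cauchy--Schwarz: if $\sum_j|E^{(j,k)}|>4|E_k|$, then $(\sum_j|E^{(j,k)}|)^2\lesssim|E_k|\sum_{j_1\neq j_2}|E^{(j_1,k)}\cap E^{(j_2,k)}|$, which forces some intersection $G=E^{(j_1,k)}\cap E^{(j_2,k)}$ with $|G|\gtrsim\rho_1^2|E_k|$. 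Now one applies Lemma~\ref{L:lbF0} with $E=G$ and $F_i=F$, obtaining a contradiction whenever $|j_1-j_2|\geq C_{d,\theta}(a_d-a_1)^{-1}\log(1+\eps^{-1})$. Thus any sufficiently separated subcollection has $\sum|E^{(j,k)}|\leq 4|E_k|$, and the inequality for the full sum follows. The $F^{(j,k)}$ sum and the cardinality bound are handled analogously.

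Your dyadic binning by $\lambda_\Gamma(j)^{1-\theta}$ is not needed: the paper converts via $\langle T^\theta_j\chi_{E_k},\chi_F\rangle\lesssim\langle T^1_j\chi_{E_k},\chi_F\rangle^\theta|F|^{1-\theta}$, which gives $\langle T^1_j\chi_{E_k},\chi_F\rangle\gtrsim\eps^{1/\theta}|E_k|^{1/p_1}|F|^{1/q_1'}$ directly and uniformly in $j$, so the entire argument runs at the $\theta=1$ endpoint with $\eps$ replaced by $\eps^{1/\theta}$.
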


\begin{proof}[Proof of Lemma~\ref{L:bound cardJ}]
Define
$$
\scriptJ_{\eps,\rho_1,\rho_2}(k) = \{j \in \scriptJ_\eps(k) : \tfrac 12 \rho_1|E_k| < |E^{(j,k)}| \leq \rho_1|E_k|,\: \tfrac12 \rho_2|F| < |F^{(j,k)}| \leq \rho_2|F|\}.
$$
If $j \in \scriptJ_{\eps,\rho_1,\rho_2}(k)$, by the restricted weak type bound noted above,
$$
\tfrac 12 \eps |E_k|^{\frac1{p_\theta}}|F|^{\frac1{q_\theta'}} \leq \langle T^\theta_j \chi_{E_k},\chi_{F_k} \rangle \leq 4 \langle T_j^\theta \chi_{E^{(j,k)}},\chi_{F^{(j,k)}} \rangle \leq C_d \rho_1^{\frac1{p_\theta}}\rho_2^{\frac1{q_\theta'}}|E_k|^{\frac1{p_\theta}}|F|^{\frac1{q_\theta'}}.
$$
Thus if $\#\scriptJ_{\eps,\rho_1,\rho_2}(k) \neq 0$, 
\begin{equation} \label{E:compare rho eps}
C_d^{-1} \eps \leq \rho_1^{\frac1{p_\theta}}\rho_2^{\frac1{q_\theta'}} \leq \min\{\rho_1^{\frac1{p_\theta}},\rho_2^{\frac 1{q_\theta'}}\}.
\end{equation}

We claim that if $\scriptJ \subset \scriptJ_{\eps,\rho_1,\rho_2}(k)$ is a finite $\frac{C_{d,\theta}}{|a_d-a_1|}\log(1+ \eps^{-1})$-separated set, then
\begin{equation} \label{E:sum Es}
\sum_{j \in \scriptJ} |E^{(j,k)}| \leq 4  |E_k|, 
\end{equation}
and furthermore that if 
$$
\scriptL \subset \{(j,k) : k \in \scriptK_\eta, j \in \scriptJ_{\eps,\rho_1,\rho_2}(k)\},
$$
is a finite set with the property that $(j_1,k_1),(j_2,k_2) \in \scriptL$ implies $|k_1-k_2| \geq C_{d,\theta}\log(1+\eps^{-1})$ or $k_1=k_2$ and $|j_1-j_2| \geq \tfrac{C_{d,\theta}}{a_d-a_1}\log(1+\eps^{-1})$, then
\begin{equation} \label{E:sum Fs}
\sum_{(j,k) \in \scriptL} |F^{(j,k)}| \leq 4|F|.
\end{equation}
Here $C_{d,\theta}$ is a large constant, to be determined in a few paragraphs.  Inequalities \eqref{E:sum Es} and \eqref{E:sum Fs} certainly imply \eqref{E:disjoint card J0} by summing on dyadic values of $\rho_1,\rho_2$ satisfying \eqref{E:compare rho eps}.  

We start with \eqref{E:sum Es}, which we will prove by contradiction.  We know that 
$$
\tfrac12\#\scriptJ \rho_1|E_k| \leq \sum_{j \in \scriptJ} |E^{(j,k)}| \leq 2 \#\scriptJ \rho_1 |E_k|.
$$
By Cauchy--Schwarz,
\begin{align*}
\sum_{j \in \scriptJ}|E^{(j,k)}| &= \int_{E_k} \sum_{j \in \scriptJ}\chi_{E^{(j,k)}} \leq |E_k|^{\frac12}(\int(\sum_{j \in \scriptJ} \chi_{E^{(j,k)}})^2)^{\frac12} \\
&= |E_k|^{\frac12}(\sum_{j \in \scriptJ}|E^{(j,k)}| + \sum_{j_1 \neq j_2}|E^{(j_1,k)} \cap E^{(j_2,k)}|)^{\frac12},
\end{align*}
so if $\sum_{j \in \scriptJ}|E^{(j,k)}| > 4|E_k|$, 
$$
 (\#\scriptJ \rho_1 |E_k|)^2 \lesssim_d |E_k|\sum_{j_1 \neq j_2} |E^{(j_1,k)} \cap E^{(j_2,k)}| \lesssim_d |E_k| (\#\scriptJ)^2 \sup_{j_1 \neq j_2}|E^{(j_1,k)} \cap E^{(j_2,k)}|.
$$
Hence there exist $j_1 \neq j_2$ such that
$$
\rho_1^2 |E_k| \lesssim_d |E^{(j_1,k)} \cap E^{(j_2,k)}|.
$$

Assuming the conclusion of the previous paragraph, let $G = E^{(j_1,k)} \cap E^{(j_2,k)}$.  Since $\lambda_\Gamma(t)$ is almost constant on each interval $[j,j+1]$, $T_j^\theta \chi_E \sim_d \lambda_\Gamma(j)^\theta T_j^0\chi_E$, for any set $E$.  Thus
\begin{align*}
 \langle T_j^\theta \chi_{E_k},\chi_F \rangle \sim_d (\langle T_j^1 \chi_{E_k},\chi_F\rangle)^\theta(\langle T_j^0\chi_{E_k},\chi_F\rangle)^{1-\theta} 
\lesssim_d (\langle T_j^1 \chi_{E_k},\chi_F\rangle)^\theta |F|^{1-\theta}.
\end{align*}
Thus if $j \in \scriptJ_\eps(k)$, with a little more arithmetic, we arrive at
$$
\langle T_j^1\chi_{E_k},\chi_F \rangle \gtrsim_{d,\theta} \eps^{\frac1\theta}|E_k|^{\frac1{p_1}}|F|^{\frac1{q_1'}}.
$$
By the containment $F^{(j,k)} \subseteq F$, the definition of $E^{(j,k)}$, the near constancy of $\lambda_\Gamma$ on $[j,j+1]$, and the previous observations, for $y \in G$
$$
(T_{j_i}^1)^*\chi_F(y) \gtrsim_{d,\theta}  \tfrac{\langle T_{j_i}^1\chi_{E_k},\chi_{F^{(j,k)}}\rangle}{|E_k|} \gtrsim_{d,\theta} \tfrac{\langle T_{j_i}^1\chi_{E_k},\chi_{F}\rangle}{|E_k|} \gtrsim_{d,\theta} \eps^{\frac1\theta}\frac{|F|^{\frac1{q_1'}}}{|E_k|^{\frac1{p_1'}}} =: \beta_i.
$$
Additionally,
\begin{align*}
\alpha_i &:= \frac{\langle T_{j_i}^1 \chi_G, \chi_F \rangle}{|F|} \gtrsim_{d,\theta} \frac{\beta_i|G|}{|F|} = \eps^{\frac1\theta} \frac{|G|}{|F|^{\frac1{q_1}}|E_k|^{\frac1{p_1'}}} \\
&\gtrsim_{d,\theta} \eps^{\frac1\theta}\rho_1^2 \frac{|E_k|^{\frac1{p_1}}}{|F|^{\frac1{q_1}}} 
\gtrsim_{d,\theta} \eps^{\frac1\theta + 2p_\theta}\frac{|E_k|^{\frac1{p_1}}}{|F|^{\frac1{q_1}}}.
\end{align*}
By Lemma~\ref{L:lbF0} and a little algebra, if $|j_1-j_2| \geq 2$, 
$$
1 \gtrsim_{d,\theta} \eps^{\frac{d(d+1)}{2\theta}+d(d-1)p_\theta} e^{c_d(a_d-a_1)|j_1-j_2|}.
$$
Since $a_d > a_1$ by assumption, this gives a contradiction if the constant $C$ in the separation condition above \eqref{E:sum Es} is larger than a dimensional constant times $\tfrac1{|a_d-a_1|}$.  Thus we must have that $\sum_{j \in \scriptJ}|E^{(j,k)}| \leq 4 |E_k|$.  

Now we turn to \eqref{E:sum Fs}.  Arguing as before, if $\sum_{(j,k) \in \scriptL} |F^{(j,k)}| > 4 |F|$, there exist $(j_1,k_1) \neq (j_2,k_2)$ such that 
$$
\rho_2^2|F| \lesssim_d |F^{(j_1,k_1)}\cap F^{(j_2,k_2)}|.
$$
Let $H = F^{(j_1,k_1)}\cap F^{(j_2,k_2)}$.  Arguing in a similar manner as before, for $x \in H$,
$$
T_{j_i}^1 \chi_{E_{k_i}}(x) \gtrsim_{d,\theta} \eps^{\frac 1\theta}\frac{|E_{k_i}|^{\frac 1{p_1}}}{|F|^{\frac 1{q_1}}} =: \alpha_i
$$
and
$$
\beta_i := \frac{\langle T_{j_i}^1\chi_{E_{k_i}},\chi_G\rangle}{|E_{k_i}|} \gtrsim_{d,\theta}\eps^{\frac 1\theta+2q_\theta'}\frac{|F|^{\frac1{q_1'}}}{|E_{k_i}|^{\frac 1{p_1'}}}.
$$
Applying Lemma~\ref{L:lbE0} this time, and performing the necessary arithmetic,
\begin{gather*}
|E_{k_2}|^{\frac\delta{p_1}} \gtrsim_{d,\theta} e^{(a_d-a_1)|j_1-j_2|} \eps^{\frac{d(d+1)}{2\theta}+2(d-1)q_\theta'}|E_{k_1}|^{\frac \delta{p_1}}\\
|E_{k_1}|^{\frac\delta{p_1}} \gtrsim_{d,\theta} e^{(a_d-a_1)|j_1-j_2|} \eps^{\frac{d(d+1)}{2\theta}+2(d-1)q_\theta'}|E_{k_2}|^{\frac \delta{p_1}}.
\end{gather*}
If $|k_1-k_2| \geq C_{d,\theta}\log(1+\eps^{-1})$, we ignore the exponential terms (which only help us find a contradiction), and derive a contradiction from the fact that $|E_{k_j}| \sim \eta 2^{-k_j p_\theta}$.  Therefore $k_1=k_2$ and $|j_1-j_2| \geq \tfrac{C_{d,\theta}}{a_d-a_1}$, so
$$
1 \gtrsim_{d,\theta} e^{(a_d-a_1)|j_1-j_2|} \eps^{\frac 1\theta(\frac{d(d+1)}2+2(d-1)q_\theta'},
$$
and again, we arrive at a contradiction for $C_{d,\theta}$ sufficiently large, so \eqref{E:sum Fs} must hold.  

Finally, we prove \eqref{E:card J}.  Inequalities \eqref{E:sum Es} and \eqref{E:sum Fs} imply that
\begin{equation} \label{E:disjoint card J}
\begin{gathered}
\sum_{j \in \scriptJ_{\eps,\rho_1,\rho_2}(k)}|E^{(j,k)}| \lesssim_{d,\theta} \tfrac{1}{a_d-a_1} \log(1+\eps^{-1})|E_k|, \\
 \sum_{j \in \scriptJ_{\eps,\rho_1,\rho_2}(k)} |F^{(j,k)}| \lesssim_{d,\theta} \tfrac{1}{a_d-a_1} \log(1+\eps^{-1})^2|F|.
 \end{gathered}
\end{equation}
This implies that $(\rho_1+\rho_2)\#\scriptJ_{\eps,\rho_1,\rho_2}(k) \lesssim_{d,\theta} \tfrac{1}{a_d-a_1} \log(1+\eps^{-1})^2$.  Since
$$
2 \geq (\rho_1+\rho_2) \geq (\rho_1^{\frac1{p_\theta}}\rho_2^{\frac1{q_\theta'}})^{\frac1{1+p_\theta^{-1}-q_\theta^{-1}}}\gtrsim_{d,\theta} \eps^{\frac1{1+p_\theta^{-1}-q_\theta^{-1}}},
$$
this implies that 
\begin{equation} \label{E:card J'}
\#\scriptJ_{\eps,\rho_1,\rho_2}(k) \lesssim_{d,\theta} \tfrac1{a_d-a_1} \log(1+ \eps^{-1})^2\eps^{-\frac1{1+p_\theta^{-1}-q_\theta^{-1}}},
\end{equation}
and, in light of \eqref{E:compare rho eps}, inequality \eqref{E:card J} follows by summing \eqref{E:card J'} over dyadic values of $\rho_1$ and $\rho_2$.  

This completes the proof of Lemma~\ref{L:bound cardJ}.
\end{proof}

Now we finish proving the weak type bound.  Fix $\eta,\eps$.  By the definitions of $\scriptJ_\eps(k)$ and $\scriptK_\eta$, \eqref{E:card J}, and the trivial estimate $\#\scriptK_\eta \leq 2\eta^{-1}$,
\begin{align*}
&\sum_{k \in \scriptK_\eta}\sum_{j \in \scriptJ_\eps(k)}2^k \langle T_j^\theta \chi_{E_k},\chi_F\rangle \sim_d \sum_{k \in \scriptK_\eta}\sum_{j \in \scriptJ_\eps(k)} \sim \eps \eta^{\frac 1{p_\theta}} |F|^{\frac 1{q_\theta'}}\\
&\qquad \lesssim_{d,\theta,a_d-a_1} \sum_{k \in \scriptK_\eta}(\log(1+\eps^{-1}))^3 \eps^{\frac{p_\theta^{-1}-q_\theta^{-1}}{1+p_\theta^{-1}-q_\theta^{-1}}} \eta^{\frac 1{p_\theta}}|F|^{\frac 1{q_\theta'}}\\
&\qquad \lesssim_{d,\theta,a_d-a_1} (\log(1+\eps^{-1}))^3 \eps^{\frac{p_\theta^{-1}-q_\theta^{-1}}{1+p_\theta^{-1}-q_\theta^{-1}}}\eta^{-\frac1{p_\theta'}}|F|^{\frac1{q_\theta'}}.
\end{align*}
Next, by \eqref{E:Ejk Fjk big} and the restricted weak type bound, H\"older's inequality, inequality \eqref{E:disjoint card J0}, H\"older's inequality and the fact that $p_\theta<q_\theta$, and the definition of $\scriptK_\eta$,
\begin{equation}\label{E:pos eta}
\begin{aligned}
&\sum_{k \in \scriptK_\eta}\sum_{j \in \scriptJ_\eps(k)} 2^k \langle T_j^\theta \chi_{E_k}, \chi_F \rangle
\lesssim_d \sum_{k \in \scriptK_\eta}\sum_{j \in \scriptJ_\eps(k)}2^k |E^{(j,k)}|^{\frac1{p_\theta}}|F^{(j,k)}|^{\frac1{q_\theta'}}\\
&\qquad \lesssim_d \bigl(\sum_{k \in \scriptK_\eta}\sum_{j \in \scriptJ_\eps(k)} 2^{kq_\theta}|E^{(j,k)}|^{\frac{q_\theta}{p_\theta}}\bigr)^{\frac1{q_\theta}}\bigl(\sum_{k \in \scriptK_\eta}\sum_{j \in \scriptJ_\eps(k)}|F^{(j,k)}|\bigr)^{\frac1{q_\theta'}}\\
&\qquad \lesssim_{d,\theta,a_d-a_1} (\log(1+\eps^{-1}))^{c_{d,\theta}}
\sup_{k \in \scriptK_\eta, j \in \scriptJ_\eps(k)}(2^{kp_\theta}|E_k|)^{\frac1{p_\theta}-\frac1{q_\theta}}\\
&\qquad\qquad\qquad \qquad \times \bigl(\sum_{k \in \scriptK_\eta}2^{kp_\theta}|E_k|\bigr)^{\frac1{q_\theta}}|F|^{\frac1{q_\theta'}}\\
&\qquad \lesssim_{d,\theta,a_d-a_1} (\log(1+\eps^{-1}))^{c_{d,\theta}} \eta^{\frac1{p_\theta}-\frac1{q_\theta}}|F|^{\frac1{q_\theta'}}.
\end{aligned}
\end{equation}

Combining the previous two estimates,
\begin{equation} \label{E:wt precursor}
\sum_{k \in \scriptK_\eta}\sum_{j \in \scriptJ_\eps(k)} 2^k \langle T_j^\theta \chi_{E_k},\chi_F\rangle \lesssim \eps^a \eta^b |F|^{\frac1{q_\theta'}},
\end{equation}
for some $a=a_{d,\theta}>0,b=b_{d,\theta}>0$.  Summing on dyadic values of $\eta \leq 1$ and $\eps \lesssim_{d,\theta} 1$ gives the weak type bound.  

Now we turn to the strong type bound.  The argument is similar, so we simply sketch it.  It suffices to prove that $\langle T^\theta f,g\rangle \lesssim_{d,\theta,a_d-a_1} 1$ when $f = \sum_k 2^k \chi_{E_k}$ and $g = \sum_l 2^l \chi_{F_l}$, where the $E_k$, and likewise the $F_l$, are pairwise disjoint Borel sets and $\tfrac12 < \|f\|_{L^{p_\theta}} \leq 1$, $\tfrac12 < \|g\|_{L^{q_\theta'}} \leq 1$.  

For $\eta_1,\eta_2,\eps>0$, we define $\scriptK_{\eta_2}$ as above, and define
\begin{gather*}
\scriptL_{\eta_1} = \{l \in \Z : \eta_1 < 2^{lq_\theta'}|F_l| \leq \eta_1\}, \\
 \scriptJ_\eps(k,l) := \{j \in \Z : \tfrac12\eps |E_k|^{\frac1{p_\theta}}|F_l|^{\frac1{q_\theta'}} < \langle T_j^\theta \chi_{E_k},\chi_{F_l}\rangle \leq \eps |E_k|^{\frac1{p_\theta}}|F_l|^{\frac1{q_\theta'}}\}.
 \end{gather*}
Then
$$
\langle Tf,g\rangle = \sum_{\eta_1, \eta_2,\eps} \sum_{l \in \scriptL_{\eta_1}}\sum_{k \in \scriptK_{\eta_2}}\sum_{j \in \scriptJ_\eps(k,l)} 2^{k+l} \langle T_j^\theta \chi_{E_k}, \chi_{F_l}\rangle,
$$
where the sum is taken over dyadic values of $\eta_1,\eta_2,\eps$ with $0 < \eta_1,\eta_2 \leq 1$ and $0 < \eps \lesssim_{d,\theta} 1$.  

By \eqref{E:wt precursor}, then the definition of $\scriptL_{\eta_1}$ combined with the trivial bound $\#\scriptL_{\eta_1} \lesssim_d \eta_1^{-1}$,
\begin{align*}
\sum_{l \in \scriptL_{\eta_1}}\sum_{k \in \scriptK_{\eta_2}}\sum_{j \in \scriptJ_\eps(k,l)} 2^{k+l} \langle T_j^\theta \chi_{E_k},\chi_{F_l}\rangle 
&\lesssim_{d,\theta,a_d-a_1} \sum_{l \in \scriptL_{\eta_1}} \eps^a \eta_1^b 2^l |F_l|^{\frac1{q_\theta'}} \\
&\lesssim_{d,\theta,a_d-a_1} \eps^a \eta_1^{-\frac1{q_\theta'}}\eta_2^b.
\end{align*}

Now we seek a bound with a positive power of $\eta_1$.  Define
\begin{align*}
F^{(j,k,l)} &= \{x \in F_l : T_j^\theta \chi_{E_k}(x) \geq \tfrac12 \tfrac{\langle T_j^\theta \chi_{E_k},\chi_{F_l}\rangle}{|F_l|}\}\\
E^{(j,k,l)} &= \{y \in E_k : (T_j^\theta)^* \chi_{F^{(j,k,l)}}(y) \geq \tfrac12 \tfrac{\langle T_j^\theta \chi_{E_k}, \chi_{F^{(j,k,l)}}\rangle}{|E_k|}\}.
\end{align*}
By Lemma~\ref{L:bound cardJ}, for each $l$,
$$
\sum_{k \in \scriptK_{\eta_1}}\sum_{j \in \scriptJ_\eps(k,l)} |F^{(j,k,l)}| \lesssim_{d,\theta,a_d-a_1} (\log(1+\eps^{-1}))^4|F_l|.
$$
Similar arguments show that
$$
\sum_{l \in \scriptL_{\eta_2}}\sum_{j \in \scriptJ_\eps(k,l)}|E^{(j,k,l)}| \lesssim_{d,\theta,a_d-a_1} (\log(1+\eps^{-1}))^4 |E_k|.
$$
Arguing similarly to \eqref{E:pos eta},
\begin{align*}
&\sum_{l \in \scriptL_{\eta_1}}\sum_{k \in \scriptK_{\eta_2}}\sum_{j \in \scriptJ_{\eps}(k,l)} 2^{k+l} \langle T_j^\theta \chi_{E_k},\chi_{F_l}\rangle \\
&\qquad \lesssim_{d,\theta} \bigl(\sum_{l \in \scriptL_{\eta_1}}\sum_{k \in \scriptK_{\eta_2}}\sum_{j \in \scriptJ_\eps(k,l)} 2^{lp_\theta'}|F^{(j,k,l)}|^{\frac{p_\theta'}{q_\theta'}}\bigr)^{\frac1{p_\theta'}} \\
& \qquad \qquad \qquad \times \bigl(\sum_{k \in \scriptK_{\eta_2}}\sum_{l \in \scriptL_{\eta_1}}\sum_{j \in \scriptJ_\eps(k,l)} 2^{kp_\theta}|E^{(j,k,l)}|\bigr)^{\frac1{p_\theta}}\\
&\qquad \lesssim_{d,\theta,a_d-a_1} (\log(1+\eps^{-1}))^{c_{d,\theta}}\eta_1^{\frac1{q_\theta'}-\frac1{p_\theta'}},
\end{align*}
which has the positive power of $\eta_1$ that we wanted.  

Combining our two upper bounds, 
$$
\sum_{l \in \scriptL_{\eta_1}}\sum_{k \in \scriptK_{\eta_2}}\sum_{j \in \scriptJ_{\eps}(k,l)} 2^{k+l} \langle T_j^\theta \chi_{E_k},\chi_{F_l}\rangle \lesssim_{d,\theta,a_d-a_1} \eps^a \eta_1^b\eta_2^c,
$$
for constants $a=a_{d,\theta}>0$, $b=b_{d,\theta}>0$, $c=c_{d,\theta}>0$.  Summing on dyadic values of $\eta_1,\eta_2,\eps$ gives the strong type bound.  
\end{proof}

\subsection*{Special case:  polynomial curves}

In \cite{DLW, oberlin-polynomial, BSjfa}, uniform endpoint strong type bounds are obtained for convolution with affine arclength measure on polynomial curves.  We can use the techniques of this section together with a geometric inequality for polynomial curves from \cite{DW} to prove uniform fractional integral analogues.  

\begin{proposition} \label{frac-poly}
Let $\gamma:\R\to\R^d$ be a polynomial curve of degree $N$ and let $Z_\gamma$ be any finite set containing the real parts of the complex zeroes of $L_\gamma$. Then the operator
$$
T^\theta_\gamma f(x) = \int_\R f(x-\gamma(t))\, \lambda_\gamma(t)^{\theta} \, \tfrac{dt}{\tnorm{dist}(t,Z_\gamma)^{1-\theta}}
$$
satisfies
\begin{equation} \label{E: conv poly}
\|T^\theta_\gamma f\|_{L^q(\R^d)}\lesssim  \|f\|_{L^p(\R^d)}, 
\end{equation}
for all $0 < \theta \leq 1$ and $(p^{-1},q^{-1})$ on the line segment with endpoints $(\tfrac\theta{p_d},\tfrac\theta{q_d})$ and $(1-\tfrac\theta{p_d},1-\tfrac\theta{q_d})$.  The implicit constant in \eqref{E: conv poly} depends only on $d$, $\theta$, and $N$, and $\#Z_\gamma$.  
\end{proposition}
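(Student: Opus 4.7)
The plan is to adapt the strategy behind Theorem~\ref{T:convl st}, substituting a dyadic decomposition around the finite set $Z_\gamma$ for the exponential parametrization and using the polynomial geometric inequality of \cite{DW},
$$
|J_\gamma(t_1,\ldots,t_d)|\gtrsim \prod_{i=1}^d|L_\gamma(t_i)|^{\frac1d}\prod_{i<j}|t_i-t_j|,
$$
(with constant depending only on $d$ and $N$), in place of the exponential inequality of Proposition~\ref{P:convl exp gi}. The endpoint strong-type result for affine arclength measure on polynomial curves from \cite{DLW,oberlin-polynomial,BSjfa} supplies both the $\theta=1$ base case and the uniform local bound used on each dyadic piece.

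First I would partition $\R\setminus Z_\gamma=\bigsqcup_\ell J_\ell$ into $O(\#Z_\gamma)$ half-intervals chosen so that $\dist(t,Z_\gamma)\sim |t-z_\ell|$ on each $J_\ell$ for a distinguished endpoint $z_\ell\in Z_\gamma\cup\{\pm\infty\}$. Inside each $J_\ell$ I would perform a further dyadic decomposition $J_\ell=\bigsqcup_n I_{\ell,n}$, with $|t-z_\ell|\sim 2^n$ on $I_{\ell,n}$, so that the fractional weight $\dist(t,Z_\gamma)^{\theta-1}$ is essentially the constant $2^{n(\theta-1)}$ there; these dyadic indices $n$ play the role of the integer $j$ that labels the unit pieces $[j,j+1]$ in the exponential parametrization of Section~\ref{S:ST}. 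On each $I_{\ell,n}$, convolution against $\lambda_\gamma\,dt$ is of strong type $(p_d,q_d)$ with uniform constants depending only on $d$ and $N$, giving the local building block.

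With these pieces in hand, I would run the extended refinement scheme culminating in the analogues of Lemmas~\ref{L:lbE0},~\ref{L:lbF0},~\ref{L:lbE}, and~\ref{L:lbF} of Section~\ref{S:ST}, keeping the parameter-ordering device there in order to avoid the band-structure step of \cite{ChCCC}. The polynomial geometric inequality supplies the lower bound on $|E_2|$ or $|F_2|$ in terms of the refinement parameters $\alpha,\beta$, while the role played in Theorem~\ref{T:convl st} by the exponential factor $e^{c_d(a_d-a_1)|t_i-t_j|}$ of \eqref{E:convl exp gi} is taken over by the bare polynomial separation $|t_i-t_j|\gtrsim 2^{\max(n_i,n_j)}$ between points in distinct dyadic shells. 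This should yield the strong-type bound at the endpoint $(p_d/\theta,q_d/\theta)$; the symmetric endpoint $(1-\theta/p_d,1-\theta/q_d)$ is obtained by exchanging the roles of $E$ and $F$ throughout the refinement, and Riesz--Thorin interpolation then fills in the line segment between them.

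The principal obstacle I anticipate is closing the double summation over the dyadic scale $n$ and the interval index $\ell$. In Theorem~\ref{T:convl st} this was trivial because of the exponential gain $e^{c_d(a_d-a_1)|t_i-t_j|}$; here I must track more delicately the interplay between the polynomial separation between dyadic shells, the factor $2^{n(\theta-1)}$ coming from $\dist^{\theta-1}$, and the $\theta$-dependent powers of $\alpha,\beta$ appearing in the analogues of \eqref{E:separation lbE}--\eqref{E:separation lbF}. The polynomial separation should provide a single dyadic scale of slack that, coupled with the strict convexity of the Lebesgue exponents provided by $\theta<1$, suffices to produce the claimed bounds with implicit constants depending only on $d$, $\theta$, $N$, and $\#Z_\gamma$.
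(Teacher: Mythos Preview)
Your outline is in the right spirit, but the paper dispatches the very obstacle you flag as ``principal'' by a single clean substitution you have not made: after decomposing into intervals via \cite{DW}, the paper further decomposes (by an elementary polynomial lemma) so that on each piece not only $\dist(t,Z_\gamma)\sim|t-a|$ but in fact $|L_\gamma(t)|\sim C|t-a|^k$ for some $a\in Z_\gamma$ and integer $k$.  Then, rather than carrying a dyadic decomposition around $a$ through all of Section~\ref{S:ST}, the paper translates $a$ to $0$ and reparametrizes exponentially, setting $\Gamma(t)=\gamma(e^{-t})$.  The polynomial geometric inequality transforms under this change of variables, and the elementary bound
\[
|e^{-t_i}-e^{-t_j}|\gtrsim e^{-(t_i+t_j)/2}\,e^{\frac14|t_i-t_j|}\,|t_i-t_j|
\]
recovers exactly the structure of Proposition~\ref{P:convl exp gi} (only without the $a_d-a_1$ in the exponent, which is harmless).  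At that point the machinery of Section~\ref{S:ST} applies verbatim; nothing new has to be proved.

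Your dyadic-shell scheme is morally the same object---your shells $I_{\ell,n}$ correspond to unit intervals in the exponential coordinate---but you have not actually closed the sum over $n$, and the sentence ``should provide a single dyadic scale of slack that, coupled with the strict convexity $\ldots$ provided by $\theta<1$'' is a hope, not an argument.  The exponential reparametrization is precisely what converts your polynomial separation $|t_i-t_j|\gtrsim 2^{\max(n_i,n_j)}$ into the exponential gain $e^{c|n_i-n_j|}$ that Lemmas~\ref{L:lbE0}--\ref{L:lbF0} consume; once you see that, there is no obstacle at all, and no appeal to $\theta<1$ is needed.  You are also missing the intermediate step $|L_\gamma(t)|\sim C|t-a|^k$, without which $\lambda_\gamma$ need not be essentially constant on your dyadic pieces and the local estimates do not mesh with the weight $\dist(t,Z_\gamma)^{\theta-1}$ in the way you assume.
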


A related estimate will also appear in \cite{BSpolyRest}.

\begin{proof}
It was proved in \cite{DW} that we may decompose $\R$ as a union of intervals $\R = \bigcup_{j=1}^{C_{d,N}}$ in such a way that for each $j$ and $(t_1,\ldots,t_d) \in I_j^d$,
$$
|J_\gamma(t_1,\ldots,t_d)| \gtrsim \prod_{j=1}^d|L_\gamma(t_j)|^{\frac1d} \prod_{i<j} |t_i-t_j|,
$$
with implicit constants depending only on $d,N$.  By the triangle inequality, it suffices to prove the estimates for $\gamma|_I$ for a single interval $I=I_j$ as above.  We will use the following simple and widely-known polynomial lemma (cf.\ \cite{DW}).

\begin{lemma}  We may decompose $I = \bigcup_{j=1}^{C(\#Z_\gamma)} I_j$ so that for $t \in I_j$, $\dist(t,Z_\gamma) = |t-a(I_j)|$ and $|L_\gamma(t)| \sim C_\gamma (I_j) |t-a(I_j)|^{k(I_j)}$.  Here $a(I_j) \in Z_\gamma$ and $0 \leq k(I_j) \leq \deg L_\gamma$, and the implicit constants depend only on the degree of $L_\gamma$.  
\end{lemma}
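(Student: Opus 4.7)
My plan is to produce the decomposition in two stages: a coarse stage depending only on $Z_\gamma$ to pin down the nearest point $a(I_j)$, and a fine stage depending on the complex roots of $L_\gamma$ to achieve the monomial-type comparison.

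First I would factor $L_\gamma(t) = c\prod_{i=1}^N(t - \zeta_i)$ over $\C$, writing $\zeta_i = r_i + is_i$ with $r_i,s_i \in \R$ and $N = \deg L_\gamma$; by hypothesis each $r_i \in Z_\gamma$.  Since $|L_\gamma(t)|^2 = c^2\prod_{i=1}^N\bigl((t-r_i)^2 + s_i^2\bigr)$, the task reduces to controlling each factor $|t - \zeta_i|$ uniformly on the pieces to come.  For the coarse stage, I would split $I$ at every element of $Z_\gamma$ and at the midpoint of every pair of consecutive elements of $Z_\gamma$, producing at most $2\#Z_\gamma + 1$ open subintervals $J$; on each such $J$ the distance $\dist(t,Z_\gamma)$ is realised at a unique endpoint $a(J) \in Z_\gamma$, and $\dist(t,Z_\gamma) = |t - a(J)|$ is a monotone function of $t$ on $J$.

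For the fine stage, I would fix $J$ with $a = a(J)$, set $\rho_i := 2\max(|r_i - a|, |s_i|)$ for each $i$, and further subdivide $J$ at each point $t \in J$ satisfying $|t - a| = \rho_i$.  This adds at most $N$ split points, yielding at most $N+1$ sub-pieces $J'$.  On each $J'$ and for each $i$ separately, exactly one of the following holds throughout $J'$: either $|t - a| \geq \rho_i$ (Case~(i)) or $|t - a| \leq \rho_i$ (Case~(ii)).

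The key computation will be a dichotomy for the individual factors on $J'$.  In Case~(i), the thresholds force $|r_i - a|, |s_i| \leq \tfrac12|t-a|$, so a direct expansion gives $\tfrac12|t-a| \leq |t - \zeta_i| \leq \sqrt{5/2}\,|t-a|$.  In Case~(ii), the upper bound $|t - \zeta_i| \leq \sqrt{5/2}\,\rho_i$ is immediate from the triangle inequality; for the matching lower bound, when $|s_i| \geq |r_i - a|$ I would use $|t - \zeta_i| \geq |s_i| = \tfrac12\rho_i$, and when $|s_i| < |r_i - a|$ I would use that $r_i \in Z_\gamma$ and $a$ is the nearest element of $Z_\gamma$ to every $t \in J$, so $t$ must lie on $a$'s side of the midpoint between $a$ and $r_i$, forcing $|t - r_i| \geq \tfrac12|r_i - a| = \tfrac14\rho_i$.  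Hence on $J'$ every factor $|t - \zeta_i|$ is comparable either to $|t - a|$ or to the constant $\rho_i$, with absolute implicit constants.

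Multiplying over $i$ then gives $|L_\gamma(t)| \sim C_\gamma(J')\,|t - a|^{k(J')}$, where $k(J') \in \{0, 1, \dots, N\}$ counts the Case~(i) indices and $C_\gamma(J') = |c|\prod_{i\text{ in Case (ii)}}(\rho_i/2)$; the overall implicit constant depends only on $N$.  The total number of pieces is bounded by $(2\#Z_\gamma+1)(N+1)$, of the form $C(\#Z_\gamma)$ claimed (with dependence on $N$ absorbed into the constant).  The main obstacle is the lower bound in Case~(ii) when $|s_i| < |r_i - a|$: this is precisely where I need both the hypothesis $r_i \in Z_\gamma$ and the fact that the coarse subdivision at midpoints has singled out $a$ as the unique nearest element of $Z_\gamma$ to $J$.
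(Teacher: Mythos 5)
Your argument is correct and takes essentially the same route as the paper's: both proofs first localize so that the nearest element of $Z_\gamma$ is a single fixed point $a$, and then subdivide further according to how $|t-a|$ compares with a scale attached to each complex root, so that every factor of $|L_\gamma|$ becomes comparable either to $|t-a|$ or to a constant, with the lower bound $|t-r_i|\geq \tfrac12|r_i-a|$ supplied by the hypothesis that $r_i\in Z_\gamma$ and $a$ is the nearest element of $Z_\gamma$. The only cosmetic difference is that you run the dichotomy factor-by-factor using the threshold $\rho_i = 2\max(|r_i-a|,|s_i|)$, whereas the paper replaces $|t-z_j|$ by $|t-a_j|+|b_j|$, expands the product into a sum of real monomials, and decomposes so that a single term dominates.
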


The proof is very short, so we include it here.

\begin{proof}
Making an initial decomposition if necessary, we may assume that $\dist(t,Z_\gamma) \linebreak = |t-a_0|$, $t \in I$.  Translating if needed, we may assume that $a_0=0$, and by reordering, $Z_\gamma = \{a_0,a_1,\ldots,a_M\}$, where $0=|a_0| \leq |a_1| \leq \cdots \leq |a_M|$.  For convenience, set $a_{M+1}=\infty$.  Define
$$
A_j = \{t \in I : \tfrac12|a_j| \leq |t| \leq \tfrac12|a_{j+1}|\}, \qquad 0 \leq j \leq M,
$$
and observe that $I = \bigcup A_j$.  

Repeating some of the $a_j$ if necessary, we may write $P(t) = C \prod_{j=1}^M (t-z_j)^{k_j}$, where $a_j = \rm{Re}\, z_j$ and the $k_j$ are allowed to be zero.  Set $b_j = \rm{Im}\, z_j$.  Since $|t-z_j| \sim |t-a_j| + |b_j|$, $|P(t)| \sim \sum_{i=1}^{C(\#Z_\gamma)} C_i \prod_{j=1}^M|t-a_j|^{k_{j,i}}$.  Decomposing further if needed, we may assume that a single one of the polynomials on the right is largest on $I$, $P(t) \sim C \prod_{j=1}^M|t-a_j|^{k_j}$.  But then on $A_j$,
$$
|P(t)| \sim C \prod_{i=1}^j |t|^{k_i} \prod_{i=j+1}^M |a_i|^{k_i}.
$$
\end{proof}

It now suffices to prove the estimates in the proposition on a single one of the intervals from the lemma, which we also denote $I$.  Translating and reflecting if necessary, we may assume that $b(I) = 0$ and $I \subseteq [0,\infty)$.  In summary, on $I$, 
$$
|L_\gamma(t)| \sim C_\gamma t^k, \qquad |J_\gamma(t_1,\ldots,t_d)| \gtrsim C_\gamma \prod_{j=1}^d |t_j|^{k/d}\prod_{i<j} |t_i-t_j|.
$$
Reparametrizing so that $\Gamma(t) = \gamma(e^{-t})$, these estimates immediately imply that on $J:= e^{-I}$, 
$$
|L_\Gamma(t)| \sim C_\gamma e^{-(k+\tfrac{d^2+d}2)t}, \quad |J_\Gamma(t_1,\ldots,t_d)| \gtrsim C_\gamma \exp[-(\tfrac kd+d)\sum_i t_i] \prod_{i<j} |e^{-t_i}-e^{-t_j}|.
$$
Using the elementary estimate 
$$
|e^{-t_i}-e^{-t_j}| \gtrsim \exp[-\tfrac{t_i+t_j}2]\exp[\tfrac14|t_i-t_j|]|t_i-t_j|,
$$
we obtain the lower bound in Proposition~\ref{P:convl exp gi}, except without the gain of $a_d-a_1$ in the exponent.  Nevertheless, this is sufficient to apply the techniques of this section and obtain the proposition.
\end{proof}

One could also obtain analogues of the above proposition and lemma for $\gamma$ a rational curve. 

Proposition~\ref{frac-poly} gives a simple result bounding the unweighted operator
$$
S_\gamma f(x) = \int_\R f(x-\gamma(t))\, dt.
$$
Let $N_{\rm{loc}}$ equal the maximum order of vanishing of $L_\gamma$ on $\R$ and $N_{\rm{glob}}$ equal the degree of $L_\gamma$.  Let $\theta_\bullet = (1+\tfrac{2N_\bullet}{d(d+1)})^{-1}$.  Observe that $\theta_{\rm{glob}} \leq \theta_{\rm{loc}}$ and that if $\theta_{\rm{glob}} \leq \theta \leq \theta_{\rm{loc}}$, 
$$
|\lambda_\gamma(t)|^\theta \dist(t,Z_\gamma)^{\theta-1} \gtrsim_\gamma 1.
$$
Thus we obtain the following.

\begin{corollary}  Let $\gamma:\R \to \R^d$ be a polynomial curve of degree $N$.  The unweighted operator
$$
\|S_\gamma f\|_{L^q(\R^d)} \lesssim_\gamma \|f\|_{L^p(\R^d)},
$$
for all $(p^{-1},q^{-1})$ in the convex hull of the points
$$
(\tfrac{\theta_{\rm{loc}}}{p_d}, \tfrac{\theta_{\rm{loc}}}{q_d}), \quad (\tfrac{\theta_{\rm{glob}}}{p_d}, \tfrac{\theta_{\rm{glob}}}{q_d}), \quad (1-\tfrac{\theta_{\rm{loc}}}{q_d}, 1-\tfrac{\theta_{\rm{loc}}}{p_d}), \quad  (1-\tfrac{\theta_{\rm{glob}}}{q_d}, 1-\tfrac{\theta_{\rm{glob}}}{p_d}).
$$
\end{corollary}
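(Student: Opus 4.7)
The plan is to combine a pointwise domination with the bounds for $T_\gamma^\theta$ provided by Proposition~\ref{frac-poly} and then interpolate. The key observation, highlighted in the paragraph preceding the corollary, is the pointwise lower bound
$$
\lambda_\gamma(t)^\theta \, \dist(t,Z_\gamma)^{\theta-1} \gtrsim_\gamma 1, \qquad t \in \R \setminus Z_\gamma, \quad \theta_{\rm{glob}} \leq \theta \leq \theta_{\rm{loc}}.
$$
Once this is established, for $f \geq 0$ and any such $\theta$ one has $S_\gamma f(x) \lesssim_\gamma T_\gamma^\theta f(x)$, so every $L^p \to L^q$ bound for $T_\gamma^\theta$ transfers to $S_\gamma$.

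To verify the pointwise bound I would invoke the polynomial decomposition from the lemma preceding Proposition~\ref{frac-poly}, possibly after a further refinement so that on each bounded subinterval $I_j$ only a single real zero of $L_\gamma$ contributes to the local approximation $|L_\gamma(t)| \sim C(I_j)\, |t - a(I_j)|^{k(I_j)}$. On such a bounded piece the weight is comparable to a constant times $|t - a(I_j)|^{\theta(1 + 2k(I_j)/(d(d+1))) - 1}$; since $k(I_j) \leq N_{\rm{loc}}$ and $\theta \leq \theta_{\rm{loc}}$, the exponent is nonpositive, so the weight stays $\gtrsim_\gamma 1$ as $t \to a(I_j)$ and remains bounded below on the whole bounded piece. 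On the two unbounded pieces reaching $\pm\infty$ one has $k(I_j) = N_{\rm{glob}}$, and now the exponent is nonnegative precisely because $\theta \geq \theta_{\rm{glob}}$, which yields the lower bound there. Away from both the real zeros of $L_\gamma$ and infinity, both $|L_\gamma|$ and $\dist(\cdot,Z_\gamma)$ are bounded above and below, so the inequality is trivial.

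With the pointwise domination in hand, Proposition~\ref{frac-poly} applied at $\theta = \theta_{\rm{loc}}$ and at $\theta = \theta_{\rm{glob}}$ (using both the stated estimate and its dual, obtained from the essential self-adjointness of $T_\gamma^\theta$) yields boundedness of $S_\gamma$ at each of the four extreme points listed in the corollary. The full statement then follows from the linearity of $S_\gamma$ and Riesz--Thorin interpolation over the convex hull of those four points.

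The main obstacle is the careful verification of the pointwise lower bound, and in particular arranging the decomposition so that on each bounded piece the local exponent $k(I_j)$ is genuinely controlled by $N_{\rm{loc}}$ rather than only by the global degree $N_{\rm{glob}}$; this is the one place where the distinction between $\theta_{\rm{loc}}$ and $\theta_{\rm{glob}}$ becomes essential, and it is what forces one to separate real zeros of $L_\gamma$ before applying the decomposition lemma.
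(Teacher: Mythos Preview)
Your proposal is correct and follows exactly the approach the paper takes: the paper simply records the pointwise lower bound $\lambda_\gamma(t)^\theta \dist(t,Z_\gamma)^{\theta-1} \gtrsim_\gamma 1$ for $\theta_{\rm glob} \le \theta \le \theta_{\rm loc}$ and then says ``Thus we obtain the following,'' which is precisely your domination-plus-Proposition~\ref{frac-poly}-plus-interpolation argument. One minor simplification: you do not need the decomposition lemma to verify the pointwise bound --- since $\gamma$ is fixed, it suffices to check the asymptotics of the weight near each point of $Z_\gamma$ (where the exponent of $|t-a|$ is governed by the local vanishing order $\le N_{\rm loc}$) and near $\pm\infty$ (where it is governed by $N_{\rm glob}$), so your concern about arranging $k(I_j)\le N_{\rm loc}$ on bounded pieces is moot.
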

The dependence of the implicit constant on $\gamma$ is unavoidable because of the lack of scale invariance.  This is the sharp Lebesgue space result.  One could use a similar argument obtain optimal Lebesgue space bounds for convolution with Euclidean arclength on $\gamma$, but the exponents would not be quite as simple.  

\section{Proofs of the geometric inequalities and other loose ends} \label{S:gi proofs}

Finally, we give the promised proofs of Propositions~\ref{P:convl gi}, \ref{P:jacobian bounds}, \ref{P:one to one}, and~\ref{P:convl exp gi}.  

We begin with Proposition~\ref{P:convl gi}.  Recalling that $L_\gamma^j := L_{(\gamma_1,\ldots,\gamma_j)}$ for $1 \leq j \leq d$ and that $L_\gamma^0\equiv L_\gamma^{-1} \equiv 1$, we define
$$
A_\gamma^k := \frac{L_\gamma^{d-k-1}L_\gamma^{d-k+1}}{(L_\gamma^{d-k})^2}.
$$
We note that the quantities whose almost log convexity and monotonicity we had assumed are
\begin{equation} \label{E:A to log guy}
\frac{L_\gamma^d (L_\gamma^{d-k-1})^k}{(L_\gamma^{d-k})^{k+1}} = \prod_{j=1}^k (A_\gamma^j)^j.
\end{equation}

It was proved in \cite{spyrosthesis} (see also \cite{DW}) that if we define $J_\gamma^1 := A_\gamma^1$ and, for $2\le k \le d$,
\begin{equation} \label{E:Jk}
J_\gamma^k(t_1,\ldots,t_k) := \prod_{i=1}^k A_\gamma^k(t_i) \int_{t_1}^{t_2} \cdots \int_{t_{k-1}}^{t_k} J_\gamma^{k-1}(s_1,\ldots,s_{k-1}) \, ds_{k-1} \cdots ds_1,
\end{equation}
then if each $L_\gamma^j$ is non-vanishing on some interval $I$ and if $(t_1,\ldots,t_d) \in I^d$, we have the identity 
\begin{equation} \label{E:Jd is J}
J_\gamma^d(t_1,\ldots,t_d) = \det(\gamma'(t_1), \ldots,\gamma'(t_d)).
\end{equation}
(The result is only claimed for polynomial curves, but the proof under our hypotheses is unchanged.) 

We want to establish the geometric inequality 
$$
|J_\gamma^d(t_1,\ldots,t_d)| \gtrsim \prod_{j=1}^d |L_\gamma^d(t_j)|^{\frac1d}\prod_{i<j}|t_j-t_i|;
$$
this is just the case $k=d$ of the following lemma.  

\begin{lemma} \label{L:convl gi} 
Under the hypotheses of Proposition~\ref{P:convl gi}, if $1 \leq k \leq d$ and $(t_1,\ldots,t_k) \linebreak \in I^k$ satisfies $t_1 < \cdots < t_k$, then
\begin{equation} \label{E:convl gi k}
|J_\gamma^k(t_1,\ldots,t_k)| \gtrsim \prod_{i=1}^k \prod_{j=1}^k |A_\gamma^j(t_i)|^{\frac jk} \prod_{1 \leq i < j \leq k} |t_j-t_i|.
\end{equation}
\end{lemma}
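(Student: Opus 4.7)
The plan is to prove Lemma~\ref{L:convl gi} by induction on $k$; the case $k=d$ then gives \eqref{E:convl gi} via the identity \eqref{E:Jd is J}. The base case $k=1$ is trivial, since $J_\gamma^1 = A_\gamma^1$. For the inductive step, I would apply the recursion \eqref{E:Jk}. A preliminary point: since each $A_\gamma^j$ is continuous and nonvanishing on $I$, it has constant sign, and an induction on the recursion shows that $J_\gamma^{k-1}$ has constant sign on the simplex $\{s_1<\cdots<s_{k-1}\}$; hence absolute values pass under the integral in \eqref{E:Jk}. Writing
$$
F_n(t) := \prod_{j=1}^n |A_\gamma^j(t)|^j
$$
(so that $F_n$ is precisely the quantity appearing in \eqref{E:A to log guy}, assumed $C$-almost monotone and $M$-almost log-concave) and inserting the inductive bound for $|J_\gamma^{k-1}|$ produces
$$
|J_\gamma^k(t)| \gtrsim \prod_{i=1}^k |A_\gamma^k(t_i)| \int_{t_1}^{t_2}\!\!\cdots\!\!\int_{t_{k-1}}^{t_k} \prod_{i=1}^{k-1} F_{k-1}(s_i)^{\tfrac1{k-1}} \prod_{\ell<m}(s_m-s_\ell)\,ds.
$$
Since $|A_\gamma^k|^k = F_k/F_{k-1}$ and the target right-hand side of \eqref{E:convl gi k} is $\prod_i F_k(t_i)^{1/k} \prod_{i<j}(t_j-t_i)$, it suffices to prove
$$
\int_{t_1}^{t_2}\!\!\cdots\!\!\int_{t_{k-1}}^{t_k} \prod_{i=1}^{k-1} F_{k-1}(s_i)^{\tfrac1{k-1}} \prod_{\ell<m}(s_m-s_\ell)\,ds \gtrsim \prod_{i=1}^k F_{k-1}(t_i)^{\tfrac1k} \prod_{1\le i<j\le k}(t_j-t_i).
$$

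The key observation is that anchoring at the specific points $s_i^* := (1-i/k)t_i + (i/k)t_{i+1}$ causes the log-concavity inequalities to redistribute the $k-1$ weights $1/(k-1)$ coming from the integrand into exactly $k$ weights $1/k$, one on each $F_{k-1}(t_j)$. Indeed, by almost log-concavity of $F_{k-1}$ — upgraded from the midpoint formulation in Definition~\ref{D: almost log-concave} to arbitrary convex combinations with a rational denominator $k$ by iterated midpoint estimates together with almost monotonicity, at the cost of constants depending on $k$, $M$, $C$ — one has
$$
F_{k-1}(s_i^*)^{\tfrac1{k-1}} \gtrsim F_{k-1}(t_i)^{\tfrac{1-i/k}{k-1}} F_{k-1}(t_{i+1})^{\tfrac{i/k}{k-1}},
$$
and collecting the exponent of each $F_{k-1}(t_j)$ in the product over $i$ yields precisely $1/k$: for $j=1$ it is $(1-1/k)/(k-1) = 1/k$; for $j=k$ it is $((k-1)/k)/(k-1) = 1/k$; for $2\le j\le k-1$ one gets $((j-1)/k + (k-j)/k)/(k-1) = 1/k$. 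I would then restrict the integration to $\Omega := \prod_{i=1}^{k-1}[s_i^* - c_k(t_{i+1}-t_i),\, s_i^* + c_k(t_{i+1}-t_i)]$ for $c_k$ sufficiently small, so that on $\Omega$ this log-concavity bound persists (up to constants) and the Vandermonde factor satisfies $\prod_{\ell<m}(s_m-s_\ell) \sim_k \prod_{\ell<m}(s_m^*-s_\ell^*)$. Since $|\Omega|\sim_k \prod_\ell(t_{\ell+1}-t_\ell)$, the proof concludes by the elementary polynomial identity
$$
\Bigl(\prod_{\ell<m}(s_m^*-s_\ell^*)\Bigr)\prod_\ell(t_{\ell+1}-t_\ell)\sim_k \prod_{1\le i<j\le k}(t_j-t_i),
$$
which can be checked directly or by comparison with the Selberg-type evaluation $\int \prod(s_m-s_\ell)\,ds = \tilde c_k \prod_{i<j}(t_j-t_i)$ on the full simplex.

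The main obstacle is the design of the anchor points $s_i^*$. The linear law $\theta_i = i/k$ is forced by the balancing condition that the double sum $\theta_{j-1} + (1-\theta_j)$ should be constant in $j$, and any naive choice (e.g.\ $\theta_i = 1/2$) distributes the weight over only $k-1$ of the $t_j$ with unequal exponents, falling short of the required uniform weight $1/k$ on all $k$ endpoints. The almost monotonicity hypothesis plays an essential but secondary role: it is precisely what allows the passage from the dyadic midpoint log-concavity of the definition to the bound at the non-dyadic convex combination $s_i^*$ with the rational weight $i/k$.
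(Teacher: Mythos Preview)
Your approach is essentially the paper's: the anchor points $s_i^* = (1-i/k)t_i + (i/k)t_{i+1}$ are exactly the paper's $t_i'$, the redistribution of exponents via log-concavity to produce the uniform weight $1/k$ on each $F_{k-1}(t_j)$ is the same computation, and your final Vandermonde estimate $\prod_{\ell<m}(s_m^*-s_\ell^*)\cdot\prod_\ell(t_{\ell+1}-t_\ell)\sim_k\prod_{i<j}(t_j-t_i)$ is correct --- indeed one checks directly that $\tfrac1k(t_{m+1}-t_\ell)\le s_m^*-s_\ell^*\le t_{m+1}-t_\ell$, and the adjacent gaps supply the missing consecutive differences, which is precisely how the paper assembles the full Vandermonde.

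There is, however, one genuine gap: the restriction to the \emph{symmetric} box $\Omega=\prod_i[s_i^*-c_k\delta_i,\,s_i^*+c_k\delta_i]$ does not guarantee that the log-concavity bound persists uniformly. Writing $s_i=(1-\theta_i)t_i+\theta_i t_{i+1}$ with $\theta_i=i/k+\epsilon_i$ and $|\epsilon_i|\le c_k$, almost log-concavity gives
\[
F_{k-1}(s_i)\gtrsim F_{k-1}(t_i)^{1-\theta_i}F_{k-1}(t_{i+1})^{\theta_i}
= F_{k-1}(t_i)^{1-i/k}F_{k-1}(t_{i+1})^{i/k}\cdot\Bigl(\tfrac{F_{k-1}(t_{i+1})}{F_{k-1}(t_i)}\Bigr)^{\epsilon_i}.
\]
When $F_{k-1}$ is almost increasing and $\epsilon_i<0$ (i.e.\ $s_i<s_i^*$), the last factor can be arbitrarily small, since the ratio $F_{k-1}(t_{i+1})/F_{k-1}(t_i)$ is not bounded uniformly over the class of curves. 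No choice of $c_k>0$ repairs this. The fix is immediate and is exactly what the paper does: after reparametrizing so that $F_{k-1}$ is almost increasing, restrict instead to a \emph{one-sided} box $\prod_i[s_i^*,\,s_i^*+c_k\delta_i]$ (the paper's intervals $[t_i',t_{i+1}'']$). There $s_i\ge s_i^*$, so almost monotonicity gives $F_{k-1}(s_i)\gtrsim F_{k-1}(s_i^*)$ directly, and one applies log-concavity once at the anchor $s_i^*$ with the exact weight $i/k$. The rest of your argument then goes through unchanged.
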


\begin{proof}[Proof of Lemma~\ref{L:convl gi}] 
Multiplying appropriate coordinates of $\gamma$ by $-1$ if necessary, we may assume that the $L^k_\gamma$, and hence the $A^k_\gamma$, are all positive on $I$.  Since $t_1 < \cdots < t_k$, it is easy to check that the variables in the integrand of \eqref{E:Jk} are also ordered:  $s_1 < \cdots < s_{k-1}$, and similarly for the dummy variables used in defining all previous $J_\gamma^j$.  From this and positivity of the $A_\gamma^k$, it follows that the integrand in the definition of $J_\gamma^j$ for $j \leq k$ is non-negative on the domain of integration.

The lemma is trivial when $k=1$.  Let us assume that the estimate for $J_\gamma^{k-1}$ is valid.  Reparametrizing ($t \mapsto -t$) if necessary and applying our almost monotonicity hypothesis if needed, we may assume that 
\begin{equation} \label{E:almost increasing}
\frac{L_\gamma^d (L_\gamma^{d-k})^{k-1}}{(L_\gamma^{d-k+1})^k}(t_1) \leq C \frac{L_\gamma^d (L_\gamma^{d-k})^{k-1}}{(L_\gamma^{d-k+1})^k}(t_2), \:\: \text{for all} \:\: t_1 \leq t_2.
\end{equation}
By the induction hypothesis and the positivity remarked above,
\begin{align*}
&J_\gamma^k(t_1,\ldots,t_k) = \prod_{j=1}^k A_\gamma^k(t_j) \int_{t_1}^{t_2} \cdots \int_{t_{k-1}}^{t_k} J_\gamma^{k-1}(s_1,\ldots,s_{k-1}) \, ds_{k-1} \cdots ds_1 \\
&\quad \gtrsim \prod_{j=1}^k A_\gamma^k(t_j) \int_{t_1}^{t_2} \cdots \int_{t_{k-1}}^{t_k} \prod_{i=1}^{k-1}\prod_{j=1}^{k-1} A_\gamma^j(s_i)^{\frac j{k-1}} \prod_{1 \leq i < j \leq k-1} (s_j-s_i) \, ds_{k-1} \cdots ds_1.
\end{align*}

Let $B_\gamma^{k-1}(s) := \prod_{j=1}^{k-1} A_\gamma^j(s)^{\frac j{k-1}}$.  By the computation in the previous paragraph and positivity of the integrand,
\begin{align} \label{E:Jgammak B}
&J_\gamma^k(t_1,\ldots,t_k)\\\notag
&\quad  \gtrsim \prod_{j=1}^k A_\gamma^k(t_j) \int_{t_1'}^{t_2''}\int_{t_2'}^{t_3''} \cdots \int_{t_{k-1}'}^{t_k''} \prod_{i=1}^{k-1}B_\gamma^{k-1}(s_i) \prod_{1 \leq i < j \leq k-1} (s_j-s_i) \, ds_{k-1} \cdots ds_1,
\end{align}
where 
$$
t_j' = \tfrac{k-j}k t_j + \tfrac jk t_{j+1}, \quad 1 \leq j \leq k-1, \qquad t_j'' = \tfrac12 (t_{j-1}'+t_j), \qquad 2 \leq j \leq k.
$$

By \eqref{E:A to log guy} and our parametrization, we know that $B_\gamma^{k-1}$ is almost increasing and almost log-concave.  Using these facts, it can be shown (see \cite[Proof of Lemma 2.1]{OberlinJFA})  that
$$
B_\gamma^{k-1}(\theta s_1 + (1-\theta)s_2) \gtrsim B_\gamma^{k-1}(s_1)^\theta B_\gamma^{k-1}(s_2)^{1-\theta}, \quad s_1,s_2 \in I,
$$
for all $\theta$ of the form $\frac{k-j}{k}$, $1\le j\le k-1$. Therefore on the domain of integration in \eqref{E:Jgammak B},
$$
\prod_{i=1}^{k-1} B_\gamma^{k-1}(s_i) \gtrsim \prod_{i=1}^{k-1}B_\gamma^{k-1}(t_i') \gtrsim \prod_{i=1}^{k-1} B_\gamma^{k-1}(t_i)^{\frac{k-i}k} B_\gamma^{k-1}(t_{i+1})^{\frac ik} = \prod_{i=1}^k B_\gamma^{k-1}(t_i)^{\frac{k-1}k}
$$
and if $1 \leq i < j \leq k-1$ (since $t_1 \leq t_2 \leq \cdots \leq t_k$),
\begin{align*}
s_j-s_i &= (s_j-t_j) + (t_j-t_{i+1}) + (t_{i+1}-s_i) \\
&\gtrsim (t_{j+1}-t_j) + (t_j-t_{i+1}) + (t_{i+1}-t_i) = t_{j+1}-t_i.
\end{align*}
Thus
\begin{align*}
J_\gamma^k(t_1,\ldots,t_k) &\gtrsim \prod_{j=1}^k A_\gamma^k(t_j) B_\gamma^{k-1}(t_j)^{\frac{k-1}k} \prod_{1 \leq i < j-1 \leq k-1} (t_j-t_i) \prod_{2 \leq j \leq k} (t_j-t_{j-1}) \\
&= \prod_{i=1}^k \prod_{j=1}^k A_\gamma^j(t_i)^{\frac jk} \prod_{1 \leq i < j \leq k} (t_j-t_i).
\end{align*}
This completes the proof of the lemma and thereby Proposition~\ref{P:convl gi}.
\end{proof}

Now we turn to Proposition~\ref{P:jacobian bounds}.  Recall that we assume that $\gamma:I \to \R^d$ is a $C^d$ curve and that the geometric inequality relating $J_\gamma$ and $L_\gamma$ holds.  We want to prove that the geometric inequalities (\ref{E:Psi even}-\ref{E:Psi odd}) hold.  These estimates were proved in the polynomial case in \cite{DSjfa}, but certain aspects of that proof do not readily generalize.  We give the details for \eqref{E:Psi even} only, which we restate for the convenience of the reader.  Assume that $d+1=2D$ and recall that 
$$
\Psi^{2D}_{(t_0,y_0)}(s_1,t_1,\dots,s_K,t_K)  = \bigl(t_K,y_0 + \sum_{j=1}^Ks_j(\gamma(t_{j-1}) - \gamma(t_j))\bigr), 
$$
and that \eqref{E:Psi even} is the inequality
\begin{align*}
&|\det\bigl(D\Psi^{d+1}_{(t_0,y_0)}(s_1,t_1,\dots,s_D,t_D)\bigr)| \\ \notag
&\qquad  \gtrsim \prod_{i=1}^{D-1}\bigl\{|s_{i+1}-s_i||L_\gamma(t_i)|^{\frac2{d+1}}\prod_{\stackrel{0 \leq j \leq D}{j\neq i}}|t_j-t_i|^2\bigr\} |L_\gamma(t_0)|^{\frac1{d+1}}|L_\gamma(t_D)|^{\frac1{d+1}}|t_D-t_0|.
\end{align*}

\begin{proof}[Proof of \eqref{E:Psi even} of Proposition~\ref{P:jacobian bounds}]
The identity 
\begin{align} \label{E:DPsi even}
& \det\bigl(D\Psi^{d+1}_{(t_0,y_0)}(t_1,s_1,\dots,t_D,s_D)\bigr) = \\\notag
&\qquad \pm \bigl\{\prod_{i=1}^{D-1}(s_{i+1}-s_i)\bigr\}\bigl\{\prod_{j=D+1}^{2D-1}\partial_j|_{t_j = t_{j-D}}\bigr\} \det\left(\begin{array}{ccc} 1 & \cdots & 1 \\ \gamma(t_0) & \cdots & \gamma(t_{2D-1}) \end{array} \right)
\end{align}
may be proved using Gaussian elimination; for details, see Lemma~4.3 of \cite{DSjfa}.  We will ignore the $s_j$ for the remainder of the argument.  

Using elementary matrix manipulations, we may write
\begin{align} \notag
&\det\left(\begin{array}{ccc} 1 & \cdots & 1\\ \gamma(t_0) & \cdots & \gamma(t_{2D-1})\end{array}\right)
= \prod_{j=D+1}^{2D-1} (t_j-t_{j-D})\\\notag
&\qquad\qquad \times \det \left(\begin{array}{cccccc} 1 & \cdots & 1& 0 & \cdots &0\\ \gamma(t_0) & \cdots & \gamma(t_0) & \delta(t_1,t_{D+1}) & \cdots & \delta(t_{D-1},t_{2D-1}) \end{array}\right)\\\label{E:rewrite 1gamma}
&\qquad =: \prod_{j=D+1}^{2D-1}(t_j-t_{j-D}) F(t_0,\ldots,t_{2D-1}),
\end{align}
where
$$
\delta(s,t) := \begin{cases} \tfrac1{t-s}(\gamma(t)-\gamma(s)), \quad &t\neq s\\ \gamma'(s), \quad &\text{t=s}.\end{cases}
$$
Since $d \geq 2$ and $\gamma \in C^d$, $\delta \in C^1$, which implies that $F\in C^1$ as well.  

We use the product rule and apply the derivatives in \eqref{E:DPsi even} to the right side of \eqref{E:rewrite 1gamma}, but the contribution from any term in which the derivative falls on $F$ is zero.  Thus
$$
\bigl\{\prod_{j=D+1}^{2D-1}\partial_j|_{t_j = t_{j-D}}\bigr\} \det\left(\begin{array}{ccc} 1 & \cdots & 1 \\ \gamma(t_0) & \cdots & \gamma(t_{2D-1}) \end{array} \right) = F(t_0,\ldots,t_D,t_1,\ldots,t_{D-1}).
$$
On the set where $t_j \neq t_{j-D}$, for all $D+1 \leq j \leq 2D-1$, by our assumption that \eqref{E:convl GI} holds,
$$
|F(t_0,\ldots,t_{2D-1})| \gtrsim \prod_{i=0}^{2D-1} |L_\gamma(t_j)|^{\frac1{2D}} \frac{\prod_{0 \leq i < j \leq 2D-1}|t_j-t_i|}{\prod_{j=D+1}^{2D-1}|t_j-t_{j-D}|},
$$
so \eqref{E:Psi even} follows from the continuity of both sides of this inequality and a careful accounting of the $t_i$.  
\end{proof}

Next we prove Proposition~\ref{P:one to one}, which asserts the near-injectivity of the iteration maps for the restricted X-ray transform under the hypothesis that $J_\gamma(t_1,\ldots,t_d)$ is nonzero whenever the $t_i$ are distinct.

\begin{proof}[Proof of Proposition~\ref{P:one to one}]  The argument is very much inspired by an argument in \cite{DrM2}.  We will give the details for $\Phi^{d+1}_{(s_0,x_0)}$ in the case $d+1=2D$.  Recalling \eqref{E:Phi even} and fixing $(\tau,\xi) \in \R^{1+d}$, we estimate
\begin{equation} \label{E:card tilde}
\# \{(t,s) \in \Delta : \Phi^{d+1}_{(s_0,x_0)}(t,x) = (\tau,\xi)\} \leq \#\{(t,u)\in \tilde\Delta : \tilde\Phi(t,u) = (\tau,\xi)\},
\end{equation}
where 
\begin{gather*}
\tilde\Phi(t,u) := \sum_{j=1}^D u_j\gamma(t_j), \\
 \tilde\Delta = \{(t,u) \in I^D\times \R^D : t_i \neq t_j \: \forall \: i\neq j, \: u_i \neq 0\:  \forall\: i, \: \sum_{i=1}^D u_i = \tau-s_0\}.
\end{gather*}

If the cardinality of the right side of \eqref{E:card tilde} is greater than $D!$, there exist distinct points $(t,u), (t',u') \in I^D\times \R^D$ such that $\tilde\Phi(t,u) = \tilde\Phi(t',u')$, $t_1 < \cdots < t_D$, $t_1' < \cdots < t_D'$, $u_i,u_i' \neq 0$ for all $i$, and $\sum u_i = \sum u_i'$.  By collecting like terms, we may rewrite the equation $\tilde\Phi(t,u) = \tilde\Phi(t',u')$ as
$$
\sum_{j=1}^\ell v_j\tilde\Phi(w_j) = 0,
$$
for some $2 \leq \ell \leq 2D = d+1$,  $\{w_1,\ldots,w_\ell\} \subset \{t_1,\ldots,t_D,t_1',\ldots,t_D'\}$ with $w_1 < \cdots < w_\ell$, and nonzero $v_j$ satisfying $\sum_j v_j = 0$.  

By inserting extra points between $w_1$ and $w_2$, allowing some of the $v_j$ to be zero, and relabeling if needed, 
$$
\sum_{j=1}^{d+1} v_j \tilde\Phi(w_j) = 0,
$$
for some $w_1 < \cdots < w_d$ in $I$ and $v_j \in \R$, not all zero, with $\sum_j v_j = 0$.  Let $\alpha_k = \sum_{j=1}^k v_j$, $1 \leq j \leq d$; then
$$
\sum_{j=1}^d \alpha_j (\gamma(w_j)-\gamma(w_{j+1})) = 0.
$$
By our assumption on the $v_j$, the $\alpha_j$ are not all zero, so 
$$
\{\gamma(w_j)-\gamma(w_{j+1})\}_{j=1}^d
$$
is linearly dependent.  Combining this with the fundamental theorem of calculus and multilinearity of the determinant, 
\begin{align*}
0 &= \det\Big(\int_{w_1}^{w_2} \gamma'(t_1)\, dt_1, \ldots, \int_{w_d}^{w_{d-1}} \gamma'(t_d)\, dt_d\Big)\\
&= \int_{w_1}^{w_2} \cdots \int_{w_d}^{w_{d+1}} \det(\gamma'(t_1),\ldots,\gamma'(t_d))\, dt_1\, \cdots \, dt_d.
\end{align*}
But since $t_1 < \cdots < t_d$ on the domain of integration, our assumption on $J_\gamma$ implies that the right side is nonzero, a contradiction.  Tracing back, this implies that $\Phi^{d+1}_{(s_0,x_0)}$ has the claimed almost-injectivity.  

We leave the details in the remaining cases to the interested reader.  
\end{proof}

Finally, we prove Proposition~\ref{P:convl exp gi}, which asserts that the estimates
\begin{gather*}
| L_\Gamma(t)| \sim e^{-t\sum_{j=1}^d a_i} \prod_{j=1}^d|a_j \Theta_j(\infty)| \prod_{1 \leq i < j \leq d}(a_j-a_i)\\
|J_\Gamma(t_1,\ldots,t_d)| \gtrsim \prod_{i=1}^d |L_\Gamma(t_i)|^{\frac1d}\prod_{1 \leq i < j \leq d} (|t_i-t_j|e^{c_d(a_d-a_1)|t_i-t_j|}).
\end{gather*}
hold on $[\tau,\infty)$ and $[\tau,\infty)^d$, respectively, for some sufficiently large $\tau>0$, provided
$$
\Gamma(t) = (e^{-a_1t}\Theta_1(t),\ldots,e^{-a_dt}\Theta_d(t))
$$
for nonzero real numbers $a_1 < \cdots < a_d$ and $\Theta_i \in C^d_{\rm{loc}}((\tau,\infty)$ with $\Theta_i(\infty) := \lim_{t \nearrow\infty} \Theta_i(t) \neq 0$ and $\lim_{t \nearrow \infty} \Theta_i^{(m)}(t) = 0$, $1 \leq i, m \leq d$.

\begin{proof}[Proof of Proposition~\ref{P:convl exp gi}]  To avoid repetition from \cite{DMtams}, we merely sketch the details for arguments that already appear there.  Using a continuity argument and \eqref{E:thetas}, we may choose $\tau$ sufficiently large that the $L_\Gamma^k$ do not change sign on $(\tau,\infty)$ and satisfy
$$
| L_\Gamma^k(t)| \sim e^{-t\sum_{j=1}^k a_j}\prod_{j=1}^k |a_j\Theta_j(\infty)| \prod_{1 \leq i < j \leq k}(a_j-a_i), \quad 1 \leq k \leq d.
$$
Applying an affine transformation, we may assume that the $L_\Gamma^k$ are all positive on $(\tau,\infty)$.  Thus the $A_\gamma^k$ satisfy
$$
A_\gamma^k(t) \sim C_{\Gamma,k} \begin{cases} e^{-t(a_{d-k+1}-a_{d-k})}, \quad &1 \leq k \leq d-1,\\ e^{-ta_1}, \quad &k=d,\end{cases}
$$
where 
$$
C_{\Gamma,k} = \begin{cases} \frac{a_{d-k+1}\Theta_{d-k+1}(\infty) \prod_{i=1}^{d-k}(a_{d-k+1}-a_i)}{a_{d-k}\Theta_{d-k}(\infty)\prod_{j=1}^{d-k-1}(a_{d-k}-a_j)}, \quad & 1 \leq k \leq d-1, \\ a_1\Theta_1(\infty), \quad & k=d. \end{cases}
$$

For $1 \leq k \leq d$ and $\tau < t_1 < \cdots < t_k$, define
$$
I^k(t_1,\ldots,t_k) = \begin{cases} 
e^{-t_1(a_d-a_{d-1})}, \quad &k=1\\
e^{-(t_1+\cdots+t_k)(a_{d-k+1}-a_{d-k})}\\
\qquad \times \int_{t_1}^{t_2} \cdots \int_{t_{k-1}}^{t_k}I^{k-1}(s_1,\ldots,s_{k-1})\, ds, \quad & 2 \leq k \leq d-1\\
e^{-(t_1+\cdots+t_d)a_1}\int_{t_1}^{t_2} \cdots \int_{t_{d-1}}^{t_d}I^{d-1}(s_1,\ldots,s_{d-1})\, ds, &k=d.
\end{cases}
$$
Then by \eqref{E:Jk}, if $\tau < t_1 < \cdots < t_d$,
$$
|J_\Gamma^d(t_1,\ldots,t_d)| \sim \bigl(\prod_{j=1}^d C_{\Gamma,j}^j\bigr)I^d(t_1,\ldots,t_d),
$$
and since
$$
\prod_{j=1}^d C_{\Gamma,j}^j = \prod_{j=1}^k a_j \Theta_j(\infty)\prod_{1 \leq i < j \leq k}(a_j-a_i),
$$
to prove \eqref{E:convl exp gi}, it suffices to show that whenever $\tau < t_1 < \cdots < t_d$,
$$
I^d(t_1,\ldots,t_d) \gtrsim \prod_{i=1}^d e^{-\frac1d (t_1+\cdots+t_d)(a_1+\cdots+a_d)}e^{c_d(a_d-a_1)(t_d-t_1)}\prod_{1 \leq i < j \leq d}(t_j-t_i).
$$

This is just the case $k=d$ of the following.
\begin{lemma}\label{L:exp gi lemma}
If $1 \leq k \leq d$, and $\tau < t_1 < \ldots < t_k$,
$$
I^k(t_1,\ldots,t_k) \gtrsim_k 
\begin{cases}
e^{-(\frac1k(a_d+\cdots+a_{d-k+1}) - a_{d-k})(t_1+\cdots+t_k)} \prod_{1 \leq i < j \leq k}(t_j-t_i), \ \ &k< d\\
e^{-\frac1d(a_1+\cdots+a_d)(t_1+\cdots+t_d)}e^{c_d(a_d-a_1)(t_d-t_1)}\\
\qquad \times\prod_{1 \leq i < j \leq d}(t_j-t_i), \quad &k=d.
\end{cases}
$$
\end{lemma}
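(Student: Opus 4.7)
The plan is to proceed by induction on $k$. The base case $k=1$ is immediate: $I^1(t_1)=e^{-t_1(a_d-a_{d-1})}$ exactly matches the claimed form (empty Vandermonde, no exponential gain at this level). For the inductive step I would substitute the recursive definition of $I^k$ and apply the hypothesis to the integrand, reducing the problem to a lower bound for the weighted Vandermonde integral
\[
\int_{t_1}^{t_2}\cdots\int_{t_{k-1}}^{t_k}e^{-\mu_{k-1}(s_1+\cdots+s_{k-1})}\prod_{1\le i<j\le k-1}(s_j-s_i)\,ds.
\]
A short arithmetic check verifies that once this integral is bounded below, the target exponent for $I^k$ is produced correctly, so the lemma reduces to an integration estimate.

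For the generic case $k\le d-1$ (no gain required), I would restrict to the middle-third sub-box $s_i\in[t_i+\tfrac13\Delta_i,\,t_i+\tfrac23\Delta_i]$ with $\Delta_i=t_{i+1}-t_i$. On this sub-box the exponential $e^{-\mu_{k-1}\sum s_i}$ is pointwise controlled by a harmless constant times its value at the midpoint, and $\prod_{i<j\le k-1}(s_j-s_i)\gtrsim \prod_{i<j\le k-1}(t_j-t_i)$. The volume factor $\prod\Delta_i$ combined with the interior Vandermonde reconstructs $\prod_{1\le i<j\le k}(t_j-t_i)$ via the classical integration identity $\int_{t_1}^{t_2}\cdots\int_{t_{k-1}}^{t_k}\prod(s_j-s_i)\,ds\sim_k\prod_{i<j\le k}(t_j-t_i)$, which is easily verified by induction.

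The critical case is $k=d$, which must deliver the extra exponential factor $e^{c_d(a_d-a_1)(t_d-t_1)}$. The sub-box strategy alone cannot supply this gain; instead I would exploit the full range of integration together with the elementary identity
\[
\int_{t_i}^{t_{i+1}}e^{-\nu s}\,ds=\frac{2}{\nu}\sinh\!\Bigl(\tfrac{\nu\Delta_i}{2}\Bigr)e^{-\nu(t_i+t_{i+1})/2},
\]
whose left-hand side exceeds the midpoint estimate $\Delta_ie^{-\nu(t_i+t_{i+1})/2}$ by the ratio $2\sinh(\nu\Delta_i/2)/(\nu\Delta_i)$, which grows like $e^{|\nu|\Delta_i/2}/(|\nu|\Delta_i)$ in the large regime. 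This mechanism is already transparent at $d=2$, where one computes the closed form $I^2(t_1,t_2)=2(a_2-a_1)^{-1}\sinh\bigl((a_2-a_1)(t_2-t_1)/2\bigr)e^{-(a_1+a_2)(t_1+t_2)/2}$ and reads off the claim with any $c_2<\tfrac12$. For general $d$ I expect that iterating this computation identifies $|I^d|$ (up to sign) with a generalized Vandermonde determinant $\det(e^{-a_it_j})/\prod_{i<j}(a_j-a_i)$, whose leading term has exponent $-\sum_i a_it_{d+1-i}$; comparison of this exponent with the target $-\tfrac{1}{d}(a_1+\cdots+a_d)\sum t_i$ via Abel summation of the reverse-rearrangement sum $\sum_j(\bar a-a_{d+1-j})t_j\ge 0$ (with $\bar a=\tfrac1d\sum a_i$) then supplies the claimed gain in the large regime, while the Vandermonde bound from the generic case covers the small regime.

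The principal obstacle is matching these two lower bounds across regimes: in the intermediate range $(a_d-a_1)(t_d-t_1)\sim 1$, one must verify that the larger of the two bounds still dominates the target. This is transparent for $d=2$ (where $2\sinh(x)\ge\max(x,\tfrac12 e^x)$ cleanly separates the regimes), but the combinatorial bookkeeping in higher dimensions---in particular, ensuring that the per-coordinate $\sinh$ gains combine into a single factor involving $(a_d-a_1)(t_d-t_1)$ rather than the weaker $(a_d-a_1)\min_i\Delta_i$---is the technical heart of the proof and will require careful tracking of dominant determinantal terms at every level of the iteration.
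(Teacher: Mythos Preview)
Your induction setup and the treatment of $k<d$ are essentially what the paper does (it cites \cite{DMtams} for this case, and your sub-box argument is the content of Proposition~\ref{P:convl gi}).

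For $k=d$, your instinct to use the $\sinh$ identity is correct and is in fact the mechanism the paper uses, but you have manufactured an obstacle that does not exist.  The paper neither splits into small/large regimes nor passes through the generalized Vandermonde determinant $\det(e^{-a_it_j})$.  Instead it restricts to a sub-box centered at the convex combinations $m_j=\tfrac{d-j}{d}t_j+\tfrac{j}{d}t_{j+1}$ (chosen so that $\sum_j m_j=\tfrac{d-1}{d}\sum_j t_j$, which produces the correct exponent), bounds the off-diagonal Vandermonde factors $\prod_{i<j-1}(s_j-s_i)$ pointwise below on this sub-box, and then integrates the pure exponential exactly in each coordinate.  The single ingredient you are missing is the \emph{uniform} inequality
\[
|e^t-e^{-t}|\ \ge\ |t|\,(1-\theta)\,e^{\theta|t|},\qquad 0\le\theta<1,
\]
valid for all $t\in\R$, which delivers simultaneously the linear factor $(t_j-t_{j-1})$ needed for the Vandermonde \emph{and} an exponential gain, with no case analysis.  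Since the sub-box widths satisfy $\sum_j(t_j'-t_{j-1}'')\gtrsim_d t_d-t_1$, the per-coordinate gains multiply---not minimize---to give $\prod_j e^{\theta B(t_j'-t_{j-1}'')/(2(d-1))}\ge e^{c_dB(t_d-t_1)}\ge e^{c_d(a_d-a_1)(t_d-t_1)}$, so your worry about $\min_i\Delta_i$ never arises.

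Your determinantal route is a valid identity (for the model $I^k$, not the full $J_\Gamma^k$), but extracting a uniform lower bound with both the polynomial Vandermonde and the exponential gain from $\det(e^{-a_it_j})$ is genuinely harder than the direct argument above; the paper avoids it entirely.
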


\begin{proof}[Proof of Lemma~\ref{L:exp gi lemma}]
When $1 \leq k < d$, this is shown in \cite{DMtams} (it also follows from the proof of Proposition~\ref{P:convl gi}), so we only prove the lower bound on $I^d$.  

Let $B = a_d+\cdots+a_2-(d-1)a_1$.  Since the $a_j$ are increasing, $B \geq a_d-a_1$, so by some arithmetic, the lower bound on $I^d$ will follow from
\begin{equation}\label{E:claimed}
\begin{aligned}
\int_{t_1}^{t_2}\cdots\int_{t_{d-1}}^{t_d}& e^{-\frac1{d-1}B(s_1+\cdots+s_{d-1})}\prod_{1 \leq i < j \leq d-1} (s_j-s_i)\, ds\\
&\qquad \gtrsim e^{-\frac1d B(t_1+\cdots+t_d)}e^{c_dB(t_d-t_1)}\prod_{1 \leq i < j \leq d}(t_j-t_i).
\end{aligned}
\end{equation}
For $1 \leq j \leq d-1$, define
$$
m_j = \tfrac{d-j}d t_j + \tfrac jd t_{j+1}, \qquad t_{j+1}' = m_j+\tfrac{(t_{j+1}-t_j)}{2d}, \qquad t_j'' = m_j - \tfrac{(t_{j+1}-t_j)}{2d}.
$$
Then for each $j$, $t_j < t_j'' < t_{j+1}' < t_{j+1}$ and $\tfrac12(t_j''+t_{j+1}') = m_j$.  Therefore 
\begin{align*}
\int_{t_1}^{t_2}\cdots\int_{t_{d-1}}^{t_d} &e^{-\frac1{d-1}B(s_1+\cdots+s_{d-1})}\prod_{1 \leq i < j \leq d-1} (s_j-s_i)\, ds \\
&\qquad \geq \int_{t_1''}^{t_2'}\cdots\int_{t_{d-1}''}^{t_d'} e^{-\frac1{d-1}B(s_1+\cdots+s_{d-1})}\prod_{1 \leq i < j \leq d-1}(s_j-s_i)\, ds\\
&\qquad \sim \prod_{1 \leq i < j \leq d-1}(t_{j+1}-t_i)\prod_{j=2}^d\tfrac{d-1}{B}(e^{-\frac B{d-1}t_{j-1}''}-e^{-\frac B{d-1}t_j'})\\
&\qquad = e^{-\frac B{d-1}(m_1+\cdots+m_{d-1})}\prod_{1 \leq i < j \leq d-1}(t_{j+1}-t_i)\\
&\qquad \qquad \times \prod_{j=2}^d \tfrac{d-1}B(e^{-\frac B{2(d-1)}(t_{j-1}''-t_j')}-e^{-\frac B{2(d-1)}(t_j'-t_{j-1}'')}).
\end{align*}
We turn now to this last term.  It is elementary to show that for any $t \in \R$ and $0 \leq \theta < 1$, $|e^t-e^{-t}| \geq |t|(1-\theta)e^{\theta|t|}$, and furthermore 
$$
(t_d'-t_{d-1}'')+(t_{d-1}'-t_{d-2}'') + \cdots + (t_2'-t_1'') \gtrsim_d t_d-t_1.
$$
Therefore
$$
\prod_{j=2}^d \tfrac{d-1}B(e^{-\frac B{2(d-1)}(t_{j-1}''-t_j')}-e^{-\frac B{2(d-1)}(t_j'-t_{j-1}'')}) \gtrsim_d e^{c_d B(t_d-t_1)}\prod_{j=2}^d (t_j-t_{j-1}).
$$
The claimed estimate \eqref{E:claimed} follows.  
\end{proof}
This completes the proof of Proposition~\ref{P:convl exp gi}.
\end{proof}



\end{document}